\newtheorem{thm}{Theorem}[section]
\newtheorem{lem}[thm]{Lemma}
\newtheorem{cor}[thm]{Corollary}
\newtheorem{prop}[thm]{Proposition}
\newtheorem{rem}{Remark}[section]
\numberwithin{equation}{section}
\renewcommand{\a}{\alpha}
\renewcommand{\b}{\beta}
\newcommand{\e}{\varepsilon}
\newcommand{\de}{\delta}
\newcommand{\fa}{\varphi}
\newcommand{\ga}{\gamma}
\renewcommand{\k}{\kappa}
\newcommand{\la}{\lambda}
\renewcommand{\th}{\theta}
\newcommand{\si}{\sigma}
\renewcommand{\t}{\tau}
\newcommand{\om}{\omega}
\newcommand{\De}{\Delta}
\newcommand{\Ga}{\Gamma}
\newcommand{\La}{\Lambda}
\newcommand{\Om}{\Omega}
\newcommand{\lan}{\langle}
\newcommand{\ran}{\rangle}
\def\R{{\mathbb{R}}}
\def\N{{\mathbb{N}}}
\def\Z{{\mathbb{Z}}}
\def\T{{\mathbb{T}}}
\def\Tr{\operatorname{Tr}}
\def\lbr{\left(}
\def\rbr{\right)}
\newcommand{\vertiii}[1]{{\left\vert\kern-0.25ex\left\vert\kern-0.25ex\left\vert #1 
    \right\vert\kern-0.25ex\right\vert\kern-0.25ex\right\vert}}
\title{Interface motion from Glauber-Kawasaki dynamics \\ 
of non-gradient type}
\author{Tadahisa Funaki}
\date{\today}
\begin{document}
\maketitle

\begin{abstract}
We consider the Glauber-Kawasaki dynamics on a
$d$-dimensional periodic lattice of size $N$, that is,
a stochastic time evolution of particles performing random walks with interaction
subject to the exclusion rule (Kawasaki part), in general, 
of non-gradient type, together with the effect of the
creation and annihilation of particles
(Glauber part) whose rates are set to favor two levels of particle density, called
sparse and dense.  We then study the limit of our dynamics under 
the hydrodynamic space-time scaling, that is, 
$1/N$ in space and a diffusive scaling $N^2$ for the Kawasaki part and 
another scaling $K=K(N)$, which diverges slower, for the Glauber part in time.  
In the limit as $N\to\infty$, we show that
the particles autonomously make phase separation into sparse or dense
phases at the microscopic level, and an interface separating two regions 
is formed at the macroscopic level and evolves under an anisotropic 
curvature flow. 

In the present article, we show that the particle density at the macroscopic level
is well approximated by a solution of a reaction-diffusion equation with a nonlinear 
diffusion term of divergence form and a large reaction term.
Furthermore, by applying the results of Funaki, Gu and Wang [arXiv:2404.12234]
for the convergence rate of the diffusion matrix approximated by local functions,
we obtain a quantitative hydrodynamic limit as well as the upper bound for the 
allowed diverging speed of $K=K(N)$.

The above result for the derivation of the interface motion
is proved by combining our result with that in a companion paper
by Funaki and Park [arXiv:2403.01732], in which we analyzed the asymptotic 
behavior of the solution of the reaction-diffusion equation obtained in the 
present article and derived
an anisotropic curvature flow in the situation where the macroscopic
reaction term determined from the Glauber part is bistable and balanced.

\footnote{
\hskip -6mm 
Beijing Institute of Mathematical Sciences and Applications, 
No.\ 544 Hefangkou, Huairou District, Beijing 101408, China.
e-mail: funaki@ms.u-tokyo.ac.jp }
\footnote{
\hskip -6mm
Abbreviated title: Interface motion from Glauber-Kawasaki dynamics}
\footnote{
\hskip -6mm
MSC2020: 60K35, 82C22, 74A50.}
\footnote{
\hskip -6mm
Keywords: Kawasaki dynamics, exclusion process, Glauber dynamics, 
hydrodynamic limit, non-gradient model, anisotropic curvature flow, 
interface problem, phase separation.}
\end{abstract}

\section{Introduction --- model and results}

The present article studies the hydrodynamic behavior of the Glauber-Kawasaki
dynamics with a diverging scaling parameter $K$ in the Glauber part.  
The Kawasaki part governs the time evolution of particles moving as 
interacting random
walks subject to the exclusion rule.  It is generally of non-gradient type
and we assume its reversibility under Bernoulli measures.  
The Glauber part prescribes the law of creation and annihilation of particles.
Here we quote the original papers \cite{Gl} by Glauber and \cite{Ka} by 
Kawasaki initially
designed for stochastic dynamics corresponding to the Ising model.
We show that, in particular in the situation
that the particles have two favorable stable phases called `sparse' and `dense' with
different mean densities, they autonomously undergo phase separation into one of
these two phases at the 
microscopic level, and an interface separating the two regions is
formed at the macroscopic level and it evolves under an anisotropic curvature flow;
see Theorem \ref{Theorem 1.3}.

To show this, in the present article, we establish that the particle density
at the macroscopic level is well approximated by the solution of a reaction-diffusion
equation with nonlinear diffusion and a diverging reaction term;
see Theorem \ref{Theorem 1.1}.  The asymptotic behavior of the solution 
of this equation is studied in a separate article \cite{FP} based on a method
in partial differential equations (PDEs); 
see Theorems \ref{thm:gen} and \ref{thm:prop}.
Combining these results, we can complete the derivation of the
interface motion from our particle systems.  Note that the PDE is used only
secondarily.

The present article extends the results obtained in \cite{FvMST}, 
\cite{EFHPS}  for the Glauber-Kawasaki dynamics 
or the Glauber-Zero range process of gradient type to a model
of non-gradient type.  Moreover, applying the results in \cite{FGW},
we obtain a quantitative hydrodynamic limit with its convergence rate in
Theorem \ref{Theorem 1.1}
and also the upper bound for the allowed diverging speed of $K=K(N)$ in
the Glauber part.

\subsection{Model at microscopic level}  \label{Section 1.1}

Let us formulate our model.
We consider the Glauber-Kawasaki dynamics on a $d$-dimensional 
periodic square lattice  $\T_N^d= (\Z/N\Z)^d\equiv \{1,2,\ldots,N\}^d$ of large 
size $N\in\N\equiv \{1,2,\ldots\}$.  The generator  $\mathcal{L}_N$  
of our dynamics is given by the sum of those of Kawasaki and Glauber dynamics with
time change factors $N^2$ and $K=K(N)\ge 1$, respectively:
\begin{equation} \label{eq:LN}
\mathcal{L}_N = N^2 L_E + K L_G.
\end{equation}
The Kawasaki part (also called the exclusion process part)
is the same as in \cite{FUY}, \cite{F96}.  It is of non-gradient 
type and reversible under Bernoulli measures; see \eqref{1.1} and
the conditions (1)--(3) below.  The hydrodynamic scaling limit for
the Kawasaki dynamics reversible under Gibbs measures was studied in \cite{VY}.
In \cite{FvMST}, \cite{FvMST-2}, the so-called gradient 
condition was assumed for the Kawasaki part 
but here we discuss without assuming it.   The large deviation principle
corresponding to \cite{VY} was shown in \cite{BFG}.

To define the operators $L_E$ and $L_G$ precisely, we introduce
several notations.  The configuration space of the dynamics
is $\mathcal{X}_N = \{0,1\}^{\T_N^d}$  whose element is 
denoted by $\eta = \{\eta_x; x \in \T_N^d\}$  where
$\eta_x = 0$ or $1$  indicates that the site  $x$  is vacant or
occupied, respectively.  
We denote  $\mathcal{F}_N$  the set of all functions
on  $\mathcal{X}_N$.  Let  $\tau_x$, $x \in \T_N^d$, be the shift
operators acting on  $\mathcal{X}_N$  by  $(\tau_x\eta)_y = \eta_{y+x}, y \in
\T_N^d,$  where addition is modulo $N$.  They also act on  $\mathcal{F}_N$ by
$(\tau_x f)(\eta) = f(\tau_x\eta), f \in \mathcal{F}_N$.

For  $x, y \in \T_N^d$
and  $\eta\in \mathcal{X}_N$, $\eta^{x,y}$  denotes the element of $\mathcal{X}_N$,
obtained from  $\eta$ by exchanging the values of  $\eta_x$  and  $\eta_y$, 
that is,
\begin{align*}
(\eta^{x,y})_z = \left\{
\begin{aligned}
\eta_y,& \quad \text{ if } z=x, \\
\eta_x,& \quad \text{ if } z=y, \\
\eta_z,& \quad \text{ if } z\not= x, y.
\end{aligned}
\right.
\end{align*}
For  $x \in \T_N^d$
and  $\eta\in \mathcal{X}_N$, $\eta^{x}$  denotes the element of $\mathcal{X}_N$,
obtained from  $\eta$ by flipping the value of  $\eta_x$, that is,
\begin{align*}
(\eta^x)_z = \left\{
\begin{aligned}
1-\eta_x,& \quad \text{ if } z=x, \\
\eta_z,\;\;\;& \quad \text{ if } z\not= x.
\end{aligned}
\right.
\end{align*}

The notations  $\tau_x$, $\eta^{x,y}$  and $\eta^x$, $x,y\in \Z^d$ also indicate
the corresponding ones for  $\mathcal{X} = \{0,1\}^{\Z^d}$, the configuration space 
on the whole lattice $\Z^d$.  For  $\La \subset \T_N^d$ or  $\subset  \Z^d$,
$(\La)^*$ denotes the set of all (undirected) bonds  $b = \{x,y\}$  inside  $\La$,
i.e., $x,y \in \La$  and  $|x-y| = 1$.  Throughout the paper, we use
the norm $|z|$ for $z=(z_i)_{i=1}^d \in \Z^d$ in $\ell^\infty$-sense:
$|z|= \max_{1\le i \le d} |z_i|$. 
We sometimes write  $\eta^b$
instead of  $\eta^{x,y}$  for  bonds  $b = \{x,y\}$.

The generator $L_E$ of the Kawasaki part (exclusion process part)
on $\T_N^d$ is defined as
\begin{equation}
L_E  = \sum_{b \in (\T_N^d)^*} c_b(\eta) \pi_b 
  = \frac12 \sum_{x,y \in\T_N^d: |x-y| = 1} c_{x,y}(\eta) \pi_{x,y},
\label{1.1}
\end{equation}
where   $\pi_b \equiv \pi_{x,y}, b =\{x,y\},$  is the exchange operator 
on  $\mathcal{F}_N$  defined by
$$
\pi_b f(\eta) = f(\eta^b) - f(\eta), \quad f \in \mathcal{F}_N.
$$
The functions  $\{ c_b(\eta) \equiv c_{x,y}(\eta) ; b=\{x,y\} \in
(\Z^d)^* \}$  are defined on  $\mathcal{X}$  and determine the
jump (or exchange) rates of particles between two neighboring sites  
$x$  and  $y$.  We assume that they satisfy the following three conditions  (1)--(3):
\parindent=12mm
\begin{itemize}
\item[(1)] Non-degeneracy and locality: $c_{x,y}(\eta) > 0$  and  it is local,
that is, it depends only on  $\{\eta_z ; |z-x| \le r_c\}$  for some  $r_c>0$.
\item[(2)] Spatial homogeneity:  $c_{x,y} = \tau_x c_{0,y-x}$ for every 
$\{x,y\} \in  (\Z^d)^*$.
\item[(3)] Detailed balance under Bernoulli measures:  $c_{x,y}(\eta)$  
does not depend on $\{\eta_x, \eta_y\}$.
\end{itemize}
\parindent=8mm
In view of (1), the jump rate  $c_b(\eta)$  is naturally regarded as a
function on  $\mathcal{X}_N$  for  $b \in (\T_N^d)^*$, at least if  $N$  is large 
enough such that $N > 2r_c$.
The third condition (3) is equivalent to the symmetricity of  $L_E$
with respect to the Bernoulli measures  $\nu_{\rho}^N, \rho \in [0,1]$,  
that is, the product probability measures on $\mathcal{X}_N$ such that
$\nu_{\rho}^N(\eta_x =1) = \rho$  for every  $x\in \T_N^d$.  We will denote
the Bernoulli measures on $\mathcal{X}$ by $\nu_\rho$, under which the
operator $L_E$ considered on $\Z^d$ is symmetric; see \cite{F18}, \cite{FvMST}.
Note that we do not assume the gradient condition; cf.\ \cite{F18}, \cite{FvMST}.

On the other hand, the generator $L_G$ of the Glauber part is
given by
\begin{equation*}  
L_G  =\sum_{x\in\T_N^d} c_x(\eta) \pi_x,
\end{equation*}
where $\pi_x$, $x\in \T_N^d$, is the flip operator on $\mathcal{F}_N$
defined by
$$
\pi_x f(\eta) = f(  \eta^x )  -f(  \eta), \quad f \in \mathcal{F}_N.
$$
For the flip rates $c_x(\eta)$ defined on $\mathcal{X}$, we assume the 
non-negativity, the locality (with the same range $r_c>0$ as $c_{x,y}$)
and the spatial homogeneity, that is, 
$c_x(\eta) = \t_x c(\eta)$ for some local function $c(\eta) = c_0(\eta) \ge 0$ 
on $\mathcal{X}$ (regarded as that on $\mathcal{X}_N$ for $N$ large enough). 
Since $\eta_0$ takes values only in $\{0,1\}$, $c(\eta)$ can be decomposed as
\begin{equation}  \label{eq:c+-}
c(\eta) = c^+(\eta)(1-\eta_0)+c^-(\eta) \eta_0
\end{equation}
for some local functions $c^\pm(\eta)$ which do not depend on $\eta_0$.
We interpret $c^+(\eta)$ and $c^-(\eta)$ as the rates of creation and annihilation
of a particle at $x=0$, respectively. 

Let  $\eta^N(t) = \{\eta_x^N(t); x \in \T_N^d\}$  be the Markov process
on  $\mathcal{X}_N$  governed by the infinitesimal generator  $\mathcal{L}_N$ in
\eqref{eq:LN} with properly taken $K$ such that 
$1\le K=K(N) \nearrow\infty$ as $N\to\infty$.
We are interested in the asymptotic behavior as  $N \to \infty$ of
its macroscopic empirical mass distribution, that is, 
the measure-valued process defined by
\begin{equation}
\rho^N(t,dv) = N^{-d} \sum_{x\in\T_N^d} \eta_x^N(t) \de_{x/N}(dv),
       \quad   v\in \T^d,   \label{1.2}
\end{equation}
where  $\T^d = \R^d/\Z^d$  is a $d$-dimensional continuous torus identified
with  $[0,1)^d$  and  $\de_v$  is the $\de$-measure at  $v$.

Our main result can be stated as follows.  We consider the situation, under a proper
choice of rates $\{c_{x,y}, c_x\}$, that the particles favor two levels of mean
densities $\rho_+$ and $\rho_-\in (0,1)$ with the same degree of stability.
We then prove that $\rho^N(t,dv)$ converges to $\rho_+dv$ or $\rho_-dv$
on two regions separated by a hypersurface $\Ga_t$ called the interface and $\Ga_t$
evolves under the anisotropic curvature flow; see Theorem \ref{Theorem 1.3}.
We also give the rate of convergence in Theorem \ref{Theorem 1.1}.

The proof is divided into two parts, that is, a probabilistic part 
(Theorem \ref{Theorem 1.1}) which is developed in this article and a PDE part
summarized in Theorems \ref{thm:gen} and \ref{thm:prop}, shown in \cite{FP}.

\subsection{Diffusion matrix and reaction term at macroscopic level}

To state our result for the hydrodynamic limit, which is the main contribution of
this article, first corresponding to the
Kawasaki part, we introduce a quadratic form
$\left(\th,\widehat c(\rho)\th\right),\, \th \in \R^d,$
for each   $\rho \in [0,1]$ called the conductivity via the variational formula
\begin{equation}
\left(\th,\widehat c(\rho)\th\right) = \inf_{F\in \mathcal{F}_0^d}
   \left(\th,\widehat c(\rho;F)\th\right),    \label{1.3}
\end{equation}
where  $(\cdot,\cdot)$  is the inner product of  $\R^d$,
$\mathcal{F}_0$  denotes the class of all 
local functions  on  $\mathcal{X}$, $\mathcal{F}_0^d = (\mathcal{F}_0)^d$,
\begin{equation}  \label{eq:1.6-Q}
\left(\th,\widehat c(\rho;F)\th\right) = \frac12 
  \sum_{|x|=1} \left\lan c_{0,x} \bigg(\th,x(\eta_x-\eta_0) - 
    \pi_{0,x}\Big(\sum_{y \in\Z^d} \tau_y F\Big)\bigg)^2 \right\ran_\rho,  
\end{equation}
and  $\lan\, \cdot\, \ran_\rho\equiv E^{\nu_\rho}[\,\cdot\, ]$ stands for
the expectation with respect to the Bernoulli measure  $\nu_{\rho}$ on 
$\mathcal{X}$.  Note that $\pi_{0,x}(\sum_{y \in\Z^d} \, \tau_y F)$ is well-defined
as a finite sum $\sum_{y \in\Z^d} \,\pi_{0,x}( \tau_y F)$.  
We choose a $d\times d$ symmetric  matrix
$\widehat c(\rho) = \{\widehat c_{ij}(\rho)\}_{1\le i,j \le d}$, 
which is written in the same notation, corresponding to the quadratic
form introduced above, especially to apply results in partial differential equations;
cf.\ {\it Remark} after Theorem 1.1 of \cite{FUY}.
Proposition \ref{prop:5.4}, taken from \cite{FGW}, provides an information
on  a minimizing sequence $\{\Phi_n\}_n$ for $F$ in the variational formula
\eqref{1.3}.

We also introduce the compressibility:
\begin{equation}
\chi(\rho) = \rho - \rho^2,   \label{1.4}
\end{equation}
and the diffusion matrix $D(\rho) = \{D_{ij}(\rho)\}_{1\le i,j \le d}$ by the
Einstein relation:
\begin{equation}
D(\rho) = \frac{\widehat c(\rho)}{2\chi(\rho)}, \quad
  \rho \in [0,1],   \label{1.5}
\end{equation}
see Proposition 2.2 of \cite{10}, p.\ \!180 for the relation to the Green-Kubo formula.
It is known that $D(\rho)$ is a $C^\infty$-function of 
$\rho\in [0,1]$:
\begin{equation} \label{1.D}
D \in C^\infty([0,1]),
\end{equation}
see \cite{Ber}.
Furthermore, the diffusion matrix $D(\rho)$ is uniformly positive and bounded:
\begin{align}  \label{eq:1.9}
c_* |\th|^2 \le (\th,D(\rho)\th) \le c^* |\th|^2, \quad \th \in \R^d, \; \rho\in [0,1],
\end{align}
where $c_*, c^*>0$ are constants defined by
$$
0< c_* := \min_{\eta\in\mathcal{X}, x\in \Z^d: |x|=1} c_{0,x}(\eta) 
\le c^* := \max_{\eta\in\mathcal{X}, x\in \Z^d: |x|=1} c_{0,x}(\eta) < \infty,
$$
which follows from the condition (1); see Lemma \ref{lem:1.1} below
for the proof of \eqref{eq:1.9}.

Next, corresponding to the Glauber part, we introduce a function $f=f(\rho)$
as an ensemble average under $\nu_\rho$ as follows:
\begin{align} \label{eq:f}
  f(\rho) 
&= \lan(1-2\eta_0) c(\eta)\ran_\rho    \\ \notag
& = (1-\rho) \lan c^+\ran_\rho - \rho \lan c^-\ran_\rho.
\end{align}
Note that $f$ is a polynomial of $\rho$ and, in particular, a $C^\infty$-function 
of $\rho\in [0,1]$. Note also that $f(0)>0$ and $f(1)<0$, which is important to
show the comparison theorem; see Section \ref{sec:8}.

\subsection{Quantitative and non-gradient hydrodynamic limit}

We show that, asymptotically as  $N \to \infty$, 
the macroscopic empirical mass distribution  $\rho^N(t,dv)$
is close to the solution $\rho(t,v)=\rho_K(t,v)$  of the following 
reaction-diffusion equation with a nonlinear diffusion term and 
a diverging factor $K=K(N)$ in a reaction term
\begin{equation}
\partial_t \rho(t,v) = \sum_{i,j=1}^d
  \partial_{v_i} \big\{ D_{ij}(\rho(t,v))\partial_{v_j}\rho(t,v) \big\} + K f(\rho(t,v)),
  \quad t\ge 0, \; v\in \T^d,
               \label{1.6}
\end{equation}
where $ v=(v_i)_{i=1}^d$.  
We assume that its initial value  $\rho_0(v)$ satisfies
the condition:
\begin{equation} \label{eq:1.12}
\rho_0\in C^5(\T^d) \quad \text{ and } \quad 0<\rho_0(v)<1.
\end{equation}
Then, the equation \eqref{1.6} has a unique classical solution $\rho(t,v) \in
C^{1,3}([0,\infty)\times \T^d)$ by applying the results in \cite{LSU};
see the beginning of Section \ref{sec:Schauder}.

To state our result, let  $\nu^N\equiv \nu_{1/2}^N$  be the Bernoulli measure 
on $\mathcal{X}_N$ with $\rho=1/2$, that is, the uniform probability measure on  
$\mathcal{X}_N$: $\nu^N(\eta) = 2^{-N}, \eta\in\mathcal{X}_N$.
For two probability densities  $f$  and   $g$  with respect to $\nu^N$, 
define the relative entropy  $H(f|g) \equiv H_N(f|g)$  by
\begin{equation}
H(f|g) = \int_{\mathcal{X}_N} f \log(f/g)\,d\nu^N.   \label{2.1}
\end{equation}
We set
\begin{equation}
\bar{\rho}(\la) = e^\la / (e^\la +1), \quad \la \in \R,  \label{2.3}
\end{equation}
and denote its inverse function by  
$\bar{\la}(\rho)$:  
\begin{equation}  \label{eq:2.4}
\bar{\la}(\rho) = \log \{\rho/(1-\rho)\}, \quad \rho \in (0,1).  
\end{equation}

Let $f_0=f_0(\eta), \eta \in \mathcal{X}_N,$ be the (initial) density of the 
distribution of $\eta^N(0)$ on $\mathcal{X}_N$ with respect to $\nu^N$
and, for $\rho_0=\rho_0(v)$ satisfying \eqref{eq:1.12}, let 
\begin{equation}  \label{eq:1.tildepsi}
\psi_{0,0}(\eta) =Z_N^{-1} \exp\bigg\{\sum_{x\in \T_N^d} 
\bar{\la}(\rho_0(x/N))\eta_x\bigg\}
\end{equation}
with $\bar{\la}$  defined by \eqref{eq:2.4} and a normalization constant $Z_N$ 
with respect to $\nu^N$.  In other words, $\psi_{0,0}d\nu^N$, denoted
also by $P^{\psi_{0,0}}$, is a product measure on $\mathcal{X}_N$ with
a marginal distribution $P^{\psi_{0,0}}(\eta_x=1) = \rho_0(x/N)$ for
every $x\in \T_N^d$.  We use the nation $\psi_{0,0}$ to be consistent with
$\psi_{0,F}$ which denotes the local equilibrium state of the second order
approximation $F$; cf.\ \eqref{eq:2psit} and above Lemma \ref{lem:Add-2}.
In $\psi_{0,0}$, the first `$0$' means `time $0$' and the second `$0$'
is for `$F=0$'.

Then, the difference between $\rho^N(t,dv)$ and $\rho_K(t,v)$ with
$K=K(N)$ is estimated in $L^2(\Om,H^{-\a}(\T^d))$
as in the following theorem.  We call it a quantitative
hydrodynamic limit, since it gives the convergence rate. Note that
$\rho_K$ is also moving in $N$.  To set the upper threshold for the allowed
$K=K(N)$, define
\begin{align}  \label{eq:overK}
\overline{K}(N) \equiv \overline{K}_\de(N):= \de \log N,
\end{align}
for $\de>0$.

\begin{thm} \label{Theorem 1.1}
For each $K\ge 1$, let $\rho_K(t,v)$ be the solution of the equation \eqref{1.6} 
with an initial value $\rho_0(v)$ that satisfies the condition \eqref{eq:1.12}.
We assume that $f_0$ and $\psi_{0,0}$ defined above satisfy 
$H(f_0| \psi_{0,0}) \le C_1N^{d-\k_1}$ for some $C_1>0$ and $\k_1>0$.  
Then, there exists $\frak{c}>0$ small enough such that
for any $T>0$, if the sequence $K=K(N)\to\infty$ (as $N\to\infty$)
satisfies $1\le K(N)\le \overline{K}_\de(N)$ with $\de=\de_T:=\frak{c}/T$,
we have
\begin{align}  \label{eq:1.18-P}
\sup_{t\in [0,T]} E \big[ \|\rho^N(t) - \rho_K(t)\|_{H^{-\a}(\T^d)}^2 \big] \le C N^{-\k}
\end{align}
for every $\a>d/2$, and some $C=C_{T,\a}>0$ and $\k=\k_{\frak{c},\a}>0$, 
where $H^{-\a}(\T^d)$ is the usual Sobolev space on $\T^d$
with the exponent $-\a$; see Section \ref{Section 2.2} for the precise definition.
\end{thm}

Dizdar et al.\ \cite{DMOW} has a similar quantitative result for
one-dimensional Ginzburg-Landau lattice model of gradient type with
$d=1, \a=1, \k=2/3$.  In the above theorem, the range of $\a$ is $\a>d/2$,
which is taken as widely as possible for the measure $\rho^N(t)$ on $\T^d$.
Our $\k>0$
is determined from several factors and small.  The central limit theorem
scaling may suggest $\k \simeq d$.

The constants $\frak{c}>0$ and $\de>0$ in $\overline{K}_\de(N)$ 
are chosen in Theorem \ref{Corollary 2.1}, or in Section \ref{sec:6.2},
for showing the estimate on the relative entropy.  In fact, to make $\k>0$, 
the product $\de\cdot T$ must be small enough and this determines $\frak{c}>0$.  
Once $\frak{c}$ is determined, we choose $\de=\de_T=\frak{c}/T$.  In this sense,
$\k$ depends on $\frak{c}$, but it is independent of $\de$ and $T$.
Then $\k>0$ in \eqref{eq:1.18-P} is chosen
again depending on $\a$ in Section \ref{Section 2.2}.

The assumption for $H(f_0|\psi_{0,0})$ in this theorem is satisfied
if we take $f_0 = \psi_{0,0}$.

In \cite{EFHPS}, \cite{FvMST},  a similar theorem
was shown for models of gradient type; see also \cite{FvMST-2}.
In particular, it was shown that the theorem holds taking
$\overline{K}(N) = \de(\log N)^{\si/2}$ for some small $\de>0$ and some
$\si\in (0,1)$.  The bound for $\overline{K}(N)$ in Theorem \ref{Theorem 1.1}
is better than this.  The reason is that our estimates are more accurate
for several error terms.  Moreover, for our non-gradient model,
new results obtained in \cite{FGW} based on
the method of the quantitative homogenization theory are essential to derive
the above decay rate $CN^{-\k}$ and also to
determine the upper bound $\overline{K}(N)$.

Theorem \ref{Theorem 1.1} reduces the study of the limit of $\rho^N(t,dv)$ as
$N\to\infty$ to that of $\rho_K(t,v)$ as $K\to\infty$.
The latter is a pure PDE problem and is discussed separately in a companion
paper \cite{FP}.  The results are summarized in the next subsection.

\subsection{Interface motion from nonlinear Allen-Cahn equation}
\label{sec:1.4}

We now consider the case that $d\ge 2$ and assume that the reaction term
$f(\rho)$ in \eqref{1.6} is bistable, 
i.e.\ $f$ has three zeros $0<\rho_- < \rho_*<\rho_+<1$ such that 
$f'(\rho_\pm)<0$ and $f'(\rho_*)>0$.
In addition, we assume the balance condition:
$$
\int_{\rho_-}^{\rho_+} f(\rho) D(\rho) d\rho =0,
$$
as matrices.
This means that  two stable phases with densities $\rho_\pm$,
sparse and dense, have the same degree of stability.  
See Section \ref{Section9} for some examples of the jump rate $c_{x,y}(\eta)$
of the Kawasaki  part and the flip rate $c_{x}(\eta)$ of the Glauber part
for which the corresponding $f$ and $D$ satisfy all these conditions.
Recalling the condition \eqref{eq:1.12} for the initial value  $\rho_0(v)$,
we define $\Gamma_0$ by
\begin{align*}
	\Gamma_0 := \{ v \in \T^d: \rho_0(v) = \rho_*\}
\end{align*}
and assume that $\Gamma_0$  is a $C^5$-hypersurface in $\T^d$ without boundary 
such that
\begin{align}  \label{cond:gamma0_normal}
	\nabla \rho_0(v) \cdot n(v) \neq 0 ~\text{ for }~ v \in \Gamma_0, 
\end{align}
where $n(v)$ is the normal vector to $\Ga_0$.
Two regions $\Om_0^\pm$ surrounded by $\Ga_0$ are defined by
$\Om_0^+=\{\rho_0>\rho_*\}$ and $\Om_0^-=\{\rho_0<\rho_*\}$, respectively.

Then, one can show that 
$$
\rho_K(t,v) \to \Xi_{\Ga_t}(v)
\quad \text{as }\;  K\to\infty.
$$
Here, $\Ga_t$ is a smooth closed hypersurface in $\T^d$ and $\Xi_{\Ga_t}$
is a step function taking two values $\rho_\pm$ defined by
\begin{equation*}
\Xi_{\Ga_t}(v) = \left\{
\begin{aligned}
\rho_+,& \quad v \in \Om_t^+, \\
\rho_-,& \quad v \in \Om_t^-,
\end{aligned}\right.
\end{equation*}
where $\Om_t^\pm$ are two regions surrounded by $\Ga_t$.
The sides of these regions are determined initially at $t=0$ and 
then continuously for $t>0$.

The evolution of $\Ga_t$ starting from $\Ga_0$ is governed by the equation
\begin{equation}  \label{eq:d}
V=-{\rm Tr} (\mu(n) \partial_v n)
\equiv -\sum_{i,j=1}^d \mu_{ij}(n) \partial_{v_i}n_j
\quad \text{on }~ \Ga_t,
\end{equation}
where $V$ is the normal velocity of $\Ga_t$ from the side of $\Om_t^-$ to 
$\Om_t^+$
and $ n=(n_i)$ denotes the unit normal vector to $\Ga_t$ of the same direction.
This describes an anisotropic (direction dependent) curvature flow. 
It is a mean curvature flow in the special case that $\mu_{ij}(e) = \mu \de_{ij}$
with a constant $\mu$.

The matrix $\mu(e) = \{\mu_{ij}(e)\}_{1\le i,j \le d}$ is defined 
for $e\in \R^d$: $|e|=1$ as
\begin{align*}
    \mu_{ij}(e)  &= \frac{1}{\la(e)} \int_{\rho_-}^{\rho_+} 
    \left[     D_{ij}(\rho) \sqrt{W_e(\rho)} -\frac12 \partial_{e_i} (W_e(\rho))
    \,\partial_{e_j} \left( \frac{ a_e(\rho)}{\sqrt{W_e(\rho)}} \right)  \right] d\rho,  \\
    \la(e)   &=  \int_{\rho_-}^{\rho_+}  \sqrt{W_e(\rho)} d\rho,  \quad
    W_e(\rho)  = -2 \int_{\rho_-}^\rho a_e(\rho) f(\rho) d\rho, \quad a_e(\rho) = e \cdot D(\rho) e.
\end{align*}
Note that $W_e(\rho)\ge 0$ for $\rho\in [\rho_-, \rho_+]$ by the conditions
for $f(\rho)$.

The local-in-time well-posedness on a certain time interval $[0,T]$ with
$T>0$ of the equation \eqref{eq:d} follows from the non-degeneracy of $\mu(e)$
in the tangential direction to the interface $\Ga$: For some $c>0$,
$$
(\th, \mu(e) \th) \ge c |\th|^2
\quad \text{ for } \; \th \in \R^d \; \text{ such that } \; (\th, e)=0.
$$
Note that \eqref{eq:d} can be rewritten in an equivalent PDE  for the signed 
distance function $d(t,v)$ from $\Ga_t$;  see \cite{FP}.

The first result is for the generation of the interface, that is,
$\rho_K(t,v)$ reaches the neighborhood of $\rho_-$ or $\rho_+$ in 
a very short time of order $K^{-1}\log K$.

\begin{thm}\label{thm:gen}  {\rm (Theorem 1.1 of \cite{FP} on $\Om=\T^d$)}
Let $\rho_K(t,v)$ be the solution of the equation \eqref{1.6} and let
$\epsilon$ be such that $0 < \epsilon < \bar\rho$, where $\bar\rho 
:= \min \{ \rho_+ - \rho_*, \rho_* - \rho_- \}$.  We assume the conditions
given at the beginning of this subsection.  
 Then, there exist $K_0>0$ and $M_0>0$ such that, for all
$K\ge K_0$, the following holds at $t= t_K := K^{-1}\log K /(2f'(\rho_*))$.
\begin{enumerate}
\item[$(1)$]  $\rho_- - \epsilon
	\leq \rho_K(t_K,v) \leq \rho_+ + \epsilon$ for all $v \in \T^d$,
\item[$(2)$]  $\rho_K(t_K,v) \geq \rho_+ - \epsilon$
for $v\in \T^d$ such that  $\rho_0(v) \geq \rho_* +  M_0 /K^{1/2}$,
\item[$(3)$]  $\rho_K(t_K,v) \leq \rho_- + \epsilon$
for $v\in \T^d$ such that  $\rho_0(v) \leq \rho_* - M_0/K^{1/2}$.
\end{enumerate}
\end{thm}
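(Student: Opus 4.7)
I would prove Theorem \ref{thm:gen} as a pure PDE statement about the solution $\rho_K$ of \eqref{1.6}, using the sub/super-solution method built on the spatially-independent ODE
\[
\partial_\t Y = f(Y), \quad Y(0;\xi)=\xi,
\]
that governs the reaction dynamics in the absence of diffusion. The guiding heuristic is that on the time scale $t=O(K^{-1}\log K)$, the rescaled time $\t=Kt$ is of order $\log K$, and the reaction term $Kf$ exponentially amplifies pointwise deviations from $\rho_*$ at rate $Kf'(\rho_*)$, while the diffusion acts over negligible spatial scales $O(\sqrt{t_K})$ and contributes only a mild perturbation because $\rho_0\in C^5$.

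\textbf{Step 1 (ODE analysis).} I would first analyze $Y(\t;\xi)$. The bistable hypothesis together with $f'(\rho_\pm)<0<f'(\rho_*)$ yields, via standard phase-plane arguments: (i) the interval $[\rho_--\epsilon,\rho_++\epsilon]$ is forward invariant for $\epsilon>0$ small (since $f>0$ just below $\rho_-$ and $f<0$ just above $\rho_+$); (ii) for $\xi=\rho_*+\eta$ with $0<\eta\ll 1$, $Y(\t;\xi)$ escapes $[\rho_*,\rho_*+\bar\rho/2]$ by some time $\t_1(\eta)\le (f'(\rho_*))^{-1}\log(C/\eta)$ and then converges to $\rho_+$ exponentially. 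Taking $\eta\ge M_0/K^{1/2}$ and $\t=Kt_K=(\log K)/(2f'(\rho_*))$ gives $Y(\t;\xi)\ge \rho_+-\epsilon/2$ once $M_0$ is chosen large, with the symmetric statement for $\xi<\rho_*$. The variational ODEs
\[
\partial_\t(\partial_\xi Y)=f'(Y)\partial_\xi Y, \qquad \partial_\t(\partial_\xi^2 Y)=f'(Y)\partial_\xi^2 Y+f''(Y)(\partial_\xi Y)^2,
\]
give bounds $|\partial_\xi Y|\le CK^{1/2}$ and $|\partial_\xi^2 Y|\le CK$ at $\t=Kt_K$.

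\textbf{Step 2 (sub/super-solutions and comparison).} Next I would set
\[
w^\pm(t,v) = Y\bigl(Kt;\,\rho_0(v)\bigr)\pm q(t),
\]
with $q(t)$ solving a linear ODE of the form $q'(t)=Kf'(\rho_*)\,q(t)+C_0$, $q(0)=0$, so that $q(t_K)=O(K^{-1/2})$. Substituting into \eqref{1.6} and applying the chain rule, the diffusion contributions involve $\partial_\xi Y\cdot\sum_{ij}\partial_{v_i}(D_{ij}\partial_{v_j}\rho_0)$ and $\partial_\xi^2 Y\cdot(\partial_{v_i}\rho_0)(\partial_{v_j}\rho_0)$; by Step 1 and the $C^5$-regularity of $\rho_0$, together with \eqref{1.D} and \eqref{eq:1.9}, these are bounded on $[0,t_K]\times\T^d$ by a constant $C_0$ chosen to match the forcing in the $q$-equation. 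This makes $w^-$ a sub-solution and $w^+$ a super-solution of \eqref{1.6}. Since $D(\rho)$ is uniformly elliptic and $f\in C^\infty$, the standard parabolic comparison principle yields $w^-\le \rho_K\le w^+$ on $[0,t_K]\times\T^d$. Combining with Step 1 at $t=t_K$ gives (2) and (3); (1) follows from the invariant-interval property in Step 1(i) plus the $O(K^{-1/2})$ correction $q$.

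\textbf{Main obstacle.} The delicate point, and the technical core of the argument, is the exact matching of scales at the $M_0/K^{1/2}$ threshold. The sensitivity $|\partial_\xi Y|$ has grown precisely to $K^{1/2}$ at $\t=Kt_K$, so the diffusion-induced error, which is roughly $t_K\cdot\|\rho_0\|_{C^2}\cdot K^{1/2}\sim K^{-1/2}\log K$, is only barely dominated by the initial margin $M_0/K^{1/2}$. Calibrating $q(t)$ so that the sign conditions
\[
\partial_t w^\pm - \sum_{i,j=1}^d \partial_{v_i}\bigl(D_{ij}(w^\pm)\partial_{v_j}w^\pm\bigr) - Kf(w^\pm) \gtrless 0
\]
hold uniformly despite the simultaneous exponential growth of $\partial_\xi Y$ near the unstable root $\rho_*$ is what pins down both the exponent $1/2$ in $M_0/K^{1/2}$ and the factor $1/2$ in $t_K=(\log K)/(2Kf'(\rho_*))$; a weaker threshold or a longer time window would leave the diffusion error unabsorbed.
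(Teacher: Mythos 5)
The paper states Theorem~\ref{thm:gen} as a result quoted from the companion paper \cite{FP} and gives no proof here, only the one-line indication that it follows by constructing sub- and super-solutions of \eqref{1.6} and applying the comparison principle.  Your proposal follows that overall strategy, but the specific construction in Step~2 has a gap that is fatal as written.

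You set $w^\pm(t,v)=Y(Kt;\rho_0(v))\pm q(t)$, claim that the diffusion contributions $\partial_\xi Y\cdot\partial_{v_i}(D_{ij}\partial_{v_j}\rho_0)$ and $\partial_\xi^2 Y\cdot\partial_{v_i}\rho_0\,\partial_{v_j}\rho_0$ (you also omit the $D_{ij}'\,(\partial_\xi Y)^2$ piece) ``are bounded on $[0,t_K]\times\T^d$ by a constant $C_0$,'' and then solve $q'=Kf'(\rho_*)q+C_0$ to obtain $q(t_K)=O(K^{-1/2})$.  But your own Step~1 gives $|\partial_\xi Y|\le CK^{1/2}$ and $|\partial_\xi^2 Y|\le CK$ at $\tau=Kt_K$, so the diffusion contribution grows to $O(K)$ over $[0,t_K]$, not $O(1)$; the correct forcing is $\sim e^{2f'(\rho_*)Kt}$, and solving the $q$-equation with that forcing (with or without the $Kf'(\rho_*)q$ damping) yields $q(t_K)=O(1)$, which cannot be absorbed when $\epsilon$ is small.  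In your ``Main obstacle'' paragraph the cumulative diffusion error is estimated as $t_K\cdot K^{1/2}\sim K^{-1/2}\log K$; this linearizes away the $(\partial_\xi Y)^2$ and $\partial_\xi^2 Y$ terms and is off by a factor of $K^{1/2}$.  The standard resolution in the generation-of-interface literature (Chen; de~Mottoni--Schatzman; Alfaro--Hilhorst--Matano) puts the correction inside the argument of the flow, $w^\pm(t,v)=Y\bigl(Kt;\rho_0(v)\pm p(t)\bigr)$: then $Kf(w^\pm)$ cancels exactly against $K\partial_\tau Y$, the correction contributes $\partial_\xi Y\cdot p'(t)$ which carries the same $\partial_\xi Y$ prefactor as the diffusion error, and the key ODE estimates $(\partial_\xi Y)^2\le C\,\partial_\xi Y\,e^{\mu\tau}$ and $|\partial_\xi^2 Y|\le C\,\partial_\xi Y\,e^{\mu\tau}$ let one close with $p'(t)\gtrsim e^{\mu Kt}$, hence $p(t_K)=O(K^{-1/2})$, matching the $M_0/K^{1/2}$ margin.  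Your additive ansatz misses both the exact cancellation of the $Kf$ terms and the crucial $\partial_\xi Y$ factor in front of $p'$, which is precisely what makes the $1/2$ in the exponent work.
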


The second result is for the propagation of the interface, that is,
the derivation of the interface motion $\Ga_t$ as long as it remains smooth.
The size of the transition layer is $O(K^{-1/2})$.

\begin{thm}\label{thm:prop}  {\rm (Theorem 1.2 of \cite{FP})}
Under the conditions of Theorem \ref{thm:gen}, for any $0 < \epsilon
  < \bar\rho$, there exist $K_0 > 0, C > 0$ and $T > 0$ (within the 
  local-in-time well-posedness of \eqref{eq:d})
  such that for every $K\ge K_0$
and $t \in [t_K, T]$ we have
\begin{align*}
\rho_K(t,v) \in~
	\begin{cases}
    [\rho_{-} - \epsilon, \rho_{+} + \epsilon] 
	&\text{ for all }
	v \in \T^d,
    \\
	[\rho_{+} - \epsilon, \rho_{+} + \epsilon] 
	& \text{ if } v \in \Om_t^+ \setminus \mathcal{N}_{ C/ K^{1/2}}(\Ga_t),
	\\
	[\rho_{-} - \epsilon, \rho_{-} + \epsilon] 
	& \text{ if } v \in \Om_t^- \setminus \mathcal{N}_{ C/K^{1/2} }(\Ga_t),
	\end{cases}
\end{align*}
where $\mathcal{N}_r(\Ga_t) := \{ v \in \T^d, {\rm dist}(v, \Ga_t) \le r \}$ is the 
$r$-neighborhood of $\Ga_t$.
\end{thm}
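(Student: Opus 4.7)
\textbf{Proof proposal for Theorem \ref{thm:prop}.} The plan is to prove propagation of the interface by a matched asymptotic expansion combined with the parabolic comparison principle for \eqref{1.6}. Set $\varepsilon := K^{-1/2}$, the natural width of the interfacial layer. For each unit vector $e$, introduce the standing wave profile $Q_e(z)$ solving the ODE
\begin{equation*}
\bigl(a_e(Q) Q'\bigr)' + f(Q) = 0, \qquad Q(\pm\infty) = \rho_\pm, \quad Q(0) = \rho_*,
\end{equation*}
where $a_e(\rho) = e\cdot D(\rho)e$. Existence follows from the bistability of $f$ and the balance condition, and one verifies $(Q_e')^2 = W_e(Q_e)/a_e(Q_e)^2$ via first integration. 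The linearization $\mathcal{L}_e\varphi := (a_e(Q_e)\varphi')' + f'(Q_e)\varphi$ has a one-dimensional kernel spanned by $Q_e'$, together with a uniform spectral gap inherited from the bistability $f'(\rho_\pm) < 0$.

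Let $d(t,v)$ denote the signed distance from $v$ to $\Ga_t$ (positive on $\Om_t^+$), extended smoothly to a tubular neighborhood of $\Ga_t$, and put $n(t,v) = \nabla d(t,v)$. Seek a formal ansatz
\begin{equation*}
\rho_K(t,v) \sim Q_{n(t,v)}\!\left(d(t,v)/\varepsilon\right) + \varepsilon\, \rho^{(1)}\!\left(t,v,d(t,v)/\varepsilon\right) + \cdots.
\end{equation*}
Substituting into \eqref{1.6}, the $O(\varepsilon^{-2})$ identity recovers the profile ODE above. The $O(\varepsilon^{-1})$ equation takes the form $\mathcal{L}_{n}\rho^{(1)}(t,v,\cdot) = R(t,v,\cdot)$ on $\R$, and the Fredholm solvability condition $\int_\R Q_n'(z)\, R(z,t,v)\,dz = 0$ reduces, after computation using the explicit forms of $W_e$ and $a_e$, to precisely the anisotropic curvature equation $V = -\Tr(\mu(n)\partial_v n)$ with $\mu$ as in the statement. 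Since $\Ga_t$ satisfies \eqref{eq:d} by assumption, this compatibility condition is met and $\rho^{(1)}$ can be chosen smooth in $(t,v)$ with exponential decay in $z$.

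Using this expansion, build modified profiles
\begin{equation*}
\rho_K^\pm(t,v) = Q_{n(t,v)}\!\left(\frac{d(t,v) \mp \varepsilon q(t)}{\varepsilon}\right) \pm \varepsilon\, \rho^{(1)}\!\left(t,v,\frac{d(t,v) \mp \varepsilon q(t)}{\varepsilon}\right) \pm \varepsilon^{2}\sigma e^{\beta t},
\end{equation*}
with suitably chosen $q(t),\sigma,\beta > 0$, and verify by direct substitution that $\rho_K^+$ is a super-solution and $\rho_K^-$ is a sub-solution of \eqref{1.6}. The uniform spectral gap at $\rho_\pm$ absorbs the $O(1)$ residual in the region away from the layer, while the solvability condition of the previous step controls the residual inside the layer. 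At $t = t_K$, Theorem \ref{thm:gen} together with \eqref{cond:gamma0_normal} sandwiches $\rho_K(t_K,\cdot)$ between $\rho_K^\pm(t_K,\cdot)$ after an $O(\varepsilon)$ shift of the interface. The parabolic maximum principle for \eqref{1.6}, which applies by \eqref{1.D} and \eqref{eq:1.9}, then propagates the sandwich to every $t \in [t_K,T]$ inside the well-posedness interval of \eqref{eq:d}, yielding the three case estimates with transition layer width $C/K^{1/2}$.

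The main obstacle is the careful derivation of the solvability condition and the verification of the super/sub-solution property. Because $Q_e(z)$ depends on $e = n(t,v)$, differentiating the ansatz in $v$ produces extra terms involving $\partial_v n$; matching them against the anisotropic curvature term requires exactly the matrix $\mu(e)$ given in the theorem, and the explicit expression is recovered only after integrating by parts in $z$ against $Q_n'$. A secondary difficulty is controlling $\rho^{(1)}$ and its derivatives uniformly in $n$ on a weighted Hilbert space where $\mathcal{L}_n$ is invertible on $(Q_n')^\perp$; this uses the $C^5$-regularity of $\Ga_0$ and the local-in-time smoothness of $d(t,v)$ inherited from the well-posedness of \eqref{eq:d}.
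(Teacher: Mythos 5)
The paper does not prove this theorem; it is quoted verbatim from the companion paper \cite{FP} (Theorem~1.2 there), and the present paper offers only a one-line description of the method: \emph{``The proofs of these two theorems are given based on the comparison theorem for the PDE \eqref{1.6}. We construct its super and sub solutions.''} Your sketch is fully consistent with that description, and the ingredients you identify are the standard ones for this kind of singular-limit result in the spirit of de~Mottoni--Schatzman, Chen, and Alfaro--Hilhorst--Matano: the one-dimensional standing-wave profile $Q_e$ (and your first integral $(a_e(Q_e)Q_e')^2 = W_e(Q_e)$ is correct given the definition of $W_e$ and the balance condition, which gives $W_e(\rho_+)=0$), the solvability condition at order $\varepsilon^{-1}$ producing \eqref{eq:d}, and super/sub-solutions with a shift $q(t)$ and an $O(\varepsilon^2)$ shelf absorbed by the spectral gap at $\rho_\pm$. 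So the route you take is, at the level of strategy, the paper's route.

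That said, as a \emph{proof} what you have is only an outline, and the two places you flag as ``the main obstacle'' are precisely where all the work lives. First, because $Q$ depends on the direction $e=n(t,v)$ and not only on the stretched variable $z$, the chain rule in $v$ produces $\partial_e Q\cdot\partial_v n$ terms that enter the residual at the same order as the curvature; one must integrate by parts against $Q_n'$ and track these carefully to land on the specific matrix $\mu(e)$ (with the $\partial_{e_i}W_e$ and $\partial_{e_j}(a_e/\sqrt{W_e})$ pieces) stated in the theorem, and you have not exhibited that computation. Second, verifying the super/sub-solution inequalities is not a purely formal consequence of the solvability condition: one needs pointwise (not just $L^2$) control on $\rho^{(1)}$ and its derivatives uniformly in $n$, and uniform positivity of the spectral gap for $\mathcal{L}_n$ on $(Q_n')^\perp$; this in turn rests on the regularity of $d(t,v)$ on $[0,T]$, which is where the $C^5$ hypothesis on $\Gamma_0$ and the local-in-time well-posedness of \eqref{eq:d} are actually consumed. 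You correctly name these issues, but a complete argument would have to carry them out, as \cite{FP} does. As it stands the proposal is a correct roadmap, not a proof.
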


The proofs of these two  theorems are given based on the comparison theorem
for the PDE \eqref{1.6}.  We construct its super and sub solutions.
If the PDE \eqref{1.6} is a gradient flow
of a certain functional, one can apply the method of $\Ga$-convergence
to derive the motion of $\Ga_t$.  In particular, if the energy in the limit 
is a total surface tension, one would obtain an evolution of Wulff shape.
But our equation is different from this class.

\subsection{Main result}

The following theorem is obtained for the particle system $\eta^N(t)$
(with $K=K(N)$)
by combining Theorems \ref{Theorem 1.1} and \ref{thm:prop}.
The convergence rate for \eqref{eq:1.20} is much slower than $N^{-\k}$
given in Theorem \ref{Theorem 1.1}, since it contains \eqref{eq:1.21}
and the rate for this term is given in terms of $K$. 

\begin{thm} \label{Theorem 1.3}
Assume the condition \eqref{eq:1.12} for the initial value $\rho_0(v)$ and 
$H(f_0| \psi_{0,0}) \le C_1N^{d-\k_1}$ for some $C_1>0$ and $\k_1>0$
in Theorem \ref{Theorem 1.1} and
those in Theorem \ref{thm:prop}, that is, $d\ge 2$, $f(\rho)$ determined 
in \eqref{eq:f} from the flip rate of the Glauber part and the initial value 
$\rho_0(v)$ satisfy the conditions given at the beginning of Section \ref{sec:1.4}.
We also assume the sequence $K(N)\to\infty$ satisfies $1\le K(N)\le
\overline{K}_{\de_T}(N)$ as in Theorem \ref{Theorem 1.1}.

Then, for every $\a>d/2$ and $t_0\in (0,T]$, we have
\begin{equation}  \label{eq:1.20}
\lim_{N\to\infty} \sup_{t\in [t_0,T]} E[\|\rho^N(t) - \Xi_{\Ga_t}\|_{H^{-\a}(\T^d)}^2]
=0,
\end{equation}
where $T>0$ is determined as in Theorem \ref{thm:prop}.
\end{thm}

\begin{proof}
Noting that $\Xi_{\Ga_t}= \rho_+ 1_{\Om_t^+}+\rho_-1_{\Om_t^-}$ and $|\T^d|=1$,
for any $0 < \epsilon< \bar\rho$, we have by Theorem \ref{thm:prop}
$$
|\lan \rho_K(t),\phi\ran -\lan \Xi_{\Ga_t},\phi\ran|
\le \big(\epsilon+ (C_\epsilon/K^{1/2})^{d-1} \cdot |\Ga_t| \big) \|\phi\|_\infty
$$
for $K\ge K_0=K_{0,\epsilon}$ and $t\in [t_K, T]$.  This implies
\begin{equation}  \label{eq:1.21}
\|\rho_K(t) - \Xi_{\Ga_t}\|_{H^{-\a}(\T^d)}
\le C \big(\epsilon+ (C_\epsilon/K^{1/2})^{d-1} \cdot |\Ga_t| \big)
\end{equation}
for $\a>d/2$ and some $C>0$ as in the proof of Theorem \ref{Theorem 1.1} 
given in Section \ref{Section 2.2}.  Choose $\epsilon>0$ 
small enough and $K=K(N) \ge K_0$ taking $N$ large.  Then, since 
$|\Ga_t|$ is bounded, one can make the right-hand side smaller than 
$2C\epsilon$ for large enough $N$.  Note also that $t_K\le t_0$ holds 
for $K$ large.  Thus, the conclusion follows from Theorem \ref{Theorem 1.1}.
\end{proof}

This theorem establishes the autonomous phase separation directly for
the particle system.  The PDE \eqref{1.6} was used only secondarily.
We set the flip rate $c_0$ in the Glauber part as the corresponding $f$
determined in \eqref{eq:f} to satisfy the bistability and balance conditions.
This means that the particles prefer two phases with mean densities
$\rho_-$ or $\rho_+$.

From gradient models, we derived the mean curvature flow, i.e., 
$\mu_{ij}(e) = \mu \de_{ij}$; see \cite{EFHPS}, \cite{EFHPS-2},
\cite{FvMST} and also \cite{KS94}, \cite{Bona}.  
In the unbalanced case for gradient models 
assuming that $D(\rho)\equiv D$ (constant), 
on a shorter time scale such as $O(1/K^{1/2})$, 
Huygens' principle was derived.  In other words, the stronger phase region expands 
with a constant speed; see \cite{FvMST-2}.  The emergence of phase-separating
interface continuum flows in large scale interacting particle systems has been of
long-standing interest; see \cite{DP}, \cite{10}.
See \cite{F16} for other approaches.

\subsection{Outline of the article} 
\label{sec:1.6}

The article is organized as follows. In Section \ref{Section 2}, following
\cite{FUY}, we introduce a local equilibrium state $\psi_t(\eta)$ of second
order approximation with a leading order term determined by the hydrodynamic
equation \eqref{1.6}.  Then we prove that Theorem \ref{Theorem 1.1} is shown
once one can prove the bound $N^{-d}H(f_t|\psi_t) \le CN^{-\k}$, $t\ge 0$ for some
$C, \k>0$ for the relative entropy per volume of the density $f_t(\eta)=f_t^N(\eta)$ of 
the distribution of the process $\eta^N(t)$ with respect to $\psi_t(\eta)$
with a properly chosen second order term $F(\eta)$.
This bound is formulated in Theorem \ref{Corollary 2.1}.

Thus, our goal is to prove Theorem \ref{Corollary 2.1}, i.e.\ the entropy bound
$H(f_t|\psi_t) \le CN^{d-\k}$.  We start the proof in Section \ref{Section 3}.
It is concluded in Section \ref{sec:3.7}.
Unlike in \cite{FUY}, our model has the Glauber part and it
includes a diverging factor $K=K(N)$.  Moreover, in order to establish 
a quantitative hydrodynamic limit, we need to derive fine
error estimates at every step of the proof.
The proof of Theorem \ref{Corollary 2.1} is outlined in Section \ref{sec:2.3-B}
to clarify the main steps of the arguments. 
It is also provided here below.

In Section \ref{Section 3},
we first calculate the time derivative of $H(f_t|\psi_t)$ for a general $\psi_t$ 
along with careful error estimates; see Lemma \ref{Lemma 3.1}.  
Then, we formulate the refined one-block estimate,
that is, the replacement of microscopic functions (even those with a diverging
factor $K$) by their ensemble averages, providing fine error estimates; 
see Proposition \ref{One-block}.  

Sections \ref{sec:3.3} and \ref{sec:5} are the core of the non-gradient and
quantitative method.
In Section \ref{sec:3.3}, we show the Boltzmann-Gibbs principle for
the gradient replacement, by which one can replace a microscopic function 
that looks of diverging
order $O(N)$ but with vanishing ensemble average by a linear function of $\eta_x$
of gradient form with a properly chosen coefficient; see Theorem \ref{Theorem 3.2}.
This theorem, which gives good error estimates for the replacement, is proven
by using a key lemma stated as Lemma \ref{Lemma 3.3} 
(sometimes called Kipnis-Varadhan estimate or It\^o-Tanaka trick) and
the decay estimates for the so-called CLT (central limit theorem)
variances given in Section
\ref{sec:5}.  We then need Lemma \ref{Lemma 3.4} to replace the linear function
of $\eta_x$ of gradient form with a function of order $O(1)$.

Section \ref{sec:5} concerns the CLT variances related to the gradient
replacement and deals with quantities determined only by the localized Kawasaki
generator and under the Bernoulli measures $\{\nu_\rho\}_\rho$.  
In particular, the Glauber part plays no role in this section.   
For the decay rate for the CLT variance of the term $A_\ell$ (a linear function
of $\eta_x$ of gradient form), we have Theorem \ref{Theorem 5.1} 
as a refinement of Theorem 5.1 of \cite{FUY}.
This theorem and Proposition \ref{prop:5.4}, which provides a decay estimate 
for a minimizing sequence for the variational formula \eqref{1.3},
are the key in the quantitative gradient replacement and are obtained by 
applying new results from \cite{FGW}.
We note that Theorem 5.1 of \cite{FUY}
gave only the convergence without rate and it was shown based on the so-called
characterization of the closed forms originally due to Varadhan \cite{11};
cf.\ \cite{5}.  However, this is not sufficient for our purpose.  Instead, we use
the results of \cite{FGW}, which were shown inspired by the recent progress
in quantitative homogenization theory.  Proposition \ref{Proposition 5.1},
which calculates the CLT variances for other terms, is 
a refinement of Proposition 5.1 of \cite{FUY}.  
The results of this section are used in the proof of Theorem \ref{Theorem 3.2}.

In Section \ref{section:6}, we first summarize
the estimates obtained so far in Lemma \ref{Lemma 3.2}.
To provide a further bound for an expectation under $f_t$ (which is an unknown
distribution) by that under $\psi_t$ (which is a well-understood distribution),
we apply the entropy inequality.  Next, we are required to show the large
deviation type upper bound under $\psi_t$ with fine error estimates (see 
Lemma \ref{Theorem 3.3}) and, as its consequence, we obtain
Proposition \ref{Theorem 2.1}.
One can observe that the main term in the estimate in Proposition
\ref{Theorem 2.1} is negligible when the leading term of $\psi_t$ is determined
according to the hydrodynamic equation \eqref{1.6}; see Lemma \ref{Lem:2.3}.
The equation \eqref{1.6} is used only for this lemma.  

In Section \ref{sec:6}, the choices of  the second order term $F_N=
\Phi_{n(N)}$ of $\psi_t$ and the mesoscopic scaling parameters $n=n(N)$ and
$\ell=\ell(N)$ are given.  We apply multi-scale analysis
with different scaling parameters chosen as $1\ll n \ll \ell \ll N$.
We use Proposition \ref{prop:5.4} to determine $\Phi_n$ for $F_N$.
We have the convergence rate of the diffusion matrix
approximated from the  finite volume, which is shown to be uniform in the density,
and an $L^\infty$-estimate on the local function $\Phi_n$ with support size $n$,
which is a minimizing sequence for the variational formula \eqref{1.3} in $F$.  
The possible range of $K=K(N)$: $K \le \overline{K}_\de(N)=\de \log N$,
is determined in this section.  The Schauder estimates for the solution of
the hydrodynamic equation \eqref{1.6} are used essentially only in this section.

In Section \ref{sec:3.7}, we summarize all these results and conclude the 
proof of Theorem \ref{Corollary 2.1}.

Section \ref{sec:4-C} gathers the proofs of lemmas and propositions,
postponed from Sections \ref{Section 3}--\ref{section:6}:  
Lemma \ref{Lemma 3.1} (calculation of $\partial_t h_N(t)$), 
Proposition \ref{One-block} (refined one-block estimate),
Lemma \ref{Lemma 3.3} (a key lemma), 
Lemma \ref{Lemma 3.4} (rewriting $O(N)$-looking term to $O(1)$),
Proposition \ref{Proposition 5.1} (CLT variance)
and Lemma \ref{Lem:2.3} (showing $g(t)\le 0$ for small $\de>0$).
These are shown based on known results with some modifications
and refinements, providing careful error estimates.  For example,
for Proposition \ref{One-block}, we combine
the argument in \cite{FvMST} and the equivalence of ensembles
with a precise convergence rate.  For Proposition \ref{Proposition 5.1},
we provide a decay estimate for the error terms in the CLT variances,
which was not given in \cite{FUY}.  We also use the equivalence
of ensembles with a precise convergence rate.  
Careful error estimates are found also in \cite{JM} for related models.

Sections \ref{sec:Schauder}, \ref{sec:8}, \ref{sec:9} and \ref{Section9} are
complementary and are devoted to the proofs, respectively, of 
the Schauder estimates, the comparison
theorem, the non-degeneracy of the diffusion matrix $D(\rho)$ and to 
giving examples of the rates $\{c_{x,y}\}$ and $\{c_x\}$ that satisfy
our assumptions for $D(\rho)$ and $f(\rho)$.

In Appendices, though they are well-known or shown by elementary calculations,
for the sake of completeness, we show  the $r$-Markov property used 
in the proof of Lemma \ref{Lemma 3.1} in Appendix A and 
the large deviation error estimate stated in Lemma \ref{Theorem 3.3}
in Appendix B
for the local equilibrium state $\psi_t$ of second order approximation
defined by \eqref{eq:2psit}.

\subsection{Discussion}

The restriction $K(N)\le \de \log N$ for the diverging speed of $K(N)$
is technical and caused when we apply Gronwall's inequality at the end
of the proof in Section \ref{sec:3.7}; see also \eqref{eq:overlineK}.
This condition for $K(N)$ is certainly not the best.  When the Kawasaki part
is the simple exclusion, \cite{KS94} showed a similar result for
$K$ such that $K(N) = N^{2\zeta^*/(1+\zeta^*)}\times o(1)$ (as $N\to\infty$)
for a certain constant $\zeta^*\in (0,1)$ and they derived the mean curvature flow
for $\Ga_t$ globally in time interpreted in the viscosity sense
beyond the time that a geometric singularity appears in $\Ga_t$.  
(Their scaling parameters $\e$ and $\la=\la(\e)$ are related to our
$N$ and $K=K(N)$ as $\e=K^{1/2}/N$ and $\la=K^{-1/2}$.)
Their method for the particle system seems difficult to apply to our model.

We consider the Kawasaki part which has the Bernoulli product measures
as its reversible measures.  This corresponds to an infinite temperature model.
The hydrodynamic limit for the Kawasaki dynamics of non-gradient type,
which has the canonical Gibbs measures as its reversible measures, was
studied in \cite{VY}.  It is natural to study our problem for such dynamics 
added the Glauber part.  To do this, we need to extend the results of
\cite{FGW} to the Kawasaki dynamics associated with the canonical
Gibbs measures.  Currently, this extension is an open and difficult problem.

The case of an unbalanced reaction term was studied in \cite{FvMST-2}
when the Kawasaki part is simple.  It is natural to generalize this to the
non-gradient Kawasaki case, but this was nontrivial even for the gradient
Kawasaki part with a speed change.  The reason is that we have a PDE
problem: we need to study the sharp interface limit for an unbalanced
reaction term and nonlinear diffusion.  This remains unsolved.

\section{Proof of Theorem \ref{Theorem 1.1}}  \label{Section 2}

\subsection{Formulation of Theorem \ref{Corollary 2.1}: estimate on the 
relative entropy}  \label{Section 2.1}

To prove Theorem \ref{Theorem 1.1}, we apply the relative entropy 
method as in \cite{FUY} comparing the distribution of $\eta^N(t)$ with a
properly taken local equilibrium state 
 of second order approximation.  Its leading term is
determined from the solution $\rho_K(t,v)$ of the hydrodynamic equation 
(\ref{1.6}).  The second order term is determined by $F=F(\eta)$ which
appears in the variational formula \eqref{1.3}, and plays a similar role to
the corrector in the theory of homogenization.  Note that, for gradient
models, the second order approximation is unnecessary; see
\cite{EFHPS}, \cite{FvMST}, \cite{FvMST-2}.
 
Given a function  $\la = \la(t,v)\in C^{1,3}([0,T]\times\T^d)$ 
and a function $F=F(\eta) \in\mathcal{F}_0^d$, we define a local equilibrium 
state  $\psi_t(\eta)d\nu^N$  of second order approximation by
\begin{align}  \label{eq:2psit}
\psi_t(\eta) & \equiv \psi_{\la(t,\cdot),F}(\eta)  \\
& = Z_t^{-1} \exp\bigg\{ \sum_{x\in\T_N^d} \la(t,x/N)
\eta_x + \frac1{N} \sum_{x\in\T_N^d} \left(\partial\la(t,x/N), \tau_x F(\eta)
\right)\bigg\},   \notag
\end{align}
for  $\eta\in\mathcal{X}_N$,
where  $Z_t = Z_{\la(t,\cdot),F}$  is the normalization constant 
with respect to $\nu^N\equiv \nu_{1/2}^N$ and  $\partial \la \equiv 
\partial_v \la= \{\partial_{v_i} \la\}_{1\le i \le d}$.
We also write  $\dot{\la} = \partial \la/\partial t$, $\partial^2 \la \equiv 
\partial_v^2 \la= \left\{ \partial_{v_i}\partial_{v_j}\la \right\}_{1 \le i,j \le d}$ and 
$\partial^3 \la \equiv \partial_v^3 \la= \left\{ \partial_{v_i}\partial_{v_j}\partial_{v_k}
\la \right\}_{1 \le i,j,k \le d}$ for  $\la = \la(t,v)$.   

Denote by  $f_t(\eta)=f_t^N(\eta)$  the density of the 
distribution of  $\eta^N(t)$  on  $\mathcal{X}_N$  with respect to  $\nu^N$  
and consider the relative entropy per volume defined by
\begin{equation}
h_N(t) = N^{-d} H(f_t|\psi_t).     \label{2.2}
\end{equation}
We will show that $h_N(t)\le CN^{-\k}$, i.e.\ $H(f_t|\psi_t)\le CN^{d-\k}$,
$t\in [0,T]$  with some $C=C_T>0$
and $\k>0$ by choosing $\la(t,\cdot)$ and $F$ in $\psi_t = \psi_t^N(\eta)$
properly as in the following theorem.

\begin{thm}  \label{Corollary 2.1}
Let $\de=\de_T=\frak{c}/T>0$ with sufficiently small $\frak{c}>0$, and
let functions $F_N=F_N(\eta) \in \mathcal{F}_0^d$ 
be chosen as in Proposition \ref{prop:5.4} below, namely, $F_N=\Phi_{n(N)}$
with $\Phi_n$ satisfying \eqref{eq:Fn} (which provides the convergence rate
for a minimizing sequence $\{\Phi_n\}_n$ for the variational problem \eqref{1.3})
and taking $n(N) =[N^{a_1}]$  (i.e.\ the integer part of $N^{a_1}$)
with $a_1$ chosen sufficiently small such that
$a_1\in (0,1/(2d+5))$; cf.\ \eqref{eq:a1a2}.

Suppose a sequence $K=K(N) \to\infty \, (N\to\infty)$ is given
that satisfies $1\le K \le \overline{K}_\de(N)=\de \log N$ with $\de>0$ chosen
as above.  Determine $\la(t,v) \equiv \la_K(t,v) := \bar{\la}(\rho(t,v))$ 
from the solution  $\rho(t,v)= \rho_K(t,v)$  of the hydrodynamic equation 
(\ref{1.6}) with $K=K(N)$ and the initial value $\rho_0$ satisfying
\eqref{eq:1.12}; recall \eqref{eq:2.4} for $\bar\la(\rho)$.

Consider $h_N(t)$ defined by \eqref{2.2} with $\psi_t = \psi_{\la(t,\cdot),F_N}$
in \eqref{eq:2psit} taking $\la(t,\cdot)$ and $F_N$ as above, and assume that
$h_N(0) \le C_1N^{-\k_1}$ for some $C_1>0$ and $\k_1>0$.  

Then, there exist some
$C=C_T>0$ and $\k>0$ such that $h_N(t) \le CN^{-\k}$ holds for every $t \in [0,T]$.
\end{thm}

The proof of Theorem \ref{Corollary 2.1} will be given in Section \ref{sec:3.7}
starting from Section \ref{Section 3}.  We will calculate the time derivative 
of $h_N(t)$, in which derivatives of $\la_K(t,v)$ appear.
For the solution $\rho=\rho_K(t,v)$ of the hydrodynamic equation \eqref{1.6}
with smooth coefficients $D$ and $Kf$, we have the Schauder estimates
in terms of $K\ge 1$:
\begin{equation}\label{eq:Schauder}
\|\partial^k\rho\|_\infty, \; \|\partial_t\rho\|_\infty,\;
\|\partial \partial_t\rho\|_\infty  \le CK^{\a_0},
\quad k=1,2,3,
\end{equation}
for some $\a_0>0$ and $C=C_T>0$, where 
$\|\cdot\|_\infty = \|\cdot\|_{L^\infty([0,T]\times \T^d)}$.  These estimates
are shown by applying the results in \cite{Li96}; see Section \ref{sec:Schauder}
for details.  In particular, for $\la=\la_K(t,v)=\bar\la(\rho_K(t,v))$, 
recalling the definition \eqref{eq:2.4} of $\bar\la$ and noting that $\rho_K(t,v)$ is
uniformly away from $0$ and $1$ by Lemma \ref{lem:max} (the comparison
theorem) below, we have
\begin{align}  \label{eq:3-Schauder}
\|\partial\la\|_{3,\infty}, \;  \|\dot{\la}\|_\infty, \;
\|\partial \dot{\la}\|_\infty \le C K^{\a_0}
\end{align}
by changing $C=C_T>0$ (depending on $\|\la\|_\infty$), where we set
$$
\|\partial\la\|_{3,\infty} := \|\partial\la\|_\infty + \|\partial^2\la\|_\infty
 + \|\partial^3\la\|_\infty.
$$
To show \eqref{eq:3-Schauder} from \eqref{eq:Schauder},
note that for the function $\bar\la = \bar\la(\rho)$ 
defined by \eqref{eq:2.4} we have
\begin{align}  \label{eq:2.5-C}
\frac{\partial\bar\la}{\partial\rho}= \frac1{\chi(\rho)}, \quad
\frac{\partial^2\bar\la}{\partial\rho^2}= \frac{-1+2\rho}{\chi(\rho)^2}, \quad
\frac{\partial^3\bar\la}{\partial\rho^3}= \frac{2(1-3\rho+3\rho^2)}{\chi(\rho)^3}, 
\end{align}
where $\chi(\rho)=\rho(1-\rho)$ was introduced in \eqref{1.4}.

The Schauder estimates \eqref{eq:Schauder}, \eqref{eq:3-Schauder}
are actually used only in Section \ref{sec:6} except for \eqref{eq:2.6-a}
and \eqref{eq:2.12-A} below. 
But, as these estimates suggest and also as $e^{Kt/\de_*}$ appears in
\eqref{eq:3.hNt} after 
applying Gronwall's inequality at the end of the proof in Section \ref{sec:3.7},
at all steps in the proof of Theorem
\ref{Corollary 2.1}, we need to obtain error estimates that are strong enough 
to control diverging factors caused by  $K$.  This is used to show
the quantitative version of the hydrodynamic limit as well.

Note that, as the hydrodynamic equation, a discrete PDE was used in
\cite{EFHPS}, \cite{FvMST}, \cite{FvMST-2} to ensure an exact cancellation 
for the leading term in the entropy computation, but here we use a continuous 
PDE \eqref{1.6}, since our
error estimates are strong enough to cover the difference.

\subsection{Proof of Theorem \ref{Theorem 1.1}}
\label{Section 2.2}

The proof of Theorem \ref{Theorem 1.1} is given based on Theorem
\ref{Corollary 2.1}, i.e.\ the bound $H(f_t|\psi_t) \le C N^{d-\k}$ for the
relative entropy with respect to $\psi_t=\psi_{\la_K(t,\cdot),F_N}$,
where $\la_K(t,\cdot)$ and $F_N$ are chosen as in the statement of
Theorem \ref{Corollary 2.1}, combining with the entropy inequality 
and the concentration inequality called Hoeffding's inequality.  
We state two lemmas.  In this section,
we assume the assumptions stated in Theorem \ref{Theorem 1.1}.

We will sometimes identify $\psi_t=\psi_{\la_K(t,\cdot),F_N}$, 
$\psi_{t,0}=\psi_{\la_K(t,\cdot),0}$ and $f_t$ with $\psi_t \, d\nu^N$, 
$\psi_{t,0}d\nu^N$ and $f_t \, d\nu^N$, denoted by $P^{\psi_t}$, $P^{\psi_{t,0}}$ 
and $P^{f_t}$ (probability measures on $\mathcal{X}_N$), and the expectations under
them by $E^{\psi_t}$, $E^{\psi_{t,0}}$ and $E^{f_t}$, respectively.

\begin{lem} \label{lem:Add-2}
For every $T>0$, there exist $C=C_T>0$ and $\k>0$ such that
$$
\sup_{t\in [0,T]}
E\Big[|\lan \rho^N(t),\phi\ran- \lan \rho_K(t),\phi\ran_N|^2\Big]
\le C \|\phi\|_\infty^2 N^{-\k},
$$
where
\begin{equation}  \label{eq:2rhophiN}
\lan \rho_K(t),\phi\ran_N := \frac1{N^d} \sum_{x\in \T_N^d}
\rho_K(t,x/N) \phi(x/N).
\end{equation}
\end{lem}

\begin{proof}
For every $\de>0$, by the entropy inequality, we have
\begin{equation}  \label{eq:3-ent-Q}
E\Big[|\lan \rho^N(t),\phi\ran- \lan \rho_K(t),\phi\ran_N|^2\Big] 
\le \frac{1}{\de N^d} \log E^{\psi_{t,0}}\Big[ e^{\de X_N^2}\Big] 
+ \frac{1}{\de N^d}  H_N(f_t|\psi_{t,0}),
\end{equation}
where $\psi_{t,0}= \psi_{\la(t,\cdot),0}$ with $\la(t,\cdot)=\la_K(t,\cdot)$,
$F=0$ and $X_N$ is defined under $\psi_{t,0} d\nu^N$ by
\begin{align*}
X_N :=  N^{d/2} \{ \lan \rho^N,\phi\ran- \lan \rho_K(t),\phi\ran_N\}
=  N^{-d/2} \sum_{x\in \T_N^d} \zeta_x \phi(x/N),
\end{align*}
and $\zeta_x := \eta_x -\rho_K(t,x/N)$.

To estimate the entropy in \eqref{eq:3-ent-Q}, 
we compare $\psi_t = \psi_{\la(t,\cdot),F_N}$ and $\psi_{t,0}=\psi_{\la(t,\cdot),0}$
recalling \eqref{eq:2psit}:
$$
\psi_{\la(t,\cdot),F_N} = \frac{Z_{\la(t,\cdot),0}}{Z_{\la(t,\cdot),F_N}}
e^{R(\eta)} \psi_{\la(t,\cdot),0}, 
$$
where
$$
R(\eta) = \frac1N \sum_{x\in \T_N^d} \big( \partial \la(t,x/N), \t_x F_N(\eta)\big).
$$
We have a simple bound
\begin{align}  \label{eq:2.6-a}
|R(\eta)| \le N^{d-1} \|\partial\la(t)||_\infty \|F_N\|_\infty
  \le C_2 N^{d-\k_2}, 
\end{align}
for some $0<\k_2<1$,
by the Schauder estimates \eqref{eq:3-Schauder}, $K\le \overline{K}(N)$
 and Proposition \ref{prop:5.4} noting
that $F_N = \Phi_{n(N)}$ with $n(N)=[N^{a_1}]$, $0<a_1<1/(2d+5)$
(as in the statement of Theorem \ref{Corollary 2.1})  and
$$
0< \frac{Z_{\la(t,\cdot),0}}{Z_{\la(t,\cdot),F_N}} \le e^{\|R\|_\infty}.
$$
Thus, in \eqref{eq:3-ent-Q}, we can estimate as
\begin{align}  \label{eq:1.3-Q}
H_N(f_t|\psi_{t,0}) & = \int \log \frac{f_t}{\psi_{t,0}} \, f_t d\nu^N \\
& = \int \Big( \log \frac{f_t}{\psi_t} + \log \frac{\psi_t}{\psi_{t,0}} \Big) f_t d\nu^N
   \notag  \\
& \le H_N(f_t|\psi_t) + 2 \|R\|_\infty
 \le H_N(f_t|\psi_t) + 2C_2 N^{d-\k_2}.
\notag
\end{align}

For $H_N(f_t|\psi_t)$, by Theorem \ref{Corollary 2.1},
we have $H_N(f_t|\psi_t) \le CN^{d-\k}$, $t\in [0,T]$ with $C=C_T>0$, $\k>0$.
Indeed, the condition for the relative entropy at $t=0$ in Theorem 
\ref{Theorem 1.1} implies that for $h_N(0)$ in Theorem \ref{Corollary 2.1},
as we see 
\begin{align}\label{eq:2.6-a-Q}
|H(f_0|\psi_0) - H(f_0|\psi_{0,0})| 
  & = \Big|\int f_0 \log(\psi_{0,0}/ \psi_0) \, d\nu^N \Big| 
  \le 2 C_2 N^{d-\k_2},
\end{align}
which is shown similarly to \eqref{eq:1.3-Q} taking $t=0$.
Thus, the assumption $h_N(0) \le \bar C_1N^{-\bar\k_1}$ of Theorem 
\ref{Corollary 2.1} (where we write $\bar\k_1, \bar C_1$ instead of $\k_1, C_1$
to distinguish them from the constants in Theorem \ref{Theorem 1.1})
holds by taking $\bar \k_1 = \k_1 \wedge \k_2>0$ and 
$\bar C_1 = C_1 \vee (2C_2)>0$,
where $\k_1, C_1$ are the constants in Theorem \ref{Theorem 1.1}.

On the other hand, for the first term in \eqref{eq:3-ent-Q},
since $\{\zeta_x\}$ are independent and mean $0$
under $\psi_{t,0} d\nu^N$, by Hoeffding's inequality (see Theorem 2.8 in
\cite{BLM}, take $N^{d/2}s$ for $t$ and note $|\bar \eta_x\phi(\tfrac{x}N)|
\le \|\phi\|_\infty$), we obtain that
$$
P^{\psi_{t,0}}\big(|X_N|\ge s\big) \le 2 e^{-s^2/(2\|\phi\|_\infty^2)},
$$
for all $s>0$ and $\phi\not\equiv 0$.  Thus, we have
\begin{align}  \label{eq:1.5-Q}
E^{\psi_{t,0}}\Big[e^{\de X_N^2}\Big]
& = \sum_{i=0}^\infty E^{\psi_{t,0}} 
\Big[e^{\de X_N^2}; i \le |X_N|< i+1\Big]  \\
& \le \sum_{i=0}^\infty e^{\de (i+1)^2} P^{\psi_{t,0}}\big(|X_N|\ge i\big)  \notag  \\
& \le \sum_{i=0}^\infty e^{\de (i+1)^2}  2 e^{-i^2/(2\|\phi\|_\infty^2)}
=: C_3 <\infty,  \notag
\end{align}
uniformly for $\phi: \|\phi\|_\infty=1$, by taking $\de=1/4$.

Summarizing \eqref{eq:3-ent-Q} taking $\de = 1/4$, \eqref{eq:1.3-Q} and
$H_N(f_t|\psi_t) \le C N^{d-\k}$, we have
\begin{equation*}
E\Big[|\lan \rho^N(t),\phi\ran- \lan \rho_K(t),\phi\ran_N|^2\Big] 
\le \frac{4\log C_3}{N^d} + 4 \big(C N^{-\k} 
+ 2 C_2 N^{-\k_2} \big) 
\le C_4 N^{-\k\wedge \k_2},
\end{equation*}
where the constant $C_4 = C_{4,T}>0$ is uniform in $\phi$ such that
$\|\phi\|_\infty= 1$. 

For general $\phi (\not\equiv 0)$, taking $\phi/\|\phi\|_\infty$ for
$\phi$, we obtain
\begin{equation*}
E\Big[|\lan \rho^N(t),\phi\ran- \lan \rho_K(t),\phi\ran_N|^2\Big] 
\le C_4 \|\phi\|_\infty^2 N^{-\k},
\end{equation*}
by writing $\k\wedge\k_2$ by $\k>0$ again.
This concludes the proof of the lemma.
\end{proof}

Next, for $\b\in (0,1]$, let us consider the H\"older norms 
$\|g\|_{C^\b} = \|g\|_{C^\b(\T^d)}$ of a function $g$ on $\T^d$:
\begin{equation*}  
\|g\|_{C^\b} := \|g\|_\infty + \sup_{v_1\not=v_2} \frac{|g(v_1)-g(v_2)|}{|v_1-v_2|^\b},
\end{equation*}
where $|v|\in [0,\sqrt{d}/2]$ is  the Euclidean norm
defined for $v\in \T^d$ modulo $1$ in each component.
Later, we will take $\rho_K(t,\cdot)$ for $\rho(\cdot)$ in the next lemma.

\begin{lem} \label{lem:Add-3}
For every $\b\in (0,1]$ and $\rho=\rho(\cdot)$, $\phi=\phi(\cdot)\in C^\b(\T^d)$,
we have
$$
|\lan \rho,\phi\ran_N -\lan \rho,\phi\ran|
 \le d^{\b/2} (2N)^{-\b} \|\rho\cdot\phi\|_{C^\b(\T^d)}
$$
Recall \eqref{eq:2rhophiN} for $\lan \rho,\phi\ran_N$ and
$$
\lan \rho,\phi\ran = \int_{\T^d} \rho(v)\phi(v)dv.
$$
\end{lem}

\begin{proof}
The proof is easy:
\begin{align*}
| \lan \rho,\phi\ran_N -\lan \rho,\phi\ran |
& = \bigg| \sum_{x\in \T_N^d} \int_{B(x/N,1/N)} \Big(
\rho(x/N) \phi(x/N) -\rho(v)\phi(v)\Big) dv \bigg| \\
& \le \sum_{x\in \T_N^d}  N^{-d} \|\rho\cdot\phi\|_{C^\b(\T^d)} 
\sup_{v\in B(x/N,1/N)} \big| v - x/N\big|^\b\\
& \le \big(\sqrt{d}/2N\big)^\b \|\rho\cdot\phi\|_{C^\b(\T^d)},
\end{align*}
where $B(x/N,1/N)$ is the box in $\T^d$ with center $x/N$ and side length 
$1/N$.
\end{proof}

We are at the position to give the proof of Theorem \ref{Theorem 1.1}.
For the sake of completeness, 
let us define the Sobolev spaces $H^{\a} \equiv H^{\a}(\T^d)$ on $\T^d$
for $\a\in \R$.  We consider the Laplacian $\De$ on $\T^d$.  Its 
normalized eigenfunctions are products of 
$$
\{\sqrt{2} \sin\pi n_iv_i, 1, \sqrt{2}\cos\pi m_iv_i\}_{n_i,
m_i = 1,2,\ldots}
$$
choosing one for each $i=1,2,\ldots,d$.  We denote them by $e_{\bf n}$
for ${\bf n} = (n_1,\ldots,n_d) \in \Z^d$, i.e., we choose
$\sqrt{2} \sin\pi n_iv_i$, $1$ or $\sqrt{2}\cos\pi (-n_i)v_i$
for $i$ according to $n_i>0$,  $n_i=0$ or $n_i< 0$, respectively,
and make product to define  $e_{\bf n}(v), v\in \T^d$.
Then, $e_{\bf n}$ is an eigenfunction of $-\De$ with
eigenvalue $\la_{\bf n} := \pi^2|{\bf n}|^2 \equiv \pi^2 \sum_{i=1}^d n_i^2$, 
i.e.\ $-\De e_{\bf n} = \la_{\bf n} e_{\bf n}$.  Moreover,
$\{e_{\bf n}\}_{{\bf n}\in \Z^d}$ forms a complete orthonormal system
of $L^2(\T^d)$. 

Define the norm $\|g\|_{H^\a}$ first for $g\in C^\infty(\T^d)$ by
\begin{align}\label{eq:Ha}
\|g\|_{H^{\a}}^2 := \lan (-\De+1)^{\a}g,g\ran
\equiv \sum_{{\bf n}\in \Z^d} (\la_{\bf n}+1)^{\a} \lan g, e_{\bf n}\ran^2,
\end{align}
where $\lan\cdot,\cdot\ran$ denotes the inner product of $L^2(\T^d)$.
The Sobolev space $H^\a(\T^d)$ is the completion of $C^\infty(\T^d)$
under the norm $\|\cdot\|_{H^\a}$.  Note that discrete measures on $\T^d$
such as $\rho^N(t)$ are elements of $H^{-\a}(\T^d)$ for $\a>d/2$,
but not for $a\le d/2$ in general.  For example, we easily see
$\|\de_0\|_{H^{-\a}}=\infty$ for $\a\le d/2$.

\begin{proof}[Proof of Theorem \ref{Theorem 1.1}]
By \eqref{eq:Ha}, one can rewrite as
\begin{align}  \label{eq:2.10-A}
E\Big[\|\rho^N(t)-\rho_K(t)\|_{H^{-\a}}^2\Big]
= \sum_{{\bf n}\in \Z^d} (\la_{\bf n}+1)^{-\a} 
E\Big[\lan \rho^N(t)-\rho_K(t), e_{\bf n}\ran^2\Big].
\end{align}
Then, we estimate as
\begin{align*}
E\Big[\lan \rho^N(t)-\rho_K(t), e_{\bf n}\ran^2\Big]
\le & 2 E\Big[\big( \lan \rho^N(t), e_{\bf n}\ran-
\lan \rho_K(t), e_{\bf n}\ran_N\big)^2\Big] \\
&+ 2 \big( \lan \rho_K(t), e_{\bf n}\ran_N-
\lan \rho_K(t), e_{\bf n}\ran\big)^2.
\end{align*}
For the first term, by Lemma \ref{lem:Add-2} and noting 
$\| e_{\bf n}\|_\infty \le (\sqrt{2})^d$
for all ${\bf n}$, we see
\begin{align}  \label{eq:thm1-1}
& \sum_{{\bf n}\in \Z^d} (\la_{\bf n}+1)^{-\a} 
E\Big[\big( \lan \rho^N(t), e_{\bf n}\ran-
\lan \rho_K(t), e_{\bf n}\ran_N\big)^2\Big]  \\
& \quad \le C_1 N^{-\k} \sum_{{\bf n}\in \Z^d} (\la_{\bf n}+1)^{-\a} 
\le C_2 N^{-\k},  \notag
\end{align}
if $\a>d/2$ with $C_2=C_{2,T,\a}>0$, $\k>0$.  Note that the sum is bounded by 
$C_3 \big(\int_{\R^d} (\pi^2 |x|^{2}+1)^{-\a}dx +1\big)
\le C_4\big(\int_1^\infty r^{-2\a} r^{d-1}dr +1\big)< \infty$ if $\a>d/2$.

Next, for the second term, by Lemma \ref{lem:Add-3}, we have
\begin{align*}
& \sum_{{\bf n}\in \Z^d} (\la_{\bf n}+1)^{-\a} 
\big( \lan \rho_K(t), e_{\bf n}\ran_N-
\lan \rho_K(t), e_{\bf n}\ran\big)^2 \\
& \quad \le  \sum_{{\bf n}\in \Z^d} (\la_{\bf n}+1)^{-\a} d^\b (2N)^{-2\b }
\|\rho_K(t)e_{\bf n}\|_{C^\b(\T^d)}^2,
\end{align*}
for every $\b\in (0,1]$.
However, since $\|\rho_K(t)e_{\bf n}\|_{C^\b(\T^d)} \le 
\|\rho_K(t)\|_{C^\b(\T^d)} \|e_{\bf n}\|_{C^\b(\T^d)}$ and
$$
\|e_{\bf n}\|_{C^\b(\T^d)}\le \|e_{\bf n}\|_\infty
+ \|\nabla e_{\bf n}\|_\infty^\b (2 \|e_{\bf n}\|_\infty)^{1-\b}
\le C_5(|{\bf n}|^\b +1), 
$$
the above series is bounded by
\begin{align}  \label{eq:second}
C_6 N^{-2\b }\|\rho_K(t)\|_{C^\b(\T^d)}^2
\sum_{{\bf n}\in \Z^d} (\la_{\bf n}+1)^{-\a} 
(|{\bf n}|^\b +1)^2.
\end{align}
Note that the last series converges if $\a> \b+d/2$.  

Since $\|\rho_K(t)\|_{C^\b}\le \|\rho_K(t)\|_{C^1}
\le C_\ga N^{\ga}$ for arbitrary small $\ga>0$ by the Schauder estimates
\eqref{eq:Schauder} and $K\le \overline{K}_\de(N)$, \eqref{eq:second} is bounded by
\begin{align}  \label{eq:2.12-A}
C_7 N^{-2\b + 2\ga}
\end{align}
for any $\b\in (0,1]$ such that $\b < \a-d/2$.  However, by our assumption
$\a>d/2$, we can find $\b>0$ small enough satisfying this condition and then we take
$\ga\in (0,\b)$.  Thus, from \eqref{eq:2.10-A}, \eqref{eq:thm1-1},
\eqref{eq:second} and \eqref{eq:2.12-A}, we obtain the conclusion 
of Theorem \ref{Theorem 1.1}  by taking
$\k\wedge(\a-d/2)\wedge 1$ (with $\k$ in Lemma \ref{lem:Add-2})
for $\k=\k_\a$ in this theorem. (We can take $2((\a-d/2)\wedge 1)-\e$
for arbitrary small $\e>0$ instead of $(\a-d/2)\wedge 1$.)
\end{proof}

\subsection{Toward the proof of Theorem \ref{Corollary 2.1}}
\label{sec:2.3-B}

The goal is now to prove Theorem \ref{Corollary 2.1}, which states
$h_N(t) \le CN^{-\k}$, $t\in [0,T]$ for some $C=C_T>0$ and $\k>0$.
We will begin the proof of Theorem \ref{Corollary 2.1} in the next section and
will conclude in Section \ref{sec:3.7}. 
To clarify the main steps of the arguments, let us outline the procedure.

To give the bound on $h_N(t)$, first in Section \ref{Section 3},
we calculate its time derivative $\partial_t h_N(t)$ in
Lemma \ref{Lemma 3.1}.  In this calculation, we obtain two terms
$\Om_1(\eta)$ and $\Om_2(\eta)$.  The term $\Om_1$ is of diverging order $O(N)$, 
while $\Om_2$ is $O(1)$ in $N$ but contains the diverging factor $K$.
We formulate the refined one-block estimate to replace a microscopic
function $\Om$ in general by its ensemble average in Proposition 
\ref{One-block} and Corollary \ref{One-block-cor} for specific $\Om_2$.

The main difficulty lies in treating the diverging term
$\Om_1$.  Sections \ref{sec:3.3} and \ref{sec:5} are the core of the
non-gradient and quantitative method.  We prove the ``gradient replacement"
in Theorem \ref{Theorem 3.2}, which allows one to replace $\Om_1$
with another $O(1)$ term, given as the term involving $P_{ij}$
in Lemma \ref{Lemma 3.4}.
The proof of Theorem \ref{Theorem 3.2} is reduced to the calculation of
the so-called CLT variances via a key lemma, called Kipnis-Varadhan estimate or
It\^o-Tanaka trick, formulated in Lemma \ref{Lemma 3.3} in the
setting of Glauber-Kawasaki dynamics.

The CLT variances are calculated in Section \ref{sec:5}.
In particular, Theorem \ref{Theorem 5.1}, which concerns the term $A_\ell$,
and Proposition \ref{prop:5.4}, which studies the minimizing sequence 
$\{\Phi_n\}$ for the variational formula \eqref{1.3}, play a crucial role
in showing the quantitative result for a non-gradient model.
These are taken from \cite{FGW}.

In Section \ref{section:6}, we derive an integral estimate for $h_N(t)$
in Lemma \ref{Lemma 3.2} as a consequence of the results obtained in Sections 
\ref{Section 3}--\ref{sec:5} and, combining it with the entropy inequality
and the large deviation type upper bound formulated in Lemma \ref{Theorem 3.3},
we obtain the estimate \eqref{2.5} for $h_N(t)$ in Proposition 
\ref{Theorem 2.1}.  We note in Lemma \ref{Lem:2.3} that the main
term $g(t)$ in \eqref{2.5} can be dropped if we choose the function $\la(t,v)$ 
in the local equilibrium $\psi_t$ according to the hydrodynamic equation \eqref{1.6}.

Then, in Section \ref{sec:6}, after summarizing all error estimates, we choose
the parameter $\b$ and the mesoscopic scales $n$ and $\ell$ appearing in these
estimates, and determine the second approximation $F=F_N$ in $\psi_t$ and
find the necessary condition for $K=K(N)$.

The proof of Theorem \ref{Corollary 2.1} is concluded in 
Section \ref{sec:3.7} by applying Gronwall's inequality.
The proofs of several lemmas and propositions are postponed to
Section \ref{sec:4-C} and Appendices.

The procedure is outlined also in Section \ref{sec:1.6}.

\section{Time derivative of $h_N(t)$ and one-block estimate}  
\label{Section 3}

In this section, we first give the estimate on $\partial_t h_N(t)$ with 
a proper error estimate to be
sufficient for our purpose, that is,  to show a quantitative bound
$h_N(t) \le CN^{-\k}$ with $\k>0$; see Section \ref{sec:3.1}.
In this estimate, two terms $\Om_1=\Om_1(\eta)$ and $\Om_2=\Om_2(\eta)$ 
appear.  For the term $\Om_2$, we show a ``refined'' one-block estimate,
that is, the local averaging property due to the local ergodicity.
``Refined'' means that it provides an error estimate sufficient for our purpose
and also properly controls the diverging factor $K$ in $\Om_2$; 
see Section \ref{sec:3.2}.  

On the other hand, the term  $\Om_1$ looks of the order $O(N)$.  
We need the gradient replacement which is the main part in the non-gradient method
and will be studied in Sections \ref{sec:3.3} and \ref{sec:5}.

The results stated in this section, Lemma \ref{Lemma 3.1} and Proposition
\ref{One-block}, are the extensions of known results,
providing fine error estimates, so that the proofs are postponed to Section
\ref{sec:4-C}.  Corollary \ref{One-block-cor} is an application of
Proposition \ref{One-block} to our specific $\Om_2(\eta)$.

\subsection{Time derivative of the relative entropy  $h_N(t)$}
\label{sec:3.1}

Let us begin with the calculation of the time derivative of $h_N(t)$.  
If we take $K=0$ (i.e.\ no Glauber part), the following lemma is in
Lemma 3.1 of \cite{FUY}, but the error estimate given there was of order $o(1)$ 
and it was enough to show $h_N(t)=o(1)$.  For our purpose, this is not
sufficient.  To show $h_N(t) \le CN^{-\k}$ as in  Theorem \ref{Corollary 2.1},
we need to make it finer.  

For $\ell\in \N$ and $x\in \T_N^d$ (or $\Z^d$), define
\begin{align}  \label{eq:3.1-P}
\La_{\ell,x} := \{y\in \T_N^d \;(\text{or }\Z^d); \, |y-x|\le \ell\}
\end{align}
and $\ell_* := 2\ell +1$.  Note that, in case of $\T_N^d$, by embedding it
in $\Z^d$, the $\ell^\infty$-norm $|y-x|= \max_{1\le i \le d}|x_i-y_i|$ 
and therefore $\La_{\ell,x}$ are well-defined
at least if $\ell_*<N$.  Note that $|\La_{\ell,x}|=\ell_*^d$, which is
the volume of $\La_{\ell,x}$.

For $F=F(\eta) \equiv (F_i(\eta))_{i=1}^d \in \mathcal{F}_0^d$,
we set
\begin{align}  \label{eq:3.2-P}
\vertiii{F}_{k,\infty} = r(F)^{d+k} \| F\|_\infty,
\quad k=0, 1,2,
\end{align}
where $r(F) (\ge 1)$ denotes the radius (in $\ell^\infty$-sense, centered at $0$)
of the support of $F=F(\eta)$, that is,
\begin{align}  \label{eq:3rF}
r(F) = \min \big\{r>0; \text{supp } F \subset \La_{r,0}\big\}.
\end{align}

We use an abbreviation $\sum_{x\sim y}$  for  $\sum_{x,y\in\T_N^d: |x-y|=1}$.

\begin{lem} \label{Lemma 3.1}
Assume $\la \in C^{1,3}([0,T]\times\T^d)$ and $F \in\mathcal{F}_0^d$
for $\psi_t=\psi_{\la(t,\cdot),F}$ in \eqref{eq:2psit}.  Then, setting 
$\rho(t,v) := \bar\rho(\la(t,v))$, we have
$$
\partial_t h_N(t) \le E^{f_t}[\Om_1 + \Om_2]
   + N^{-d} \sum_{x\in\T_N^d} \dot{\la}(t,x/N)\rho(t,x/N) + Q_N^{En}(\la,F),
$$
where  $\Om_1 = \Om_1(\eta)$  and  $\Om_2 = \Om_2(\eta)$  are
defined respectively by
\begin{align}  \label{eq:Om1}
\Om_1 = & - \frac{N^{1-d}}2 \sum_{x\sim y} c_{x,y} \Om_{x,y}, 
\\   \label{eq:Om2}
\Om_2 = & - N^{-d} \sum_{x\in\T_N^d} \dot{\la}(t,x/N) \eta_x 
   + \frac{N^{-d}}4 \sum_{x\sim y} c_{x,y} \Om_{x,y}^2   
       \\  \notag
 & -\frac{N^{-d}}4 \sum_{x\sim y} c_{x,y} \sum_{i,j}
     \partial_{v_i}\partial_{v_j} \la(t,x/N) (y_i -x_i)(y_j-x_j)(\eta_y-\eta_x) 
       \\  \notag
 &   + \frac{N^{-d}}2 \sum_{x\sim y} c_{x,y} \sum_{i,j}
     \partial_{v_i}\partial_{v_j}\la(t,x/N)  \pi_{x,y}(\sum_{z\in\T_N^d}(z_j-x_j)
       \tau_z F_i)  \\ \notag
 & + N^{-d} K \sum_{x\in\T_N^d} \Big\{ \frac{c_x^+(\eta)}{\rho(t,x/N)}      
  -  \frac{c_x^-(\eta)}{1-\rho(t,x/N)}\Big\}  \{ \eta_x-\rho(t,x/N)\},       
\intertext{and}  \label{eq:Omxy}
\Om_{x,y} & \equiv \Om_{x,y}(\eta) =    \bigg(\partial\la(t,x/N),
     (y - x)(\eta_y-\eta_x)- \pi_{x,y}\Big(\sum_{z\in\T_N^d}
       \tau_z F\Big)\bigg).
\end{align}
The error term $Q_N^{En}(\la,F)$ has the estimate
\begin{align}  \label{eq:R(la,F)}
|Q_N^{En}(\la,F)| \le &
CN^{-1} \Big(1+ \|\partial\la\|_{3,\infty}\Big)^3
   \Big(1+\vertiii{F}_{2,\infty}\Big)^3 \\
   & + CN^{-1}K e^{\|\la\|_\infty}
   \|\partial\la\|_\infty \vertiii{F}_{0,\infty}   \notag  \\
   & + CN^{-1} \|\partial\dot{\la}\|_\infty \|F\|_\infty + 
   CN^{-1} \|\dot{\la}\|_\infty \|\partial\la\|_\infty\vertiii{F}_{0,\infty},  \notag
\end{align}
as long as $\la$ and $F$ satisfy the bounds
\begin{align}  \label{eq:2.F} 
N^{-1} \|\partial\la\|_\infty \le 1, \quad
N^{-1} r(F)^d \|\partial\la\|_\infty \| F\|_\infty \le 1,
\end{align}
(the constant $1$ in the right-hand side may be replaced
by any other constant).
\end{lem}

The proof is postponed to Section \ref{sec:4.1-C}.

\subsection{Refined one-block estimate}
\label{sec:3.2}

We now formulate the one-block estimate, that is, the error estimate for 
the replacement
of the microscopic function $\Om_2$ of order $O(1)$ (first four terms) and $O(K)$ 
(last term) by its ensemble average with mean $\rho=\bar\eta_x^\ell$ (see 
\eqref{eq:3.saeta} below) under the integral with respect to $P^{f_t}$ and 
also in $t\in [0,T]$.  As we pointed repeatedly, we need to 
prepare an error estimate strong enough for later use.  The classical one-block
estimate can be shown even with the diverging Glauber part as discussed in 
Remark \ref{Remark3.1} below, but it is not sufficient for our purpose.

In general for $\Om\in \mathcal{F}_0$, define the error function
in the replacement of $\t_x\Om$ by its ensemble mean at density
$\bar\eta_x^\ell$ as
$$
\hat{\Om}_x(\eta) \equiv \hat{\Om}_x^\ell(\eta) = \t_x \Om(\eta) - \lan \Om \ran(\bar\eta_x^\ell),
$$
where we denote $\lan \,\cdot\, \ran (\rho)$ for $\lan \,\cdot\, \ran_\rho
= E^{\nu_\rho}[\,\cdot\,]$,
$\rho\in [0,1]$.  Here a local sample average of $\eta$ over $\Lambda_{\ell,x}$
is defined by
\begin{align}  \label{eq:3.saeta}
\bar\eta_x^\ell \equiv \bar\eta_{\Lambda_{\ell,x}}
:= \ell_*^{-d} \sum_{y \in \Lambda_{\ell,x}} \eta_y,
\end{align}
recall \eqref{eq:3.1-P} for $\La_{\ell,x}$ and $\ell_* =2\ell+1\;(<N)$.

\begin{prop}  \label{One-block}
Let $a_{t,x}$, $t \in [0,T]$ and $x \in \T_N^d$, be 
deterministic coefficients which satisfy
\begin{equation} \label{BG:assn:atx}  
  |a_{t,x}| \leq M \quad \text{for all } t \in [0,T], \; x \in \T_N^d.
\end{equation}
We assume
\begin{equation} \label{3.26-A}
K^{1/2} \ell_*^{(d+2)/2} \le \de N
\end{equation}
for some $\de>0$ small enough (the choice of $\de$ is determined in
Lemma \ref{l:L10:6} in the proof of Proposition \ref{One-block} in Section 
\ref{sec:4.2-C})  and for all $N$ large enough.  Then, for every exponent
$a\in (0,1)$ (for \eqref{3.9-C}), there exists  $C=C_{T,a} >0$ such that for 
every $t\in [0,T]$ and $N$ large enough,
\begin{align} \label{One-block:est}
E \bigg[ \bigg |& \int_0^t  N^{-d}
\sum_{x \in \T_N^d} a_{s,x} \hat{\Om}_{x}(\eta^N(s)) ds\bigg | \bigg] \\
& \leq C M (\|\Om\|_\infty +1)\big(N^{-1}K^{1/2} \ell^{(d+2)/2} 
+ \ell^{-d} r(\Om)^d \big),
\notag
\end{align}
if $\Om\in \mathcal{F}_0$ satisfies
\begin{equation} \label{3.9-C}
r(\Om) \le \ell_*^a.
\end{equation}
Recall that $r(\Om)$ denotes the radius of the support of $\Om$ defined by
\eqref{eq:3rF}.
\end{prop}

Proposition \ref{One-block} can be applied for the specific term $\Om_2$ 
obtained in Lemma 
\ref{Lemma 3.1} to replace $\int_0^t E^{f_s}[\Om_2] \, ds$ by the time-integral
of its ensemble average with $\rho=\bar\eta_x^\ell$.  The following corollary
summarizes the error for this replacement.

\begin{cor}  \label{One-block-cor}
For $t\in [0,T]$, the integral $\int_0^t E^{f_s}[\Om_2] \, ds$ can be replaced by
\begin{align}  \label{eq:hatc-D}
\int_0^t E^{f_s} & \bigg[N^{-d} \sum_{x\in\T_N^d} \Big\{ -\dot{\la}(s,x/N)
   \bar{\eta}_{x}^\ell  \\
& \phantom{\sum_{x\in\Ga_N}} 
+ \frac12 \left(\partial\la(s,x/N),
        \widehat{c}(\bar{\eta}_{x}^\ell;F)\partial\la(s,x/N)\right)
        + K \si_G(\bar{\eta}_x^\ell;s,x/N)
           \Big\} \bigg] \, ds,  \notag
\end{align}
where $\widehat c(u;F)$ is defined in \eqref{eq:1.6-Q} and
\begin{align}  \label{eq:sigG}
\si_G(u;t,v) = \bigg( \frac{\lan c^+\ran_u}{\rho(t,v)}      
  -  \frac{\lan c^-\ran_u}{1-\rho(t,v)}\bigg)  \{ u-\rho(t,v)\}.
\end{align}
The error for this replacement is estimated by
\begin{align}  \label{3.Q-Om21}
Q_{N,\ell}^{\Om_2,1}(\la,F):= 
& C\big(N^{-1}K^{1/2}\ell^{(d+2)/2} + \ell^{-d}(1+r(F)^d)\big)  \\
& \times
    \Big( \|\dot{\la}\|_\infty+ \|\partial\la\|_\infty^2 (1+\vertiii{F}_{0,\infty})^2
        + \|\partial^2\la\|_\infty (1+\vertiii{F}_{1,\infty}) +K\Big),
        \notag
\end{align}
provided the function $F\in \mathcal{F}_0^d$ satisfies
\begin{equation} \label{3.12-C}
r(F) + r_c+1 \le \ell_*^a,
\end{equation}
for $a\in (0,1)$ (arbitrarily fixed),
where $r_c>0$ is the range of the rates $c_{x,y}$ and $c_x$ introduced in
Section \ref{Section 1.1}.
In \eqref{3.Q-Om21}, for each $\la_*>0$, the constant $C=C_{T,\la_*}$  can 
be taken uniformly for $\la=\la(t,v)$ such that $\|\la\|_\infty \le \la_*$.
\end{cor}

\begin{proof}
We apply Proposition \ref{One-block} taking each term of $\Om_2$ 
as $\Om$.  
First, note that the ensemble averages of the third and fourth terms of 
$\Om_2$ vanish:
$$
\lan c_{xy}(\eta_x-\eta_y)\ran_\rho=0, \quad \lan c_{xy}\pi_{xy}\Phi\ran_\rho=0
$$
for all $\Phi=\Phi(\eta)\in \mathcal{F}_0$. 

Then, \eqref{3.Q-Om21} is obtained by gathering all errors
given by \eqref{One-block:est} for each term of $\Om_2$.
Indeed, take $M=\|\dot{\la}\|_\infty$ for the first term of $\Om_2$, 
$M(\| \Om\|_\infty +1) \le C \|\partial\la\|_\infty^2 (1+\vertiii{F}_{0,\infty})^2$
for the second term, $M(\| \Om\|_\infty +1) \le C \|\partial^2\la\|_\infty$ 
for the third term, 
$M(\| \Om\|_\infty +1) \le C \|\partial^2\la\|_\infty(1+\vertiii{F}_{1,\infty})$
for the fourth term and $M(\| \Om\|_\infty +1)\le CK$ for the fifth term 
noting that $\rho(t,v)$ is uniformly away from $0$ and $1$:
$e^{-\la_*}/(e^{-\la_*}+1) \le \rho(t,v) \le e^{\la_*}/(e^{\la_*}+1)$
for $\la$ such that $\|\la\|_\infty \le \la_*$.

The condition \eqref{3.12-C} ensures \eqref{3.9-C} for each choice of $\Om$.
In \eqref{3.Q-Om21}, $r(F)^d$ needs to be $(r(F) + r_c+1)^d$, but
this difference is absorbed to the constant $C$ by changing it.
\end{proof}

The proof of Proposition \ref{One-block} is given in Section \ref{sec:4.2-C}
as a combination of a known estimate adjusted to our setting, which is formulated
in Lemma \ref{l:L10:6}, and the equivalence of ensembles with precise
convergence rate \eqref{eq:3EE}.
Proposition \ref{One-block} is similar to Theorem 1.4 of \cite{FvMST} but, 
differently from it, we don't
need the relative entropy on the right-hand side of the estimate 
\eqref{One-block:est}.

\begin{rem}  \label{Remark3.1}
A qualitative version of Proposition \ref{One-block} with $a_{t,x}=1$:
\begin{align} \label{eq:3.15-D}
\lim_{\ell\to\infty} \varlimsup_{N\to\infty}
E \bigg[ \bigg | \int_0^t  N^{-d}
\sum_{x \in \T_N^d} \hat{\Om}_{x}^\ell(\eta^N(s)) ds\bigg | \bigg] =0
\end{align}
can be shown by the classical argument.  Indeed, 
considering the relative entropy with respect to
the global equilibrium $\nu^N \equiv\nu_{1/2}^N$ (so we take $g=1$ in
\eqref{eq:3.16-D}), we have
\begin{align} \label{eq:3.16-D}
\frac{d}{dt} H(f_t| 1) \le -4N^2 \mathcal{D}_E(\sqrt{f_t}),
\end{align}
where $\mathcal{D}_E$ is the Dirichlet form associated with the
Kawasaki generator $L_E$:
$$
\mathcal{D}_E(f) = \frac14 \sum_{x,y\in \T_N^d}
E^{\nu^N}\Big[ c_{x,y}(\eta) \big( f(\eta^{x,y})-f(\eta)\big)^2 \Big]
\; \Big( \!\! = - (f,L_Ef)_{L^2(\mathcal{X}_N,\nu_N)}\Big).
$$
Since a classical one-block estimate (and also two-blocks estimate) was 
shown relying only on the estimate \eqref{eq:3.16-D}, we obtain 
\eqref{eq:3.15-D} even with a diverging Glauber part
in our generator $\mathcal{L}_N$; cf.\  Theorem 2.7
of \cite{F18} (only for $d=1$) or Theorem 3.1 of \cite{FUY}.

However, \eqref{eq:3.15-D} is not enough for our purpose, instead
we prepared the stronger error estimate \eqref{One-block:est} in
Proposition \ref{One-block} to show the quantitative result.

The estimate \eqref{eq:3.16-D} is standard.  For example, it is derived as
follows.  In the estimate (4.10) of \cite{F18}, taking $\nu_t =\nu^N$ and
$L=N^2 \mathcal{L}_N$,
the second term on the right-hand side vanishes and also, in the first term
$-\mathcal{D}(\sqrt{f_t};\nu^N)$, one can drop the contribution of the Glauber part 
since it is non-positive, and get $-4N^2 \mathcal{D}_E(\sqrt{f_t})$. 
This shows \eqref{eq:3.16-D}.
\end{rem}

\section{Gradient replacement}
\label{sec:3.3}

Sections \ref{sec:3.3} and \ref{sec:5} are highlights of the non-gradient 
and quantitative method.

The term  $\Om_1$ looks of the order $O(N)$.  We need the gradient replacement
to eliminate the diverging factor in this term, with proper error estimates;
see Theorem \ref{Theorem 3.2} and Lemma \ref{Lemma 3.4}.
Main estimate comes from Section \ref{sec:5}, where we state decay estimates
for the CLT variances.  In particular,  Theorem \ref{Theorem 5.1} and
Proposition \ref{prop:5.4}, taken from \cite{FGW}, are new and essential to 
show the quantitative result.  These were shown inspired by
the recent progress in the quantitative homogenization theory.

\subsection{Boltzmann-Gibbs principle for gradient replacement}

For the microscopic function $\Om_1$ which is given in Lemma \ref{Lemma 3.1}
and looks $O(N)$, we need the following
Boltzmann-Gibbs principle for the gradient replacement.
This is a refinement of Theorem 3.2 of \cite{FUY} adding the Glauber part and
giving the necessary error estimates.

\begin{thm}  \label{Theorem 3.2}
For every $T>0$, there exists  $C=C_T>0$  such that
\begin{align*}
    \int_0^t &  E^{f_s} \!  \Bigg[\Om_1 +  N^{1-d}
    \sum_{x\in \T_N^d} \left( D(\bar{\eta}_x^\ell) \partial\la(s,x/N),
        \ell_*^{-d} \t_x A_\ell \right)   \\ 
&  \qquad\qquad  -\frac{\b N^{-d}}{2}  
      \sum_{x\in\T_N^d} \left(\partial\la(s,x/N),
          R(\bar{\eta}_x^\ell;F) \partial\la(s,x/N)\right) \Bigg]  
               \, ds \\
&   \le 
               \frac{C}{\b}K + \b^2 Q_{N,\ell}^{(1)}(\la,F)
               +  \b Q_\ell^{(2)}(\la,F)
\end{align*}
for every $t\in [0,T]$ and $\b > 0$, where
\begin{equation}  \label{eq:3.Aell}
A_\ell = \{A_{\ell,i}(\eta)\}_{i=1}^d :=
   \sum_{b = \{x,y\} \in (\La(\ell))^*} (\eta_y-\eta_x) (y-x),
\end{equation}
denoting the box $\La_{\ell,0}$ with the center at $0$ by $\La(\ell)$, and
\begin{equation} \label{eq:3.R}
R(\rho;F) = \widehat{c}(\rho;F)  - \widehat{c}(\rho),
\end{equation}
recalling \eqref{1.3} and \eqref{eq:1.6-Q} for $\widehat{c}(\rho)$ and
$\widehat{c}(\rho;F)$, respectively.

The error $Q_{N,\ell}^{(1)} \equiv Q_{N,\ell}^{(1)}(\la,F)$  is estimated as
\begin{equation}  \label{3.Q1}
|Q_{N,\ell}^{(1)}| \le C N^{-1} \ell^{(2d+4)} \|\partial\la\|_\infty^3
(1+ \vertiii{F}_{0,\infty}^3).
\end{equation}
The other error $Q_\ell^{(2)} \equiv Q_\ell^{(2)}(\la,F)$ appears in the computation 
of the CLT variance (in Section \ref{sec:5}) and has a bound
\begin{equation}  \label{eq:3.26}
|Q_\ell^{(2)}| \le \|\partial\la\|_\infty^2 Q_\ell(F)
\end{equation}
with $Q_\ell(F)$ being bounded in Corollary \ref{Corollary 5.1} as
$$
Q_\ell(F) \le 
C (1+ r(F)^{2d}) (1+\vertiii{F}_{0,\infty}^2) \ell^{-1}+ C \ell^{-\a_1},
$$
for some $C>0$ and $\a_1>0$.
\end{thm}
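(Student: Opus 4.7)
The proof follows Varadhan's non-gradient replacement strategy, as adapted in \cite{FUY} Theorem 3.2, but with quantitative enhancements throughout to track the Glauber scaling $K$ and to produce the polynomial-in-$N$ error bounds. The idea is to (a) expose the corrector $F$ inside a localized microscopic flux, (b) use the entropy inequality plus Feynman-Kac to pass to equilibrium, (c) identify the leading equilibrium contribution with the CLT variance $(\partial\lambda, \hat c(\rho;F)\partial\lambda)$, and (d) decompose this variance via $\hat c(\rho;F) = \hat c(\rho) + R(\rho;F)$ and the Einstein relation to recognize the two subtracted terms in the statement.

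First I would rewrite $\Omega_1$ by Taylor-expanding $\partial\lambda(t,\cdot/N)$ around $x/N$ and grouping neighboring bonds into translates of a mesoscopic box $\Lambda(\ell)$, obtaining
\[
\Omega_1 = -N^{1-d}\sum_{x\in\T_N^d} \bigl(\partial\lambda(t,x/N),\, \ell_*^{-d}\tau_x W_\ell^F(\eta)\bigr) + Q_{N,\ell}^{(1,a)},
\]
where
\[
W_\ell^F(\eta) = \tfrac12 \sum_{b=\{y,z\}\in\Lambda(\ell)^*} c_b(\eta)\Bigl[(z-y)(\eta_z-\eta_y) - \pi_b\Bigl(\sum_{w\in\Z^d}\tau_w F\Bigr)\Bigr]
\]
collects the corrected microscopic flux on a block. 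The Taylor remainder is bounded as in \eqref{3.Q1} by $CN^{-1}\ell^{2d+4}\|\partial\lambda\|_\infty^3(1+\vertiii{F}_{0,\infty}^3)$, where the $\ell^{2d+4}$ factor reflects the product of the block volume $\ell^d$ and the enlarged support produced by second-order Taylor terms paired with the corrector's support.

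Second, I would apply the entropy inequality for each $t$ together with a Feynman-Kac bound under $\mathcal L_N = N^2 L_K + K L_G$: for every $\beta>0$,
\[
E^{f_t}[\Phi] \le \frac{1}{\beta}\log E^{\nu^N}[e^{\beta\Phi}] + \frac{1}{\beta}H(f_t|\nu^N).
\]
The Kawasaki part, being reversible under $\nu^N$, contributes to leading order $\tfrac12\beta^2$ times the CLT variance of the flux under $\nu_\rho$; the Glauber part, because it enters $\mathcal L_N$ with the factor $K$ and is otherwise a bounded perturbation, contributes at most $O(K)$ to the exponential moment. Dividing by $\beta$ produces the $CK/\beta$ term. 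The CLT variance, identified with the quadratic form $(\partial\lambda, \hat c(\rho;F)\partial\lambda)$ via the very definition of $\hat c(\rho;F)$ in \eqref{1.3}, gives (per site, after summation) the $\beta$-linear contribution. The refined one-block estimate (Theorem \ref{One-block}) and the equivalence-of-ensembles bound \eqref{eq:3EE} let me replace $\rho$ by $\bar\eta_x^\ell$ throughout. Decomposing $\hat c(\rho;F) = \hat c(\rho) + R(\rho;F)$ from \eqref{eq:3.R}, using the Einstein relation $D(\rho) = \hat c(\rho)/(2\chi(\rho))$, and matching the equilibrium mean of $\ell_*^{-d}\tau_x A_\ell$ with the appropriate microscopic gradient, the $\hat c(\rho)$-piece becomes $N^{1-d}\sum_x(D(\bar\eta_x^\ell)\partial\lambda, \ell_*^{-d}\tau_x A_\ell)$, while the $R(\rho;F)$-piece becomes $\tfrac{\beta}{2}N^{-d}\sum_x(\partial\lambda, R(\bar\eta_x^\ell; F)\partial\lambda)$. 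The finite-volume error in this CLT variance computation is exactly the $\beta Q_\ell^{(2)}$ term, controlled via the estimate \eqref{eq:3.26} of Section \ref{sec:5} and Corollary \ref{Corollary 5.1}.

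The hardest step will be the quantitative control of the CLT variance error $Q_\ell^{(2)}$ with a \emph{rate} in $\ell$. In \cite{FUY}, Theorem 5.1 only gave qualitative convergence of $\hat c(\rho;F)$ to $\hat c(\rho)$ via Varadhan's characterization of closed forms \cite{11}, which is far too weak to yield the polynomial bound $CN^{-\kappa}$ targeted in Theorem \ref{Theorem 1.1}. The paper overcomes this precisely by invoking the quantitative homogenization results of \cite{FGW}, which give a polynomial rate of convergence uniformly in $\rho\in[0,1]$. The careful bookkeeping required to match the mesoscopic scale $\ell = \ell(N)$, the corrector support $r(F)$, and the diverging Glauber factor $K$ — so that the three error terms $CK/\beta$, $\beta^2 Q_{N,\ell}^{(1)}$, $\beta Q_\ell^{(2)}$ are all simultaneously small after optimizing $\beta$ — is the main technical burden.
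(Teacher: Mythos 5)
Your high-level roadmap captures the right ingredients, but the attribution of the $\beta^2 Q_{N,\ell}^{(1)}$ error term is wrong in a way that matters. You describe $Q_{N,\ell}^{(1)}$ as a \emph{Taylor remainder} arising from the deterministic rewriting of $\Om_1$ into a block form involving $W_\ell^F$. This cannot be right. A deterministic algebraic rewriting of $\Om_1$ would produce an error that is independent of $\beta$ and, if it came from Taylor-expanding $\partial\la$, would involve higher derivatives $\|\partial^2\la\|_\infty, \|\partial^3\la\|_\infty$ rather than the cubic power $\|\partial\la\|_\infty^3$ that appears in \eqref{3.Q1}. In the paper's proof, $Q_{N,\ell}^{(1)}$ is not a Taylor remainder at all: it is the cubic Rayleigh--Schr\"odinger perturbation error coming from the variational/Feynman--Kac bound in Lemma \ref{Lemma 3.3} (the Kipnis--Varadhan step). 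Concretely, after the entropy inequality at the path-space level one bounds $N^{-d}\log E^{\nu^N}[e^{\int_0^T\beta W_{N,t}dt}]$ by a variational expression, and the Rayleigh--Schr\"odinger expansion of that expression gives a cubic remainder $\frac{N^2T}{\beta|\La(\ell)^*|}\cdot 4\|V\|_\infty^3\big(\text{gap}^{-1}\big)^2$ with $V = N^{-1}\beta|\La(\ell)^*|\,\theta\cdot G$; substituting $\|V\|_\infty^3 \sim (\beta\ell^d\|J\|_\infty\|G\|_\infty/N)^3$ and the spectral gap $\sim\ell^{-2}$ produces the factor $\beta^2\ell^{2d+4}\|\partial\la\|_\infty^3(1+\vertiii{F}_{0,\infty}^3)N^{-1}$. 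The exponent $2d+4$ therefore comes from $|\La(\ell)^*|^2\times(\ell^2)^2$, not from ``block volume times enlarged corrector support'' as you claim.

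A second, smaller inaccuracy: the rewriting of $\Om_1$ into the form $N^{1-d}\sum_x J(t,x/N)\cdot\tau_x G(\eta)$ with $G = \ell_*^{-d}\{D(\bar\eta_\ell)A_\ell - B_\ell + H_\ell\}$ is exact (up to boundary effects handled elsewhere), not a Taylor step; all three block functions $A_\ell,B_\ell,H_\ell$ defined in \eqref{eq-3.A}--\eqref{eq-3.H} enter this $G$, and the subtraction of the $D(\bar\eta_x^\ell)A_\ell$ term in the statement of Theorem \ref{Theorem 3.2} is precisely designed so that $G$ is the function whose localized CLT variance $\De_{\ell,m,\zeta}(\th\cdot G)$ converges to $\frac12\th\cdot R(m/\ell_*^d;F)\th$ (Corollary \ref{Corollary 5.1}). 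Also, the one-block estimate (Theorem \ref{One-block}) and equivalence of ensembles play no role in the proof of Theorem \ref{Theorem 3.2} itself; they are used for $\Om_2$ in the proof of Lemma \ref{Lemma 3.2} and inside Section \ref{sec:5}, respectively. You should make the reduction to Lemma \ref{Lemma 3.3} with the specific $(J,G,M)$ choice explicit, and correctly locate the origin of $\beta^2 Q_{N,\ell}^{(1)}$ in the Feynman--Kac/Rayleigh--Schr\"odinger step rather than in deterministic algebra.
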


Combining with the key lemma stated below, the proof of Theorem 
\ref{Theorem 3.2} heavily relies on a recent result 
(Theorem \ref{Theorem 5.1} and Corollary \ref{Corollary 5.1})
on the convergence rate of the CLT
variances.  This will be discussed in Section \ref{sec:5}.
In this section, we give the proof of Theorem \ref{Theorem 3.2}
assuming that Corollary \ref{Corollary 5.1} is shown.

Later in Lemma \ref{Lemma 3.4}, we will show that the term next to $\Om_1$
in the first displayed formula
in Theorem \ref{Theorem 3.2}, which still looks of diverging order $O(N)$
from the front factor $N^{1-d}$, can 
actually be rewritten into a term of $O(1)$.

\subsection{A key lemma in the setting of Glauber-Kawasaki dynamics}

We state a key lemma by which one can reduce a non-equilibrium problem 
into a static problem under the canonical equilibrium measure,
sometimes called Kipnis-Varadhan estimate or It\^o-Tanaka trick.
We introduce some notation.  For $\La \Subset \Z^d$ (i.e., finite
subset of $\Z^d$) and $m \in [0,|\La|]\cap\Z$,
\begin{align*}
&  \mathcal{X}_\La = \{0,1\}^\La, \quad
   \mathcal{X}_{\La,m} = \{ \eta\in \mathcal{X}_\La; \sum_{x\in\La} \eta_x =m \}, 
   \\
&  \nu_{\La,m} = \text{ uniform probability measure on } \mathcal{X}_{\La,m},
\end{align*}
and  $\lan\,\cdot\,\ran_{\La,m}$
denotes the expectation with respect to  $\nu_{\La,m}$. Then,
we have a result similar to Lemmas 3.3 (the case $M(\rho)=0$)
and 6.1 of \cite{FUY}, but the last term
$\frac{C}\b$ in their estimates is replaced by $\frac{C}\b K$ due to the
contribution from the Glauber part.  We also show an estimate
for the error term instead of taking the limit in $N$ as in \cite{FUY}.

Given a positive integer  $\ell$ (recall $\ell_* =2\ell+1<N$ as stated below 
\eqref{eq:3.saeta}),  we  write  $G_{\zeta} = G_{\zeta}(\xi)$  for a function  
$G = G(\eta)$  when we consider $G$ as a function of  $\xi = \eta|_{\La(\ell)}$ 
regarding $\zeta = \eta|_{\La(\ell)^c}$ as a parameter.  The following lemma
will be used for the proof of Theorem \ref{Theorem 3.2} taking $n=d$
and for the proof of Lemma \ref{Lemma 3.4} (given in Section \ref{sec:4.4-C})
 taking $n=1$.

\begin{lem}  \label{Lemma 3.3}
Let  $n\in \N$, $J(t,v) = \{J_i(t,v)\}_{i=1}^n \in C^\infty([0,T]\times\T^d, \R^n)$, 
$G(\eta) = \{ G_i(\eta)\}_{i=1}^n\in \mathcal{F}_0^n$ and  $M(\rho) = 
\{ M_{ij}(\rho)\}_{1\le i,j \le n} \in C([0,1],\R^n \otimes \R^n)$. 
Suppose that $\lan G_\zeta \ran_{\La(\ell),m} = 0$ for every  
$\zeta\in \mathcal{X}_{\La(\ell)^c}$ and $m: 0 \le m \le \ell_*^d$.  
Then, there exists  $C=C_T>0$
such that for every $t\in [0,T]$ and $\b > 0$
\begin{align*}
 \int_0^t & E^{f_s} \bigg[ N^{1-d}
  \sum_{x\in\T_N^d} J(s,x/N) \cdot \t_x G(\eta)
   - \b N^{-d} \sum_{x\in\T_N^d} J(s,x/N) \cdot M(\bar{\eta}_x^\ell)
    J(s,x/N)  \bigg] \, ds \\
&\le \b\, t \sup_{|\th| \le \|J\|_\infty } \sup_{m,\zeta\in \mathcal{X}_{\La(\ell)^c}}
  \Big[ d \,\ell_*^d \big\lan \th\cdot G_{\zeta} \,
    (-L_{\La(\ell),\zeta})^{-1} \th\cdot G_{\zeta} \big\ran_{\La(\ell),m}  
      - \th \cdot M(m/\ell_*^d)\th \Big] \\
     & \qquad + \frac{C}{\b}K + \b^2 Q_{N,\ell}^{(1)}(J,G),
\end{align*}
where  $L_{\La(\ell),\zeta}$  is the operator defined by 
\begin{align}  \label{eq:LLa}
L_{\La(\ell),\zeta} f(\xi) = \sum_{b \in (\La(\ell))^*} c_b(\xi\cdot\zeta) \pi_b f(\xi),
\end{align}
for $\xi\in \mathcal{X}_{\La(\ell)}$, $\zeta\in \mathcal{X}_{\La(\ell)^c}$,
$f$ is a function on $\mathcal{X}_{\La(\ell)}$ and $\xi\cdot\zeta$ denotes
the configuration $\eta\in \mathcal{X}$ such that
$\eta|_{\La(\ell)}=\xi$ and $\eta|_{\La(\ell)^c}=\zeta$.  The error term 
$Q_{N,\ell}^{(1)}\equiv Q_{N,\ell}^{(1)}(J,G)$ has a bound
\begin{align}  \label{eq:3QNell}
|Q_{N,\ell}^{(1)}| 
\le C N^{-1} \ell^{(2d+4)} \| G\|_\infty^3 \| J\|_\infty^3.
\end{align}
\end{lem}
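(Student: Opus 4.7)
The lemma is a Kipnis--Varadhan/Chang--Yau type estimate adapted to the two-scale generator $\mathcal{L}_N=N^2 L_K+KL_G$, and extends Lemma 3.3 of \cite{FUY} (the case $K=0$) to incorporate the Glauber part. My plan is to combine the entropy inequality with the Feynman--Kac variational formula for the full two-scale Dirichlet form, and then localize to the canonical fibers $\mathcal{X}_{\La(\ell),m}$ on which $L_{\La(\ell),\zeta}$ is reversible.

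The hypothesis $\lan G_\zeta\ran_{\La(\ell),m}=0$ places $\th\cdot G_\zeta$ in the orthogonal complement of $\ker L_{\La(\ell),\zeta}$ inside $L^2(\nu_{\La(\ell),m})$, so the resolvent $u_{\th,\zeta,m}:=(-L_{\La(\ell),\zeta})^{-1}(\th\cdot G_\zeta)$ is well defined and furnishes the $H^{-1}$-norm $\lan\th\cdot G_\zeta,u_{\th,\zeta,m}\ran_{\La(\ell),m}$ that appears in the target bound. Writing $V_t:=N^{1-d}\sum_x J(t,x/N)\cdot\tau_x G$ for the fluctuation functional and $Q_t:=N^{-d}\sum_x J(t,x/N)\cdot M(\bar{\eta}_x^\ell)J(t,x/N)$ for the quadratic subtraction, I will apply the entropy inequality followed by Feynman--Kac against $\nu^N$ to bound
\begin{align*}
\int_0^T E^{f_t}[V_t-\b Q_t]\,dt\le \int_0^T\sup_g\Big\{\lan V_t-\b Q_t,g\ran_{\nu^N}-\tfrac{1}{\b}\big(N^2\mathcal{D}_K(\sqrt{g})+K\mathcal{D}_G(\sqrt{g})\big)\Big\}\,dt
\end{align*}
up to an entropy cost $H(f_0|\nu^N)/\b$, where $g$ ranges over $\nu^N$-probability densities.

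The localization step is to decompose $N^2\mathcal{D}_K(\sqrt{g})$ as a sum of block Dirichlet forms $\mathcal{D}_{\La_{\ell,x},\zeta}(\sqrt{g})$ indexed by $x\in\T_N^d$ (with bonds shared between neighbouring blocks absorbed into the error $Q^{(1)}_{N,\ell}$), condition $g$ on each block mass $m$ and exterior $\zeta$, and invoke the spectral identity
\begin{align*}
\sup_g\big\{2\lan\th\cdot G_\zeta,g\ran_{\La(\ell),m}-\mathcal{D}_{\La(\ell),\zeta}(\sqrt{g})\big\}=\lan\th\cdot G_\zeta,(-L_{\La(\ell),\zeta})^{-1}\th\cdot G_\zeta\ran_{\La(\ell),m}
\end{align*}
on each fiber. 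Summing over $x$ produces the factor $d\,\ell_*^d$ (one $d$ from counting nearest-neighbour directions, one $\ell_*^d$ from the block volume), and transferring the quadratic term $\b\,\th\cdot M(m/\ell_*^d)\th$ from $\b Q_t$ yields exactly the first summand $\b T\sup_{|\th|\le\|J\|_\infty,\,m,\zeta}[\cdots]$ of the claim.

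The main obstacle is the Glauber piece $K\mathcal{D}_G(\sqrt{g})\ge 0$: it couples different canonical fibers of mass $m$ and therefore cannot be used to control the Kawasaki resolvent localized on each fiber. I will simply discard it from the supremum, paying the crude but irreducible price $CK/\b$ that produces the second term of the claim; this is the structural reason the hydrodynamic limit will only tolerate a Glauber speed $K\le\overline{K}_\de(N)=\de\log N$ in Section \ref{sec:6}. The higher-order error $\b^2 Q^{(1)}_{N,\ell}$ collects the boundary bonds shared between adjacent $\ell_*$-blocks, the commutator between $L_K$ and the slow profile $J(t,x/N)$ (which produces one lattice gradient of $J$ per bond), and the cubic Taylor remainder of the tilted log-MGF; tracking these with $|V_t|\le C\|J\|_\infty\|G\|_\infty$ and bond count $\sim\ell^{d+1}$ per block yields the stated $N^{-1}\ell^{2d+4}\|G\|_\infty^3\|J\|_\infty^3$.
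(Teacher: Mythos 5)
Your overall plan is the right one: entropy inequality followed by the Feynman--Kac variational formula, decomposition of the Kawasaki Dirichlet form into block Dirichlet forms, conditioning on the canonical fiber $(m,\zeta)$, and the Rayleigh--Schr\"odinger perturbation bound with the spectral gap to produce the $H^{-1}$-norm and the error $Q_{N,\ell}^{(1)}$. This is indeed how the paper proceeds (combining the argument from Lemma 6.1 of \cite{FUY} with the added Glauber part as in Lemma 3.1 of \cite{FvMST}).

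However, your treatment of the Glauber piece contains a genuine inconsistency. You write that $K\mathcal{D}_G(\sqrt{g})\ge 0$ and then claim that ``discarding it from the supremum'' costs $CK/\b$. But if $\mathcal{D}_G(\sqrt{g})$ were non-negative, dropping $-\frac{1}{\b}K\mathcal{D}_G(\sqrt{g})$ from the supremand would only \emph{relax} the upper bound and incur no cost at all; your $CK/\b$ term would then have no source. The actual point, emphasized in Remark \ref{Remark3.1} of the paper, is that $\nu^N=\nu_{1/2}^N$ is \emph{not} reversible for $L_G$, so $E^{\nu^N}[\psi(-L_G\psi)]$ is \emph{not} a Dirichlet form and can be negative. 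What the paper uses is the bound $E^{\nu^N}[\psi(-L_G\psi)]\ge -CN^d$ (proved in Lemma 3.1 of \cite{FvMST}), and it is precisely this lower bound, after dividing by $\b N^d$, that produces the $CK/\b$ error. Also, your secondary reason (``it couples different canonical fibers'') is not the operative one; even if it localized, the non-reversibility would still force the $CK/\b$ payment. On the error $Q_{N,\ell}^{(1)}$, the $\ell^{2d+4}$ exponent comes from $|(\La(\ell))^*|^2\cdot(\ell^2)^2$ in the Rayleigh--Schr\"odinger remainder (with the spectral gap $\ge C_0\ell^{-2}$), not from boundary bonds between adjacent blocks (those are simply dropped, which only helps the inequality) nor from a commutator of $L_K$ with the slow profile; your ``cubic Taylor remainder of the tilted log-MGF'' is the only one of your listed sources that corresponds to the paper's accounting.
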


Lemma \ref{Lemma 3.3} is a combination of essentially known results,
providing fine error estimates, so the proof is postponed to 
Section \ref{sec:4.3-C}.

\subsection{Proof of Theorem \ref{Theorem 3.2}}

When $K=0$ (i.e.\ no Glauber part), the proof of Theorem \ref{Theorem 3.2}
without error estimates was given in Sections 4--6 of \cite{FUY}.
Since only the Kawasaki part appears in the first term of
the estimate in Lemma \ref{Lemma 3.3}
above, the arguments in Sections 4 and 5 of \cite{FUY} are applicable
in our setting; see Section \ref{sec:5} below for more details and refined estimates.

To rewrite the function in the expectation in  Theorem \ref{Theorem 3.2},
we introduce the following three $\R^d$-valued functions  $A_\ell$, $B_\ell$,
and $H_\ell$ of $\eta$; in fact, $A_\ell$ is the same as in \eqref{eq:3.Aell}.
We denote $\Psi_i = \eta_{e_i} - \eta_0$ and  
$W_{x,y}(\eta) = c_{x,y}(\eta)(\eta_y - \eta_x)$ for the microscopic current,
where  $e_i \in \Z^d$ stands for the unit vector in the $i$-th direction.
Recalling $\xi = \eta|_{\La(\ell)}$ and $\zeta = \eta|_{\La(\ell)^c}$, 
\begin{align}  \label{eq-3.A}
 A_\ell(\eta) \equiv A_{\ell}(\xi) 
&  = \frac12 \sum_{x,y\in \La(\ell):|x-y|=1}
        (\xi_y-\xi_x)  (y-x)   
       \\  
&  = \sum_{i=1}^d \sum_{x: \t_x e^*_i \in (\La(\ell))^*}
     \t_x\Psi_i(\xi) \; e_i,   \notag
       \\    \label{eq-3.B}
 B_\ell(\eta) \equiv B_{\ell,\zeta}(\xi) 
& = \frac12 \sum_{x,y\in \La(\ell):|x-y|=1}
     W_{x,y}(\eta) (y-x)  
       \\  
& = -L_{\La(\ell),\zeta} \, \Big( \sum_{x\in\La(\ell)} x\xi_x\Big),   \notag
\intertext{and}  \label{eq-3.H}
 H_\ell(\eta) \equiv H_{\ell,\zeta,F}(\xi) 
& = \sum_{x\in \La(\ell-n)} \t_x(L_E F)(\xi\cdot\zeta) 
       \\  
& = L_{\La(\ell),\zeta} \,
  \Big( \sum_{x\in \La(\ell-n)} \t_x  F\Big)(\xi),  \notag
\end{align}
for a given function $F = (F_i)_{i=1}^d \in \mathcal{F}_0^d$ which is
$\mathcal{F}_{\La(n-1)}$-measurable with $n=r(F)+1$. 
Recall \eqref{eq:LLa} for the operator $L_{\La(\ell),\zeta}$ and
\eqref{1.1} (or its extension on $\Z^d$) for $L_E$.

\begin{proof}[Proof of Theorem \ref{Theorem 3.2}]   
We apply Lemma \ref{Lemma 3.3} with
$n = d, J(t,v) = \partial \la(t,v)$  and
\begin{align}  \label{eq:3-G}
\begin{aligned}
G(\eta) & = 
  \ell_*^{-d} \left\{D(\bar{\eta}_\ell) A_\ell - B_\ell + H_\ell \right\},  \\
M(\rho) & = \frac12\left\{ \widehat{c}(\rho;F) - \widehat{c}(\rho)\right\}
\; \Big(\!\!= \frac12 R(\rho;F)\Big).
\end{aligned}
\end{align}
Note that, under the above choice, the expressions under
$\int_0^t E^{f_s}[\cdots]ds$ in Theorem \ref{Theorem 3.2} and
Lemma \ref{Lemma 3.3} coincide, by recalling that $\Om_1$ is defined 
by \eqref{eq:Om1} with $\Om_{x,y}$ in \eqref{eq:Omxy}.

Then, we have $\| G\|_\infty \le C (1+\vertiii{F}_{0,\infty})$.  Indeed, the function
$D=D(\rho)$ is bounded from \eqref{eq:1.9},
$\| A_\ell\|_\infty \le C \ell_*^d$, $\| B_\ell\|_\infty \le C \ell_*^d$
since $W_{x,y}$ is bounded, and from
$$
H_\ell = \sum_{\{x,y\}\in \La(\ell)^*} c_{xy}(\xi\cdot\zeta) \pi_{xy}
\Big( \sum_{z\in \La(\ell-n)} \t_z F\Big),
$$
with $n=r(F)+1$, we have
$$
\| H_\ell\|_\infty \le C \ell_*^d r(F)^d \| F\|_\infty = 
C \ell_*^d \vertiii{F}_{0,\infty}. 
$$
Therefore, by \eqref{eq:3QNell}, the error $Q_{N,\ell}^{(1)}(\la,F) 
:= Q_{N,\ell}^{(1)}(J,G)$ with $J$ and $G$ determined as above has the estimate 
\eqref{3.Q1}.

Thus, the proof of Theorem \ref{Theorem 3.2} is concluded 
with the help of Corollary \ref{Corollary 5.1}, by observing that
\begin{align}  \label{eq:4.11-P}
& Q_\ell^{(2)}(\|J\|_\infty,F) \\
& \;  :=
 \sup_{|\th| \le \|J\|_\infty } \sup_{m,\zeta\in \mathcal{X}_{\La(\ell)^c}} 
\Big\{ \ell_*^d \lan \th\cdot G_{\zeta}, 
    (-L_{\La(\ell),\zeta})^{-1} \th\cdot G_{\zeta} \ran_{\La(\ell),m}
   - \th \cdot M(m/\ell_*^d)\th \Big\}  \notag
\end{align}
appearing from Lemma  \ref{Lemma 3.3}
is bounded by $\| J\|_\infty^2 Q_\ell(F)$ with $Q_\ell(F)$ given in 
Corollary \ref{Corollary 5.1}.  Taking $J=\partial\la$ and
denoting $Q_\ell^{(2)}(\la,F) := Q_\ell^{(2)}(\|\partial\la\|_\infty,F)$, 
we obtain \eqref{eq:3.26} in Theorem \ref{Theorem 3.2}.
\end{proof}

\subsection{Error estimate to rewrite $O(N)$-looking term in 
Theorem \ref{Theorem 3.2}}

Next, we show the following lemma, in which we evaluate the error to rewrite
the term appearing in Theorem \ref{Theorem 3.2} from $\Om_1$ into another
term of order $O(1)$.

\begin{lem} \label{Lemma 3.4}
Set
\begin{align*}
Q_{N,\ell}^{(3)}(\la) =
\int_0^t E^{f_s} &  \bigg[
  N^{1-d} \sum_{x\in \T_N^d} \bigg( D(\bar{\eta}_x^\ell) \partial \la(s,x/N),
    \ell_*^{-d} \t_x A_\ell(\eta) \bigg)    \\ 
& \qquad  + N^{-d} \sum_{x\in\T_N^d} \sum_{i,j=1}^d \partial_{v_i}\partial_{v_j}
\la(s,x/N)
     P_{ij}(\bar{\eta}_x^\ell) \bigg] \, ds,
\end{align*}
where $P(\rho) \equiv \{P_{ij}(\rho)\}_{1\le i,j \le d} := \int_0^\rho D(\rho')d\rho'$,
$\rho \in [0,1]$.
Then, there exists $C=C_T>0$ such that for every  $t\in [0,T]$ and $\b>0$
the following upper bound holds for $Q_{N,\ell}^{(3)} \equiv
Q_{N,\ell}^{(3)}(\la)$:
$$
Q_{N,\ell}^{(3)}\le C (\ell^{-1}\|\partial^2\la\|_\infty +N^{-1}\|\partial^3\la\|_\infty)
 +C\b \ell^{-1} \|\partial\la\|_\infty^2   + 
\frac{C}\b K + C\b^2 N^{-1}\ell^{2d-2}  \|\partial\la\|_\infty^3. 
$$
\end{lem}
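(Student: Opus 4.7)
The strategy mirrors the continuum integration-by-parts identity
\[
\int_{\T^d}\partial_{v_i}\la(t,v)\,D_{ij}(\rho(t,v))\partial_{v_j}\rho(t,v)\,dv
= -\int_{\T^d}\partial_{v_j}\partial_{v_i}\la(t,v)\,P_{ij}(\rho(t,v))\,dv,
\]
which holds because $\partial_\rho P_{ij}=D_{ij}$. The proof is to realize this cancellation at the lattice level and control the resulting errors with sufficient precision.

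First I would decompose the Kawasaki current using its telescoping form $A_{\ell,j}=\sum_{z\in\La(\ell):\,z_j<\ell}(\eta_{z+e_j}-\eta_z)$ as
\[
\ell_*^{-d}\,\t_x A_{\ell,j}(\eta) = \bigl(\bar\eta_{x+e_j}^\ell-\bar\eta_x^\ell\bigr) + \ell_*^{-1}\bigl(\bar\eta_x^{+j}-\bar\eta_{x+e_j}^{+j}\bigr),
\]
where $\bar\eta_x^{+j}$ denotes the average of $\eta$ over the face $\{y:y_j-x_j=\ell,\,|y_k-x_k|\le\ell\text{ for }k\ne j\}$; the second (face-correction) term has $L^\infty$-norm bounded by $C\ell^{-1}$. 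Taylor's theorem for $P_{ij}$ gives the discrete chain rule
\[
D_{ij}(\bar\eta_x^\ell)\bigl(\bar\eta_{x+e_j}^\ell-\bar\eta_x^\ell\bigr) = P_{ij}(\bar\eta_{x+e_j}^\ell)-P_{ij}(\bar\eta_x^\ell)+R_{ij,x}(\eta),
\]
with $|R_{ij,x}|\le\tfrac12\|D'_{ij}\|_\infty(\bar\eta_{x+e_j}^\ell-\bar\eta_x^\ell)^2\le C\ell^{-2}$. Summing the leading discrete gradient $P_{ij}(\bar\eta_{x+e_j}^\ell)-P_{ij}(\bar\eta_x^\ell)$ by parts in $x$, together with the Taylor estimate $\partial_{v_i}\la((x-e_j)/N)-\partial_{v_i}\la(x/N)=-N^{-1}\partial_{v_j}\partial_{v_i}\la(x/N)+O(N^{-2}\|\partial^3\la\|_\infty)$, yields exactly $-N^{-d}\sum_x\partial_{v_j}\partial_{v_i}\la(x/N)\,P_{ij}(\bar\eta_x^\ell)$ and so cancels the second term of $Q_{N,\ell}^{(3)}$ up to $O(N^{-1}\|\partial^3\la\|_\infty)$. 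Summing the face correction by parts in the same manner, the $\ell_*^{-1}$ factor combines with the $N^{-1}\partial_{v_j}\partial_{v_i}\la$ arising from the $\la$ finite difference to produce an $O(\ell^{-1}\|\partial^2\la\|_\infty)$ contribution.

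The pieces that do not admit such a deterministic bound—namely $R_{ij,x}$ and the stray contribution from the Lipschitz variation of $D_{ij}(\bar\eta^\ell)$ in $x$, which if bounded pointwise would blow up as $N\ell^{-2}\|\partial\la\|_\infty^2$—are controlled by Lemma~\ref{Lemma 3.3} with $n=1$, $J(t,v)=\partial_{v_i}\la(t,v)$, and a scalar local function $G(\eta)$ assembled from these fluctuations so that $\|G\|_\infty=O(\ell^{-2})$ and $\langle G_\zeta\rangle_{\La(\ell),m}=0$ for every canonical measure. Choosing $M(\rho)$ to match the CLT variance $\ell_*^d\langle G_\zeta(-L_{\La(\ell),\zeta})^{-1}G_\zeta\rangle_{\La(\ell),m}$, whose $L^\infty$-norm is $O(\ell^{-1})$ thanks to the discrete-gradient structure of $G$, renders the sup-term in Lemma~\ref{Lemma 3.3} non-positive. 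The absorbed $\b N^{-d}\sum_x(\partial\la,M(\bar\eta_x^\ell)\partial\la)$ contributes $C\b\ell^{-1}\|\partial\la\|_\infty^2$; the Glauber part of Lemma~\ref{Lemma 3.3} yields $CK/\b$; and the Rayleigh--Schr\"odinger error $CN^{-1}\ell^{2d+4}\|G\|_\infty^3\|\partial\la\|_\infty^3$ becomes $CN^{-1}\ell^{2d-2}\|\partial\la\|_\infty^3$ after inserting $\|G\|_\infty=O(\ell^{-2})$, matching the stated bound.

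The main obstacle is the construction of $G$ (and the matching $M$): simultaneously centered under every canonical measure, bounded in $L^\infty$ by $C\ell^{-2}$, and with CLT variance of order $\ell^{-1-d}$. The last requirement is strictly sharper than what the spectral gap of $L_{\La(\ell),\zeta}$ alone provides (which would yield only $O(\ell^{d-2})$, insufficient for $d\ge 2$) and rests on the gradient structure of $G$ inherited from the discrete gradient of $\bar\eta^\ell$. A subsidiary point is arranging the bookkeeping so that the would-be $N\ell^{-2}\|\partial\la\|_\infty^2$ divergence from the Lipschitz variation of $D_{ij}(\bar\eta^\ell)$ is entirely absorbed into the Kipnis--Varadhan estimate rather than escaping into the deterministic part.
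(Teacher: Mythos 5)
Your proposal takes essentially the same route as the paper: decompose $\t_x A_\ell$ into a discrete-gradient part plus a face correction (equivalent to the paper's splitting $I_1^N=J_+^N-J_-^N$ and shifting of $J_-^N$), apply Taylor's formula to $P_{ij}$ to realize the lattice integration-by-parts cancellation with $I_2^N$, and control the remaining centered fluctuation $G$ (the paper's $Q_{ij;\ell}(0,\eta)$) via Lemma~\ref{Lemma 3.3} with $n=1$. The only bookkeeping difference is your choice of a nonzero $M$ matching the per-volume CLT variance, whereas the paper sets $M(\rho)=0$ and bounds the sup-term directly from the spectral gap $(-L_{\La(\ell+1),\zeta})^{-1}\le C\ell^2$ together with the moment estimates \eqref{3.8-A}--\eqref{3.9-A}, which yield $\lan G^2\ran_{\La(\ell+1),m}=O(\ell^{-d-3})$ (this is precisely the extra smallness you attribute to the discrete-gradient structure of $\bar\eta^\ell$); both routes produce the same $C\b\ell^{-1}\|\partial\la\|_\infty^2$ contribution.
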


The proof is given in Section \ref{sec:4.4-C} by carefully reviewing the
proof of Lemma 3.4 of \cite{FUY} and applying Lemma \ref{Lemma 3.3},
i.e.\ the key lemma in the setting of Glauber-Kawasaki dynamics.

\section{CLT variances and their error estimates}
\label{sec:5}

We now present the error estimates for the variational quantity 
such as $Q_\ell^{(2)}(\|J\|_\infty,F)$ in \eqref{eq:4.11-P} appearing from 
Lemma \ref{Lemma 3.3}.  This contains the so-called CLT
(central limit theorem) variance defined by \eqref{eq:CLT-variance}.
This quantity is defined by means 
of the localized Kawasaki generator $L_{\La(\ell),\zeta}$ given in
\eqref{eq:LLa} and defined on the box $\La(\ell) = \La_{\ell,0}$ centered at $0$,
where we denoted $\zeta=\eta|_{\La(\ell)^c}$ and $\xi=\eta|_{\La(\ell)}$
for $\eta\in \mathcal{X}$.  In particular, the Glauber part plays no role and the
arguments in Sections 4 (characterization of closed forms) and 5 (computation
of variances) of \cite{FUY} are valid as is in our setting.  

However, these are not
sufficient for our purpose to derive appropriate decay rates for the error terms.
In particular, for the CLT variance of $A_\ell$, a qualitative convergence 
result as $\ell\to\infty$ (see \eqref{4.1.1} below)
is known and shown based on the 
characterization of the closed forms, initiated by Varadhan \cite{11} 
for the Ginzburg-Landau type lattice model.  But, this is weak for our purpose and, 
to fill the gap,
Theorem \ref{Theorem 5.1} below was shown in \cite{FGW}
by a new technique inspired by the method of the quantitative homogenization.
This theorem provides an appropriate decay rate of  the CLT variance of $A_\ell$
and, in particular, the characterization of the closed forms was not used
for the proof of this theorem.

The CLT variance of $B_\ell-H_\ell$ and the CLT covariance between $A_\ell$ 
and $B_\ell-H_\ell$ are calculated in Proposition \ref{Proposition 5.1}, providing
a refined error estimate.  Theorem \ref{Theorem 5.1} and 
Proposition \ref{Proposition 5.1} are summarized into Corollary \ref{Corollary 5.1},
which was used for the proof of Theorem \ref{Theorem 3.2}.
Proposition \ref{prop:5.4}, which is taken
from \cite{FGW} and gives a decay rate for a minimizing sequence for the
variational formula \eqref{1.3}, also plays a key role in our study to derive
the quantitative result in Theorem \ref{Corollary 2.1}.

\subsection{Definition of CLT covariances and variances}

Motivated by the variational quantity appearing in Lemma \ref{Lemma 3.3},
for  $f,g : \mathcal{X}_{\La(\ell),m} \to \R$  which satisfy 
$\lan f \ran_{\La(\ell),m} = \lan g \ran_{\La(\ell),m} = 0,$
we define
\begin{align}  \label{eq:CLT-variance}
\begin{aligned}
& \Delta_{\ell,m,\zeta} (f,g) = \lan f (-L_{\La(\ell),\zeta})^{-1} g 
\ran_{\La(\ell),m},   \\
& \Delta_{\ell,m,\zeta} (f) = \Delta_{\ell,m,\zeta} (f,f),
\end{aligned}
\end{align}
where $m\in [0,\ell_*^d]\cap\Z$  and  $\zeta\in\mathcal{X}_{\La(\ell)^c}$.
Recall three $\R^d$-valued functions  $A_\ell, B_\ell$  and
$H_\ell$, which are defined by \eqref{eq-3.A}, \eqref{eq-3.B}, and \eqref{eq-3.H},
respectively, and $n=r(F)+1$ in $H_\ell$.

\subsection{CLT variance of $A_\ell$}

For the CLT variance of $A_\ell$, it was shown in Theorem 5.1 of \cite{FUY}
that
\begin{align} \label{4.1.1}
\lim_{\ell, m \to \infty, m/\ell_*^d \to \rho} 
\ell_*^{-d} \Delta_{\ell,m,\zeta} (\th\cdot A_\ell)
   = 2 (\th\cdot \widehat{c}^{-1}(\rho)\th) \chi^2(\rho),
\end{align}
uniformly in $\rho\in [0,1]$ and $\eta\in \mathcal{X}$,
where $\zeta=\eta|_{\La(\ell)^c}$. 
This is the key in the gradient replacement and shown based on a
characterization of closed forms given in Corollary
4.1 of \cite{FUY}.  However, this is not sufficient for our purpose.
Then, recently more detailed analysis has been developed 
by applying the method in the quantitative
homogenization theory and, as a by-product, 
a decay rate in the limit \eqref{4.1.1} was obtained as in the following theorem;
see (7.12) in \cite{FGW}.

\begin{thm}  \label{Theorem 5.1} {\rm (\cite{FGW})}
For $\th \in \R^d: |\th|=1$, $m\in [0,\ell_*^d]\cap\Z$
and $\zeta \in \mathcal{X}_{\La(\ell)^c}$, set
$$
Q_\ell^{(4)}(\th,m,\zeta) =  \ell_*^{-d} \Delta_{\ell,m,\zeta} (\th\cdot A_\ell)
   - 2 (\th\cdot \widehat{c}^{-1}(m/\ell_*^d)\th) \chi^2(m/\ell_*^d).
$$
Then, there exist $C>0$ and $\a_1>0$ such that
\begin{align}  \label{eq:Q(4)}
|Q_\ell^{(4)}(\th,m,\zeta)|\le C\ell^{-\a_1}
\end{align}
uniformly in  $\th \in \R^d: |\th|=1$, $m\in [0,\ell_*^d]\cap\Z$ and
$\zeta \in \mathcal{X}_{\La(\ell)^c}$.
\end{thm}

\subsection{CLT variance of $B_\ell-H_\ell$ and covariance of
$A_\ell$ and $B_\ell-H_\ell$}
\label{sec:5.3-D}

We calculate the CLT variance of $B_\ell-H_\ell$ and the CLT covariance 
between $A_\ell$ and $B_\ell-H_\ell$, and provide refined error estimates
for them.  These are put together in Corollary \ref{Corollary 5.1} below.

\begin{prop} \label{Proposition 5.1}
For  $F\in \mathcal{F}_0^d$ and $\th, \tilde{\th} \in \R^d: |\th| = |\tilde\th|=1$,
set
\begin{align*}
Q_\ell^{(5)}(F) & \equiv Q_\ell^{(5)}(F;\th,m,\zeta) \\
& :=   \ell_*^{-d} \Delta_{\ell,m,\zeta} (\th\cdot(B_{\ell,\zeta}-H_{\ell,\zeta,F}))
   - \frac12 \th\cdot \widehat{c}(m/\ell_*^d;F)\th,  \\  
Q_\ell^{(6)}(F) & \equiv Q_\ell^{(6)}(F;\th,\tilde\th,m,\zeta) \\
& :=  \ell_*^{-d} \Delta_{\ell,m,\zeta} (\tilde{\th}\cdot A_{\ell}, 
    \th\cdot(B_{\ell,\zeta}-H_{\ell,\zeta,F}))
   -  (\th\cdot \tilde{\th}) \chi(m/\ell_*^d).
\end{align*}
Then, we have
\begin{align}
&  |Q_\ell^{(5)}(F)| \le  C r(F)^d (1+\vertiii{F}_{0,\infty}^2) \ell^{-1},  \label{5.1} 
       \\  
&  |Q_\ell^{(6)}(F)| \le C r(F)^{2d} \| F\|_\infty \ell^{-d} +C\ell^{-1},
    \label{5.2}
\end{align}
uniformly in  $\th, \tilde{\th} \in \R^d: |\th| = |\tilde\th|=1$,
$m\in [0,\ell_*^d]\cap\Z$  and  $\zeta\in\mathcal{X}_{\La(\ell)^c}$, 
where $\th\cdot\tilde{\th}$  stands for the inner product of  $\th$
and  $\tilde{\th}\in \R^d$  which has been denoted by
$(\th,\tilde{\th})$  in the previous sections.  Recall \eqref{eq:3.2-P}
for $\vertiii{F}_{0,\infty}$.
\end{prop}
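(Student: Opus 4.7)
My starting observation is that $\th\cdot(B_{\ell,\zeta}-H_{\ell,\zeta,F})$ lies in the image of $L_{\La(\ell),\zeta}$ with an explicit preimage. Setting
\[
\phi_F(\xi) := \sum_{z\in\La(\ell)} z\,\xi_z + \sum_{z\in\La(\ell-n)}\t_z F,
\]
the identities \eqref{eq-3.B} and \eqref{eq-3.H} yield
$\th\cdot(B_{\ell,\zeta}-H_{\ell,\zeta,F}) = -L_{\La(\ell),\zeta}(\th\cdot\phi_F)$,
so $(-L_{\La(\ell),\zeta})^{-1}\th\cdot(B-H) = \th\cdot\phi_F$ modulo the kernel of $L_{\La(\ell),\zeta}$ on $\mathcal{X}_{\La(\ell),m}$. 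The two variances therefore reduce to
\[
\Delta_{\ell,m,\zeta}(\th\cdot(B-H)) = \tfrac12\sum_{\{x,y\}\in(\La(\ell))^*}\bigl\langle c_{x,y}\bigl(\pi_{x,y}(\th\cdot\phi_F)\bigr)^2\bigr\rangle_{\La(\ell),m}
\]
by the carr\'e du champ identity, and $\Delta_{\ell,m,\zeta}(\tilde\th\cdot A_\ell,\th\cdot(B-H)) = \langle\tilde\th\cdot A_\ell,\th\cdot\phi_F\rangle_{\La(\ell),m}$. This reduces both claims to explicit static computations under $\nu_{\La(\ell),m}$.

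\textbf{Proof of \eqref{5.1}.} I would split the bond sum into \emph{bulk bonds} $b=\{x,y\}$ with $\La_{r(F),x}\cup\La_{r(F),y}\subset\La(\ell)$ and \emph{boundary bonds} (numbering $O(r(F)\ell^{d-1})$). For a bulk bond with $y-x=e$, $\pi_b(\th\cdot\phi_F)$ is exactly the translate to $b$ of $\th\cdot[-e(\eta_e-\eta_0)-\pi_{0,e}\sum_y\t_y F]$, i.e.\ the integrand of the variational formula \eqref{1.3} defining $(\th,\widehat c(\rho;F)\th)$. Applying the equivalence of ensembles \eqref{eq:3EE} to each such local integrand (support of size $O(r(F)^d)$, $L^\infty$-norm $O(1+\vertiii{F}_{0,\infty}^2)$) replaces $\langle\cdot\rangle_{\La(\ell),m}$ by $\langle\cdot\rangle_{\nu_\rho}$ with $\rho=m/\ell_*^d$ and total error $\ell_*^d\cdot O(r(F)^d\ell^{-d})(1+\vertiii{F}_{0,\infty}^2)$, so the bulk sum equals $\tfrac{\ell_*^d}{2}(\th,\widehat c(m/\ell_*^d;F)\th)$ up to that error; the boundary bonds contribute an additional $O(r(F)\ell^{d-1})(1+\vertiii{F}_{0,\infty}^2)$. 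Dividing by $\ell_*^d$ produces \eqref{5.1}.

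\textbf{Proof of \eqref{5.2}.} Decompose $\phi_F = \phi_0 + h_F$ with $\phi_0 := \sum_{z\in\La(\ell)}z\xi_z$ and $h_F := \sum_{z\in\La(\ell-n)}\t_z F$. For the $\phi_0$-piece, the exact canonical covariance $\mathrm{Cov}_{\La(\ell),m}(\xi_x,\xi_y) = \chi(\rho)[\delta_{xy} - (1-\delta_{xy})/(\ell_*^d-1)]$ combined with the telescoping representation of $A_{\ell,i}$ as a signed sum over two boundary slabs of size $\ell_*^{d-1}$ gives by direct computation
\[
\ell_*^{-d}\langle\tilde\th\cdot A_\ell,\th\cdot\phi_0\rangle_{\La(\ell),m} = (\th\cdot\tilde\th)\chi(\rho) + O(\ell^{-1}),
\]
the $O(\ell^{-1})$ coming from the boundary/volume ratio in the box. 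For the $h_F$-piece, the choice $n=r(F)+1$ ensures $\supp(\t_z F)\subset\La(\ell-1)$ for every $z\in\La(\ell-n)$, hence disjoint from the two boundary slabs carrying $A_\ell$; under the product measure $\nu_\rho$ this forces $\langle\tilde\th\cdot A_\ell,\th\cdot\t_z F\rangle_\rho=0$ for every such $z$. The entire $h_F$-contribution therefore comes from the quantitative rate in the equivalence of ensembles: decomposing $A_\ell$ into its $O(\ell^{d-1})$ centered boundary increments and applying Theorem~4.1 of \cite{CM} to each pairing with $\t_z F$, of effective support $O(r(F)^d)$, yields the bound $Cr(F)^{2d}\|F\|_\infty\ell^{-d}$ and completes \eqref{5.2}.

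\textbf{Main obstacle.} The crux of the argument is this last step. Because the grand-canonical covariance vanishes identically by support disjointness, the whole $h_F$-contribution must be extracted from the \emph{rate} in the equivalence of ensembles, and the naive product bound $\|A_\ell\|_\infty\|h_F\|_\infty\ell_*^{-d}\sim\ell^{-1}\|F\|_\infty$ is far too weak to yield the $\ell^{-d}$ decay. The fix is to exploit the telescoping boundary structure of $A_\ell$ so that the covariance becomes a sum of $O(\ell^{d-1})$ pairings of centered local functions of effective support $O(r(F)^d)$, and to invoke \cite{CM} termwise so that the $\ell^{-d}$ gain is preserved after summation. This quantitative bookkeeping is the main new ingredient compared with Proposition~5.1 of \cite{FUY}, whose qualitative proof relied only on the unquantified equivalence of ensembles.
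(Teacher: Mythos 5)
Your reduction to the two identities (the carr\'e du champ formula for $\Delta_{\ell,m,\zeta}(\th\cdot(B-H))$ and the cross-covariance formula for $\Delta_{\ell,m,\zeta}(\tilde\th\cdot A_\ell, \th\cdot(B-H))$) coincides with the paper's starting point (5.3)--(5.4). Your proof of \eqref{5.1} (split into bulk and boundary bonds, termwise equivalence of ensembles) follows essentially the same route as the paper and is fine. For the $\phi_0$-piece of \eqref{5.2}, using the exact canonical covariance formula $\text{Cov}_{\La(\ell),m}(\xi_x,\xi_y)=\chi(\rho)[\delta_{xy}-(1-\delta_{xy})/(\ell_*^d-1)]$ and the telescoped boundary slab representation of $A_\ell$ is a perfectly sound alternative to the paper's direct exchangeability argument, and gives the same leading term $(\th\cdot\tilde\th)\chi(\rho)+O(\ell^{-1})$.

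Your treatment of the $h_F$-piece of \eqref{5.2} has a genuine gap in the bookkeeping. After telescoping, $A_\ell$ is a sum over $O(\ell^{d-1})$ boundary increments $a_j$, while $h_F=\sum_{z\in\La(\ell-n)}\t_z F$ has $O(\ell^d)$ terms. If you apply the equivalence-of-ensembles rate of \cite{CM} \emph{termwise to each pair} $(a_j,\t_z F)$, you get $O(\ell^{2d-1})$ pairs, each contributing $O(r(F)^d\ell^{-d}\|F\|_\infty)$ since the grand-canonical covariance vanishes; after multiplying by $\ell_*^{-d}$ this yields only $O(r(F)^d\ell^{-1}\|F\|_\infty)$, which is far weaker than the claimed $O(r(F)^{2d}\ell^{-d}\|F\|_\infty)$ and (because of the factor $r(F)^d\|F\|_\infty$) is not absorbed by the $C\ell^{-1}$ term in \eqref{5.2}. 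Alternatively, pairing each $a_j$ with the full $h_F$ gives only $O(\ell^{d-1})$ pairings, but then the support of $h_F$ is $O(\ell^d)$ and \cite{CM} produces an $O(1)$ per-pair error. Either reading of your counting fails.

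The missing ingredient is \emph{exchangeability under the canonical measure} $\nu_{\La(\ell),m}$, which you invoke only under $\nu_\rho$. Two ways to close the gap, both hinging on it. (i) The paper's route: before telescoping, $\langle\t_y\Psi_i\,\t_x F\rangle_{\La(\ell),m}=0$ whenever $y,y+e_i\notin\t_x(\text{supp}F)$, by the swap $\eta\mapsto\eta^{y,y+e_i}$; this kills all but $O(\ell^d r(F)^d)$ pairs, and then the termwise \cite{CM} error gives exactly $\ell_*^{-d}\cdot O(\ell^d r(F)^d)\cdot O(r(F)^d\ell^{-d}\|F\|_\infty) = O(r(F)^{2d}\ell^{-d}\|F\|_\infty)$. (ii) Even more directly in your telescoped language: for $a_j=\eta_{w+\ell e_i}-\eta_{w-\ell e_i}$ and any $z\in\La(\ell-n)$, the swap $\eta\mapsto\eta^{w+\ell e_i,w-\ell e_i}$ leaves $\t_z F$ invariant (disjoint support) and flips $a_j$, so $\langle a_j\,\t_z F\rangle_{\La(\ell),m}=0$ exactly, without any appeal to \cite{CM}. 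You must use one of these observations; the termwise \cite{CM} estimate alone does not give the stated decay.
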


The proof of Proposition \ref{Proposition 5.1} is given based on known results
for Kawasaki dynamics in \cite{FUY}, providing careful error estimates.  
The proof is simpler than that for $A_\ell$, since
$B_{\ell,\zeta}-H_{\ell,\zeta,F}$ is represented as $L_{\La(\ell),\zeta} G$
for some function $G$ and this makes a cancellation with
$(-L_{\La(\ell),\zeta})^{-1}$ in the CLT variance and the CLT covariance.
In particular, the
quantitative result from the recent homogenization theory is unnecessary 
for the proof of this proposition.  It is postponed to Section \ref{sec:9.5}.

\subsection{CLT variance of $D A_\ell - (B_\ell-H_\ell)$}

The following corollary is shown by combining Theorem \ref{Theorem 5.1} and
Proposition \ref{Proposition 5.1}, and gives a quantitative refinement of
Corollary 5.1 of \cite{FUY}.

\begin{cor}  \label{Corollary 5.1}
Taking the supremum  over all $\th \in \R^d: |\th|=1$,
$m \in [0,\ell_*^d]\cap\Z$ and $\zeta \in \mathcal{X}_{\La(\ell)^c}$, set
\begin{align*}
Q_\ell(F) := \sup_{|\th|=1, m, \zeta}
\bigg| \ell_*^{-d} & \Delta_{\ell,m,\zeta} \big(\th\cdot 
   \{ D(m/\ell_*^d)A_{\ell} -(B_{\ell,\zeta}-H_{\ell,\zeta,F})\}\big) \\
&   
- \frac12 \th\cdot R(m/\ell_*^d;F)\th \bigg|, 
\end{align*}
where  $D(\rho)= \widehat{c}(\rho)/2\chi(\rho)$  is the diffusion matrix
defined in \eqref{1.5} and
$R(\rho;F) = \widehat{c}(\rho;F) - \widehat{c}(\rho)$ is the error defined in 
\eqref{eq:3.R} for $\widehat{c}(\rho)$, which is determined by
\eqref{1.3}.  Then, $Q_\ell(F)$ is estimated as
$$
Q_\ell(F) \le 
C (1+ r(F)^{2d}) (1+\vertiii{F}_{0,\infty}^2) \ell^{-1}+ C \ell^{-\a_1},
$$
for some $C>0$ and $\a_1>0$.
\end{cor}

\begin{proof}
Omitting $m/\ell_*^d$, we simply write $D, \widehat c,
\widehat c(F)$, $R(F)$ and $\chi$ for $D(m/\ell_*^d), \widehat c(m/\ell_*^d),
\widehat c(m/\ell_*^d;F)$, $R(m/\ell_*^d;F)$ and $\chi(m/\ell_*^d)$, respectively.
Then, noting that $D$ is symmetric, we have
\begin{align*}
\ell_*^{-d} & \Delta_{\ell,m,\zeta} \big(\th\cdot 
   \{ DA_{\ell} -(B_{\ell,\zeta}-H_{\ell,\zeta,F})\}\big) \\
& = \ell_*^{-d} \Big\{ \Delta_{\ell,m,\zeta} \big(D\th\cdot A_\ell \big)
-2 \Delta_{\ell,m,\zeta} \big(D\th\cdot A_\ell,
 \th\cdot (B_{\ell,\zeta}-H_{\ell,\zeta,F})  \big)  \\
& \hskip 40mm
+ \Delta_{\ell,m,\zeta} \big(\th\cdot (B_{\ell,\zeta}-H_{\ell,\zeta,F}) \big) \Big\}
\\
& = \big\{Q_\ell^{(4)}(D\th, m,\zeta) + 2 \big(D\th\cdot \widehat c^{-1}D\th\big) 
  \, \chi^2\big\}  - 2 \big\{ Q_\ell^{(6)}(F;\th,D\th, m,\zeta) 
  + \big(\th\cdot D \th\big) \, \chi
\big\} \\
&  \quad + \big\{ Q_\ell^{(5)}(F;\th, m,\zeta) + \frac12 \big(\th\cdot \widehat c(F)
\th\big)\big\}.
\end{align*}
However, by $D=  \widehat c/2\chi$ and $\widehat c(F)- \widehat c
=R(F)$, the term except for $Q^{(4)}$--$Q^{(6)}$ on the right-hand side
is rewritten as
\begin{align*}
 2 \big(D\th\cdot \widehat c^{-1}D\th\big) 
  \, \chi^2 -2 \big(\th\cdot D\th\big) \, \chi
+ \frac12 \big(\th\cdot \widehat c(F)\th\big)
= \frac12 \th \cdot R(F)\th.
\end{align*}
Therefore, the bound for $Q_\ell(F)$ follows from \eqref{eq:Q(4)},
\eqref{5.1} and \eqref{5.2} by changing the constant $C>0$ if necessary.
\end{proof}

\subsection{Minimizing sequence $\{\Phi_n\}$ for $\widehat{c}(\rho)$ and
its estimate}

The following proposition is shown from Lemma 4.4 and Proposition 6.9 of 
\cite{FGW} (see also Theorem 1.2 and Section 7 of \cite{FGW}).
Recall \eqref{eq:3rF} for $r(F)$, the variational formula \eqref{1.3}
for $\widehat{c}(\rho)$ and \eqref{eq:3.R} for $R(\rho;F)$.

\begin{prop}  {\rm (\cite{FGW})}\label{prop:5.4}
There exists a sequence of local functions $\Phi_n=\Phi_n(\eta)\in 
\mathcal{F}_0^d$ on $\mathcal{X}$  such that
\begin{equation}  \label{eq:Fn}
r(\Phi_n)\le n, \quad \sup_{\rho\in [0,1]} \| R(\rho; \Phi_n) \| \le C_2 n^{-\a_2},
\quad \|\Phi_n\|_\infty \le C_2 n^2 \log n,
\end{equation}
for some $C_2>0$ and $\a_2>0$.
\end{prop}

This proposition is a refinement of Lemma 2.1 of \cite{FUY}:
$$
\inf_{F\in {\mathcal{F}}_0^d}~\sup_{\rho\in [0,1]} \Vert R(\rho;F)\Vert = 0,
$$
which is uniform in $\rho$.  Proposition \ref{prop:5.4}
provides the decay rate in $n$ of $R(\rho;F)$ approximated by $F$ 
of support size at most $n$: $r(F)\le n$.

\section{Bound on $h_N(t)$ combining with large deviation estimate}
\label{section:6}

In this section, we first summarize the estimates obtained in
Sections \ref{Section 3}--\ref{sec:5} in the form of Lemma \ref{Lemma 3.2}.
Then, we apply the entropy inequality and a large deviation type upper bound 
under $\psi_t$ with fine error estimates given in Lemma \ref{Theorem 3.3}.
This shows Proposition \ref{Theorem 2.1}.
One can observe that the main term called $g(t) =g_{\de,K}(t)$ in the estimate
in this proposition can be dropped if we determine the leading term of $\psi_t$
according to the hydrodynamic equation \eqref{1.6} and take $\de>0$ small; 
see Lemma \ref{Lem:2.3}.
The equation \eqref{1.6} is used only for this lemma.  

\subsection{Consequence of the results of Sections \ref{Section 3},
 \ref{sec:3.3} and \ref{sec:5}}

Lemma \ref{Lemma 3.1} (calculation of $\partial_t h_N(t)$),
Corollary \ref{One-block-cor} (refined one-block estimate)
and Theorem \ref{Theorem 3.2} (gradient
replacement, relying on the estimates for the CLT variances in Section
\ref{sec:5}) are summarized in the following lemma.
Recall $h_N(t) = N^{-d} H(f_t|\psi_t)$ in \eqref{2.2} and 
$\psi_t=\psi_{\la(t,\cdot),F}$ in \eqref{eq:2psit}.
The constant $C=C_T>0$ may depend on $T$.

\begin{lem}  \label{Lemma 3.2} 
Assume $\la \in C^{1,3}([0,T]\times\T^d)$, $F \in\mathcal{F}_0^d$
and the conditions \eqref{eq:2.F}, \eqref{3.26-A} and \eqref{3.12-C}.
Then, setting $\rho(t,v) = \bar\rho(\la(t,v))$, we have
\begin{align}  \label{eq:6.1-h}
h_N(t) &  \le  h_N(0)+ \int_0^t  E^{f_s}[W_s]\,ds + 
Q_{N,\ell,\b,K}^{\Om_1}(\la,F)   \\
 &  + C (\b+1) \,\|\partial\la\|_\infty^2 \, \sup_{\rho} \|R(\rho; F)\|
    +    Q_N^{En}(\la,F)+ Q_{N,\ell}^{\Om_2}(\la,F),   \notag
\end{align}
for every $\b>0$ and $1\le \ell \le N/2$,
where  $\| R \|$  denotes the operator norm of matrix  $R$
(recall \eqref{eq:3.R} for $R(\rho;F)$), $Q_N^{En}(\la,F)$ is as in 
Lemma \ref{Lemma 3.1} (multiplied by $T$) and
\begin{align}  \label{3.QOM1}
Q_{N,\ell,\b,K}^{\Om_1}(\la,F) := \frac{C}{\b}K+\b^2 Q_{N,\ell}^{(1)}(\la,F)
 +\b Q_\ell^{(2)}(\la,F)+Q_{N,\ell}^{(3)} (\la),
\end{align}
see Theorem \ref{Theorem 3.2} for $Q_{N,\ell}^{(1)}$, $Q_\ell^{(2)}$ 
and Lemma \ref{Lemma 3.4} for $Q_{N,\ell}^{(3)}$.
The function $W_t$ is defined by
\begin{align*}
W_t(\eta) = & -  N^{-d}  \sum_{x\in\T_N^d} \dot{\la}(t,x/N)
   \{ \bar{\eta}_x^\ell - \rho(t,x/N)\}   
       \\  
 &  +  N^{-d} \sum_{x\in\T_N^d} \Tr \left(\partial^2\la(t,x/N)
   \{ P(\bar{\eta}_x^\ell) - P(\rho(t,x/N))\}\right)  
       \\  
 &  +  \frac{N^{-d}}{2}  \sum_{x\in\T_N^d} \left(\partial\la(t,x/N),
        \{\widehat{c}(\bar{\eta}_x^\ell) - \widehat{c}(\rho(t,x/N))\}
            \partial\la(t,x/N)\right)\\
            & + N^{-d} K \sum_{x\in\T_N^d} \si_G(\bar{\eta}_x^\ell;t,x/N).
\end{align*}
Recall \eqref{eq:sigG} for the function $\si_G=\si_G(u;t,v)$.
The error $Q_{N,\ell}^{\Om_2}(\la,F)$ is estimated as
\begin{align}  \label{3.QOM2}
|Q_{N,\ell}^{\Om_2}(\la,F)| \le & C\big(N^{-1}K^{1/2}\ell^{(d+2)/2} 
+ \ell^{-d}(1+r(F)^d)\big) \\
& \quad \times
    \Big( \|\dot{\la}\|_\infty+ \|\partial\la\|_\infty^2 (1+\vertiii{F}_{0,\infty})^2
        + \|\partial^2\la\|_\infty (1+\vertiii{F}_{1,\infty}) +K \Big)     \notag \\
& + C N^{-1}(\|\partial^3\la\|_\infty
+ \|\partial^2\la\|_\infty\|\partial\la\|_\infty
+ \|\partial\la\|_\infty^2 \|\partial \la \|_\infty).  \notag
\end{align}
In the above estimate, for each $\la_*>0$, the constant $C=C_{\la_*}$  can 
be taken uniformly for $\la=\la(t,v)$ such that $\|\la\|_\infty \le \la_*$.
\end{lem}

When we apply this lemma later in Lemma \ref{Lem:2.3}, we take
$\la(t,v) = \bar\la(\rho_K(t,v))$ with a solution $\rho_K$ of the
hydrodynamic equation \eqref{1.6}.
Under the condition \eqref{eq:1.12} for the initial value $\rho_0$, by
Lemma \ref{lem:max} (the comparison theorem for $\rho_K$)
and recalling \eqref{eq:2.4}, $\la_*$ can be taken to be uniform in $K$.

\begin{proof}
The proof is similar to Lemma 3.2 of \cite{FUY}, but compared to it,
we have the Glauber part and also
require to derive an appropriate error estimate.  

First, we integrate the estimate for $\partial_t h_N(t)$ shown in Lemma
\ref{Lemma 3.1} on the time integral $[0,t]$.  Then, for $\Om_1$, we apply 
Theorem \ref{Theorem 3.2} (with an error $\frac{C}{\b}K+\b^2 Q_{N,\ell}^{(1)}
 +\b Q_\ell^{(2)}$) and Lemma \ref{Lemma 3.4}
 (with an error $Q_{N,\ell}^{(3)}$), and obtain a 
replacement of $\int_0^t E^{f_s}[ \Om_1] \, ds$
by the following integral:
\begin{align*}
\int_0^t &  \bigg\{  E^{f_s} \bigg[ N^{-d}
\sum_{x\in\T_N^d} \Tr \left( \partial^2\la(s,x/N) P(\bar{\eta}_{x}^\ell)\right)
     \bigg]   \\
  & \quad   +  E^{f_s} \bigg[  \frac{\b N^{-d}}{2}
\sum_{x\in\T_N^d} \left(\partial\la(s,x/N), 
   R(\bar{\eta}_{x}^\ell;F)\partial\la(s,x/N)
     \right) \bigg]  \bigg\} \, ds
\end{align*}
with an error $Q_{N,\ell,\b,K}^{\Om_1}(\la,F)$ defined in \eqref{3.QOM1}.
The second term in the above formula is bounded by 
$C \b\,\|\partial\la\|_\infty^2 \, \sup_{\rho} \|R(\rho; F)\|$ and this is counted
on the right-hand side of \eqref{eq:6.1-h} for $h_N(t)$.

For $\Om_2$, by Corollary \ref{One-block-cor}, $\int_0^t E^{f_s}[\Om_2] \, ds$ 
can be replaced by the expression in \eqref{eq:hatc-D}:
\begin{align*}  
\int_0^t E^{f_s} & \bigg[N^{-d} \sum_{x\in\T_N^d} \Big\{ -\dot{\la}(s,x/N)
   \bar{\eta}_{x}^\ell  \\
& \phantom{\sum_{x\in\Ga_N}} 
+ \frac12 \left(\partial\la(s,x/N),
        \widehat{c}(\bar{\eta}_{x}^\ell;F)\partial\la(s,x/N)\right)
        + K \si_G(\bar{\eta}_x^\ell;s,x/N)
           \Big\} \bigg] \, ds   \notag
\end{align*}
with the error $Q_{N,\ell}^{\Om_2,1}(\la,F)$ defined in \eqref{3.Q-Om21}.
This gives the first part
in the error estimate of $|Q_{N,\ell}^{\Om_2}(\la,F)|$.
Note that $\widehat{c}(\bar{\eta}_{x}^\ell;F)$ in the above expression
(i.e.\ \eqref{eq:hatc-D}) can be
replaced by $\widehat{c}(\bar{\eta}_{x}^\ell)$ with an error 
$C \,\|\partial\la\|_\infty^2 \, \sup_{\rho} \|R(\rho; F)\|$.

The reason that we have the second part in the error estimate
of $|Q_{N,\ell}^{\Om_2}(\la,F)|$ in \eqref{3.QOM2} is as follows.  
Recalling that $\rho(t,v) = \bar{\rho}(\la(t,v))$, we have the identity
\begin{align}  \label{3.IBP}
-\int_{\T^d} \Tr \big(\partial^2\la(t,v) P(\rho(t,v))\big) dv  
=  \frac1{2} \int_{\T^d} \big(\partial\la(t,v),\widehat{c}(\rho(t,v))
 \partial\la(t,v)\big)dv,
\end{align}
This is shown by integration by parts and noting 
\begin{equation}
\frac{\partial \bar{\rho}}{\partial\la} = \chi(\rho),   \label{2.9}
\end{equation}
cf.\ \eqref{eq:2.5-C}.
The terms with $\rho(t,x/N)$ in the second and third terms of $W$
cancels by this identity with an error bounded by
$$
C N^{-1}(\|\partial^3\la\|_\infty+
\|\partial^2\la\|_\infty \|\partial\rho\|_\infty 
+ \|\partial^2\la\|_\infty\|\partial\la\|_\infty
+ \|\partial\la\|_\infty^2 \|\partial \rho \|_\infty),
$$
which appears when we discretize the above two integrals in \eqref{3.IBP}; 
note that $P, \widehat{c}$ are bounded and $P, \widehat{c} \in C^\infty((0,1))$.
Note that $\|\partial \rho \|_\infty$ is uniformly bounded by 
$\|\partial \la \|_\infty$ for $\la$ such that $\|\la\|_\infty \le \la_*$.
\end{proof}

\subsection{Entropy inequality and large deviation error estimate}

In Lemma \ref{Lemma 3.2}, $E^{f_t}[W_t]$ is an expectation under 
$P^{f_t} = f_t\,d\nu^N$.
We apply the entropy inequality to reduce it to the expectation under
$P^{\psi_t} = \psi_t\,d\nu^N$:
\begin{equation}  \label{eq:3-ent}
E^{f_t}[W_t] \le \frac{K}{\de N^d} \log E^{\psi_t}[ e^{\de N^d K^{-1} W_t}] 
+ \frac{K}{\de}  h_N(t),
\end{equation}
for every $\de > 0$ (different from $\de>0$ in $\overline{K}_\de(N)$).  
We study the asymptotic behavior of
the first term on the right-hand side via a large deviation type upper 
bound with an appropriate error estimate under
$\psi_t\, d\nu^N$; see Lemma \ref{Theorem 3.3} below.

For  $\la(\cdot) \in C^1(\T^d)$  and  $F = F(\eta) \in \mathcal{F}_0^d$,
dropping the $t$-dependence in \eqref{eq:2psit}, the local equilibrium state  
$\psi_{\la(\cdot),F}^N(\eta) d\nu^N$ of second order approximation
is a probability measure on  $\mathcal{X}_N$ defined by
\begin{equation} \label{B.8}
\psi_{\la(\cdot),F}^N (\eta) = Z^{-1}   
   \exp\bigg\{ \sum_{x\in \T_N^d} \la(x/N) \eta_x
      + \frac1{N} \sum_{x\in\T_N^d} \left(\partial\la(x/N), \tau_x F(\eta)
           \right) \bigg\}, 
\end{equation}
for  $\eta\in\mathcal{X}_N$,  where   $Z = Z_{\la(\cdot),F,N}$  is the 
normalization constant with respect to $\nu^N$.  Then we
have the following large deviation type upper bound for  
$\psi_{\la(\cdot),F}^N\, d\nu^N$.
A similar result is shown in Lemmas 5 and 7 of \cite{14} in the case of $F \equiv 0$
and Theorem 3.3 of \cite{FUY}. But here, we
give its error estimate.

The Bernoulli measure on $\mathcal{X}= \{0,1\}^{\Z^d}$ 
associated with the chemical potential  $\la \in \R$  is denoted by
$\nu_\la$, that is, $\nu_\la= \bar\nu_\la^{\otimes \Z^d}$, where
$\bar\nu_\la$ is a probability measure on $\{0,1\}$ defined by
$\bar\nu_\la(\eta) := e^{\la\eta}/(e^\la+1)$, $\eta=0, 1$.
We abuse the notation $\nu_\la$ and $\nu_\rho$ (defined in Section
\ref{Section 1.1}) which have different
parametrizations but are clearly distinguishable.  Indeed, $\nu_\la = 
\nu_{\bar{\rho}(\la)}$, i.e.\ $\nu_\rho$ with $\rho=\bar\rho(\la)$
defined by \eqref{2.3}.

The rate function for the large deviation principle for the Bernoulli measure
$\nu_{\la}$  is denoted by  $I(u;\la)$, namely, for $u\in [0,1]$,
\begin{align}\label{2.4} 
\begin{aligned}
& I(u;\la) = - \la  u - q(u) + p(\la),    \\
 & p(\la) = \log (e^\la + 1), \quad
 q(u)   = -\{u\log u +(1-u)\log(1-u)\},
\end{aligned}
\end{align}
see Appendix B for more details.

When we apply the following lemma later, we take $K=K(N)$, but in this lemma,
we consider $K$ as a parameter in the function $G(v,\rho)$,
 such that $\frac1K\in (0,1]$, and derive
an estimate which is uniform in $\frac1K$.

\begin{lem} \label{Theorem 3.3}
For every  $G_1(v,\rho), G_2(v,\rho) \in C^1(\T^d \times [0,1])$, we have
\begin{align*} 
& N^{-d}  \log E^{\psi_{\la(\cdot),F}^N} [ \exp \widetilde{G}(\eta) ]  \\
 \le &  \sup_{\rho(v) \in C(\T^d ; [0,1])}
    \int_{\T^d} \{ G(v,\rho(v)) 
    -I(\rho(v);\la(v))\} \, dv + Q_{N,\ell}^{LD}(\la,F;G),
\end{align*}
where $G(v,\rho)= G_1(v,\rho)+\frac1K G_2(v,\rho)$ and
\begin{equation} \label{eq:3.tildeG}
\widetilde{G}(\eta) \equiv \widetilde{G}_{N,\ell}(\eta)
= \sum_{x\in \T_N^d}  G(x/N,\bar{\eta}_x^{\ell}).
\end{equation}
The error term $Q_{N,\ell}^{LD}(\la,F;G)$ has an estimate which is uniform 
in $\frac1K\in (0,1]$:
\begin{align*}
|Q_{N,\ell}^{LD}(\la,F;G)| \le & C N^{-1} \|\partial\la\|_\infty \| F\|_\infty
 +  C N^{-1}\ell  \Big( \|\partial\la\|_\infty 
+  \|\partial G_1\|_\infty
    + \|\partial G_2\|_\infty \Big) \\
    & + C \ell^{-d} \Big(\log \ell +  \|\la\|_\infty
+  \|\partial_\rho G_1\|_\infty
    + \|\partial_\rho G_2\|_\infty \Big), 
\end{align*}
where $\partial G_i = \partial_v G_i$ for $i=1,2$.
\end{lem}

The proof of this lemma is rather standard but, for the sake of completeness, 
it will be given in Appendix B. 
We will apply this lemma for the first term on the right-hand
side of \eqref{eq:3-ent}, which determines $G_1$ and $G_2$ as
\begin{equation}  \label{3.deKW}
\de N^d K^{-1}W(\eta) = \sum_{x\in \T_N^d}
\big\{G_1(x/N,\bar\eta_x^\ell) + K^{-1}G_2(x/N,\bar\eta_x^\ell)\big\},
\end{equation}
see Proposition \ref{Theorem 2.1} below.  The definition of $\widetilde{G}$ 
was slightly different from \eqref{eq:3.tildeG} in Theorem 3.3 of \cite{FUY},
but \eqref{eq:3.tildeG} is more convenient for our application.

\subsection{Application to the estimate for $h_N(t)$}
\label{sec:6.3-D}

The next proposition is immediate by combing Lemma \ref{Lemma 3.2}, 
the entropy inequality 
\eqref{eq:3-ent} and Lemma \ref{Theorem 3.3}.  A corresponding result
was given for $h(t) = \varlimsup_{N\to\infty} h_N(t)$ in Theorem 2.1
of \cite{FUY} (the error estimate of $o(1)$ was sufficient in \cite{FUY}).  But, 
in the present setting, for the reason already noted, we need a bound
for $h_N(t)$ with an appropriate error estimate.

To state the proposition, let us define $g(t)$ as follows:
\begin{align*}
& g(t) \equiv g_{\de, K}(t) = \sup_{u(v) \in C(\T^d;[0,1])} 
   \int_{\T^d}
   \{\frac{\de}K \cdot \si(u(v);t,v) - I(u(v);\la(t,v)) \}\, dv,  
        \\
&  \si(u;t,v) \equiv \si_K(u;t,v)=  -\dot{\la}(t,v)\{ u-\rho(t,v)\} +
   \Tr \left(\partial^2\la(t,v) \{ P(u) - P(\rho(t,v))\}\right)
          \phantom{\frac12}      \\
&  \qquad\qquad\qquad \qquad\qquad 
    + \frac12 \left(\partial\la(t,v),
        \{\widehat{c}(u) - \widehat{c}(\rho(t,v))\} \partial\la(t,v)
    \right)  + K \si_G(u;t,v).
\end{align*}
Recall $\rho(t,v)= \bar\rho(\la(t,v))$, Lemma \ref{Lemma 3.4} for $P(\rho)$, 
\eqref{eq:sigG} for $\si_G(u;t,v)$ and \eqref{2.4} for $I(u;\la)$.

In view of \eqref{3.deKW} and $W_t$ given in Lemma \ref{Lemma 3.2}, we take
\begin{align}
G_1(v,\rho) & = \de\cdot \si_G(\rho;t,v),  \notag \\
G_2(v,\rho) & = \de \Big[ -\dot{\la}(t,v) \{ \rho - \rho(t,v)\}   
       +  \Tr \left(\partial^2\la(t,v) \{ P(\rho) - P(\rho(t,v))\}\right)   
       \label{eq:3-G-2}\\
   & \qquad\qquad    +  \frac12 \left(\partial\la(t,v),
        \{\widehat{c}(\rho) - \widehat{c}(\rho(t,v))\}
            \partial\la(t,v)\right) \Big].  \notag
\end{align}
The corresponding error $\int_0^T Q_{N,\ell}^{LD}(\la(s),F;G) ds$ 
from Lemma \ref{Theorem 3.3}
integrated in $s$ will be denoted by $Q_{N,\ell,\de}^{LD}(\la,F)$
with $\de$ from $G_1$ and $G_2$.

\begin{prop}  \label{Theorem 2.1}
Assume $\la \in C^{1,3}([0,T]\times\T^d)$, $F \in\mathcal{F}_0^d$
and the conditions \eqref{eq:2.F}, \eqref{3.26-A}  and \eqref{3.12-C}.
Then, there exist  $\de_0, C > 0$ such
that for  any $0<\de<\de_0$, $\b > 0$ and $1\le \ell \le N/2$, we have
\begin{align}  \label{2.5} 
 h_N(t) \le & h_N(0) + \frac{K}{\de} \int_0^t g(s)\,ds
                + \frac{K}{\de} \int_0^t h_N(s)\,ds  \\                
              &  +Q_{N,\ell,\b,K}^{\Om_1}(\la,F)               
               +Q_{N,\ell,\de}^{LD}(\la,F)
            \notag         \\
  & + C (\b+1) \, \|\partial\la\|_\infty^2
          \sup_{\rho\in [0,1]} \|R(\rho; F)\| 
    +   Q_N^{En}(\la,F) + Q_{N,\ell}^{\Om_2}(\la,F).  \notag
\end{align}
The estimate is uniform in $\la$ such that $\|\la\|_\infty \le \la_*$ for
each $\la_*>0$ as we noted in Lemma \ref{Lemma 3.2}.
\end{prop}

In Section \ref{sec:6}, we analyze the error terms that appear in the 
estimate \eqref{2.5} in Proposition \ref{Theorem 2.1}.

\begin{rem}
Compared to Theorem 2.1 of \cite{FUY},
the term $K\si_G$, which results from the Glauber part, is new.
Since $K\si_G$ and therefore $\si$ contains a diverging factor $K$,
we change $\de\cdot\si$ to $\frac{\de}K \cdot\si$ in the definition of
$g$.  From this (and when applying the entropy inequality), we have 
$\frac{K}\de$ in front of $\int_0^t g ds$
in the estimate in Proposition \ref{Theorem 2.1}.
The reason for having $\frac{K}\de$ in front of $\int_0^t h_N ds$
in the estimate in Proposition \ref{Theorem 2.1} is that, when we apply
the entropy inequality, we need to pay for $K$ in $K\si_G$ by the entropy factor;
recall \eqref{eq:3-ent}.
\end{rem}

\subsection{$g(t)\le 0$ for small $\de>0$}

In Proposition \ref{Theorem 2.1}, $\la=\la(t,v)\in C^{1,3}([0,T]\times \T^d)$,
which determines $h_N(t)$ through $\psi_t=\psi_{\la(t,\cdot),F}$,
can be arbitrary.  However, in the next lemma,
we choose $\la(t,v)$ as $\la(t,v) = \bar\la(\rho_K(t,v))$ from the solution 
$\rho_K(t,v)$ of the hydrodynamic equation \eqref{1.6} and $\bar\la(\rho)$ 
in \eqref{eq:2.4}.  Under this choice of $\la(t,v)$ and if we take $\de>0$
small enough, the integral $\int_0^t g(s)ds$ in \eqref{2.5} in 
Proposition \ref{Theorem 2.1} becomes non-positive, so it can be dropped.
Recall the definition of $g(t)=g_{\de,N}(t)$ given
at the beginning of Section \ref{sec:6.3-D}, especially, it depends on $\de>0$.
This special choice of $\la(t,v)$ is used only for this lemma.  

\begin{lem}  \label{Lem:2.3}
Let $\rho(t,v)\equiv \rho_K(t,v)$ be the solution of the hydrodynamic equation
\eqref{1.6} with $\rho_0$ satisfying the condition \eqref{eq:1.12}, and define
$\la(t,v)\equiv \la_K(t,v)$ as $\la(t,v) = \bar\la(\rho(t,v))$ with
$\bar\la(\rho)$ in \eqref{eq:2.4}.  Then, we have
\begin{equation}
g(t) \equiv g_{\de_*, K}(t) \le 0,    \label{2.6}
\end{equation}
for all $K\ge 1$ by making  $\de_* > 0$  small enough.  
\end{lem}

When $K=0$, this lemma is shown in (2.6) in \cite{FUY} .
We can extend it to the case where $K\ge 1$, since we put $\frac{\de}K$
in front of $\si$ in $g(t)$.  The proof is postponed to Section \ref{sec:9.6}.

\section{Error analysis, choice of $\b, \ell, F$ and range of $K$}
  \label{sec:6}

In the estimate \eqref{2.5} obtained in Proposition \ref{Theorem 2.1}, we have
five error terms $Q_{N,\ell,\b,K}^{\Om_1}(\la,F)$, $Q_{N,\ell,\de}^{LD}(\la,F)$,
$Q_N^{En}(\la,F)$, $Q_{N,\ell}^{\Om_2}(\la,F)$ and 
$(\b+1)  \|\partial\la\|_\infty^2 \sup_{\rho\in [0,1]} \|R(\rho; F)\|$.
We choose $\la=\la(t,v)\equiv \la_K(t,v)$ and $\de=\de_*>0$ as in
Lemma \ref{Lem:2.3}.
We assumed the conditions \eqref{eq:2.F},  \eqref{3.26-A}  and \eqref{3.12-C}
to prove this proposition.  Recall that $\ell\in\N$ is taken as $\ell_* =2\ell+1<N$ 
as stated below \eqref{eq:3.saeta}.
This section cares all these quantities and conditions, and choose
$\b, \ell, F$ in an appropriate way, and determine the possible range of $K$.

\subsection{Summary of the error estimates applying Schauder estimate}  
\label{sec:error-summary}

Here we summarize the estimates on the error terms appearing in
\eqref{2.5} in Proposition \ref{Theorem 2.1}.  Note that $\la(t,v)\equiv \la_K(t,v)$
is now determined from the 
hydrodynamic equation \eqref{1.6} as in Lemma \ref{Lem:2.3}
and, in particular, the Schauder estimate \eqref{eq:3-Schauder}
is applicable.  We use the exponent 
\begin{align}  \label{eq:7-gamma}
\ga := \max\{3\a_0, \a_0+1\} >0
\end{align}
determined from $\a_0>0$ in \eqref{eq:3-Schauder}.  
In the following, the constant $C=C_T>0$ changes from line to line and depends 
on $T$.  Recall \eqref{eq:3.2-P} for $\vertiii{F}_{k,\infty}, k=0, 1,2$.

The term $Q_{N,\ell,\b,K}^{\Om_1}(\la,F)$  is the error for $\Om_1$ in
\eqref{eq:Om1} with
the non-gradient microscopic current function.  It is given in \eqref{3.QOM1} as
\begin{align}  \label{5.om1}
Q_{N,\ell,\b,K}^{\Om_1}(\la,F) = \frac{C}{\b}K+\b^2 Q_{N,\ell}^{(1)}(\la,F)
 +\b Q_\ell^{(2)}(\la,F)+Q_{N,\ell}^{(3)} (\la),
\end{align}
where the first three terms are obtained in 
Theorem \ref{Theorem 3.2} and the last one in Lemma \ref{Lemma 3.4}.
Here, the term $Q_{N,\ell}^{(1)} \equiv Q_{N,\ell}^{(1)}(\la,F)$ is estimated in \eqref{3.Q1}, and together with the Schauder estimate \eqref{eq:3-Schauder}
recalling the exponent $\ga>0$ in \eqref{eq:7-gamma}, we have
\begin{align} \label{eq:Q.2}
|Q_{N,\ell}^{(1)}(\la,F)| \le C N^{-1} K^\ga \ell^{2d+4}(1+\vertiii{F}_{0,\infty}^3).
\end{align}
The term $Q_\ell^{(2)} \equiv Q_\ell^{(2)}(\la,F)$ is for the gradient
replacement and is estimated in \eqref{eq:3.26} together with 
\eqref{eq:3-Schauder} as
\begin{align} \label{5.Q2ell}
|Q_\ell^{(2)}(\la,F)| \le C K^\ga Q_\ell(F)
\end{align}
and by Corollary \ref{Corollary 5.1},
\begin{align} \label{eq:Q.3}
Q_\ell(F)
\le C (1+ r(F)^{2d}) (1+\vertiii{F}_{0,\infty}^2) \ell^{-1}+ C \ell^{-\a_1},
\end{align}
for some $\a_1>0$.  
The term $Q_{N,\ell}^{(3)} \equiv Q_{N,\ell}^{(3)}(\la)$ given in Lemma \ref{Lemma 3.4}
is independent of $F$ and it is estimated from above as
\begin{align} \label{eq:Q.5}
Q_{N,\ell}^{(3)}(\la)\le C K^\ga (\ell^{-1}+N^{-1}) + C \b K^\ga \ell^{-1} + 
\frac{C}\b K + C \b^2 K^\ga N^{-1}\ell^{2d-2}.
\end{align}

The term $Q_{N,\ell,\de_*}^{LD}(\la,F)$ is defined just above 
Proposition \ref{Theorem 2.1}.  We choose $\de=\de_*>0$. It
appears in the large deviation estimate
(see Lemma \ref{Theorem 3.3}) after applying 
the entropy inequality and, by noting that we have
$\|\partial_v G_i\|_\infty \le C K^\ga$,  $\|\partial_\rho G_i\|_\infty \le C K^\ga$
for $i=1,2$ by Schauder estimates \eqref{eq:Schauder},
\eqref{eq:3-Schauder} and the comparison theorem 
(Lemma \ref{lem:max}) used for $\|\la\|_\infty$, it is estimated as
\begin{align} \label{eq:Q.6}
|Q_{N,\ell,\de_*}^{LD}(\la,F)| \le C N^{-1} K^\ga (\ell+ \| F\|_\infty)+
C \ell^{-d} ( \log \ell+K^\ga).
\end{align}

The term $Q_N^{En}(\la,F)$ is the error in the entropy calculation and
is estimated as in \eqref{eq:R(la,F)} (multiplied by $T$).  Indeed, together with
the Schauder estimate \eqref{eq:3-Schauder}
and the comparison theorem (Lemma \ref{lem:max}), we have
\begin{align} \label{eq:R.1}
|Q_N^{En}(\la,F)| \le C N^{-1} K^\ga (1+\vertiii{F}_{2,\infty})^3,
\end{align}
if the condition \eqref{eq:2.F},
in particular,
\begin{align} \label{eq:R.1-A}
N^{-1} K^\ga \vertiii{F}_{0,\infty} \le 1
\end{align}
is satisfied.  

The term $Q_{N,\ell}^{\Om_2}(\la,F)$ estimated in \eqref{3.QOM2} is 
the error for the one-block estimate \eqref{One-block:est} applied for the 
gradient term $\Om_2$ in \eqref{eq:Om2}, adding the discretization error
of \eqref{3.IBP}.  Applying the Schauder estimate \eqref{eq:3-Schauder},
it is bounded as
\begin{align} \label{eq:Q.5-b}
|Q_{N,\ell}^{\Om_2}(\la,F)|
\le C K^\ga & \big(N^{-1}K^{1/2} \ell^{(d+2)/2} + \ell^{-d} (1+r(F)^d)\big)\\
& \times
 (1+\vertiii{F}_{0,\infty}^2 + \vertiii{F}_{1,\infty}) + C K^\ga N^{-1}. \notag
\end{align}

\subsection{Choice of $\b, \ell, F$ and range of $K$}
  \label{sec:6.2}

Related to the error term involving $R(\rho; F)$, we recall
Proposition \ref{prop:5.4} (shown in \cite{FGW}). By this proposition,
one can construct a sequence 
of functions $\Phi_n=\Phi_n(\eta)\in \mathcal{F}_0^d$ on $\mathcal{X}$
satisfying the condition \eqref{eq:Fn}, that is,
\begin{equation*}  
r(\Phi_n)\le n, \quad \sup_{\rho\in [0,1]} \| R(\rho; \Phi_n) \| \le C_2 n^{-\a_2},
\quad \|\Phi_n\|_\infty \le C_2 n^2 \log n,
\end{equation*}
for some $C_2>0$ and $\a_2>0$.  Recall \eqref{eq:3rF} for $r(\Phi_n)$ and
\eqref{eq:3.R} for $R(\rho;\Phi_n)$.

First, we choose $n=n(N)$ and $\ell=\ell(N)$ such that 
$1\le n=[N^{a_1}] \ll \ell = [N^{a_2}] \ll N$ ($[\,\cdot\,]$ denotes the integer part)
with exponents
\begin{equation}  \label{eq:a1a2}
0<a_1<a_2:= 1/(2d+5)<1.  
\end{equation}
Then, we determine $\b(N) := \b(n(N))$ from $\b(n) := n^{\a_2/2}$ with
$\a_2>0$ as in \eqref{eq:Fn} cited above, and $F_N := \Phi_{n(N)}$
(i.e.\ $\Phi_n$ with $n=n(N)$), respectively.
In particular, the radius
$r(\Phi_n)$ of the support of $\Phi_n$ is much smaller than $\ell$
so that the arguments in Section \ref{sec:5} work smoothly for $F=F_N$;
note that $n$ in $H_\ell(\eta)$ in \eqref{eq-3.H} is taken as $n=r(F_N)+1
\le n(N)+1$.  Note that this choice of $F_N$ coincides with that indicated
in Theorem \ref{Corollary 2.1}.

For $K=K(N)\ge 1$, we assume
\begin{align} \label{eq:overlineK}
1 \le K \le \overline{K}_\de(N) := \de \log N
\end{align}
and take $\de=\frak{c}/T>0$ with sufficiently small $\frak{c}>0$
as explained below (and also as in Theorem \ref{Corollary 2.1}).
In particular, by \eqref{eq:Fn}, \eqref{eq:a1a2} and
\eqref{eq:overlineK}, the conditions \eqref{3.26-A} and \eqref{3.12-C} hold. 
We may take $a\in (a_1/a_2,1)$ for \eqref{3.12-C} noting $r(F_N)\le N^{a_1}$
and $\ell_*(N)^a \le (2N^{a_2}+1)^a$.  (Note that $\de>0$ in \eqref{3.26-A} 
is different from that in \eqref{eq:overlineK}.) Moreover, by \eqref{eq:Fn}, 
the condition \eqref{eq:R.1-A} also holds.

Later in Section \ref{sec:3.7}, to obtain the bound on $h_N(t)$, we apply
Gronwall's inequality for the estimate \eqref{2.5} in Proposition \ref{Theorem 2.1}
taking $\de=\de_*>0$ (chosen as in Lemma \ref{Lem:2.3})
so dropping the term involving $g(s)$.  Then, from
$K/\de_*$ in front of $\int_0^t h_N(s)ds$, a diverging factor $e^{Kt/\de_*}$
appears in the concluding estimate for $h_N(t)$; cf.\ \eqref{eq:3.hNt}.
This means that all error terms are multiplied by this factor.  We need to study
them multiplying $e^{Kt/\de_*}$.

Now, we give  bounds for the eight error terms (recall that 
$Q_{N,\ell,\b,K}^{\Om_1}(\la,F_N)$ consists of four terms as in  \eqref{5.om1})
and the initial relative entropy $h_N(0)$, all multiplied by $e^{c_*K}$.
Setting $c_*:= T/\de_*$, from \eqref{eq:overlineK}, the factor $e^{Kt/\de_*}$ 
is bounded by $e^{c_*K} \le N^{c_*\de}$ for $t\in [0,T]$, where $\de>0$ is
in \eqref{eq:overlineK}.
We also bound $K, K^\ga \le CN^{c_*\de}$ for simplicity.  
Noting $c_*\de = T\de/\de_*$, we set $\frak{c}=T\de$ as in 
Theorem \ref{Corollary 2.1}. In particular, $N^{c_*\de}= N^{\frak{c}/\de_*}$.

In the  derivation of the following nine estimates, we take $\frak{c}>0$ small enough. 
Also choose $a_1>0$ in \eqref{eq:a1a2} for $n= [N^{a_1}]$ small enough.

 The constants $\bar C$ and $\k$ in these estimates may change from 
line to line but they can always be taken as $\bar C=\bar C_T>0$ and $\k>0$
($\bar C$ large enough depending on $T$, while $\k$ small enough depending
on $\frak{c}$ but independently of $T$ and $\de$).

\begin{enumerate}
\item The term
$\b^2 Q_{N,\ell}^{(1)}(\la,F) \cdot e^{c_*K}$ in \eqref{5.om1} with $e^{c_*K}$
is bounded, from \eqref{eq:Q.2} and \eqref{eq:Fn}, by
\begin{align*} 
C n^{\a_2} N^{-1} \ell^{2d+4} n^{3d+7} N^{2c_*\de} \le \bar C N^{-\k}.
\end{align*}
Note that $\ell^{2d+4}=N^{(2d+4)/(2d+5)}$ by \eqref{eq:a1a2}.

\item The term  $\b Q_{\ell}^{(2)}(\la,F) \cdot e^{c_*K}$ in \eqref{5.om1}
is bounded, from \eqref{5.Q2ell}, \eqref{eq:Q.3} and \eqref{eq:Fn}, by
\begin{align*} 
C n^{\a_2/2} \big( n^{4d+5} \ell^{-1}  + \ell^{-\a_1}\big) \, N^{2c_*\de} 
\le \bar C N^{-\k}.
\end{align*}

\item The upper bound of $Q_{N,\ell}^{(3)}(\la) \cdot e^{c_*K}$ 
in \eqref{5.om1} is given, from \eqref{eq:Q.5}, by
\begin{align*} 
C \Big\{ \ell^{-1}+N^{-1}  +n^{\a_2/2} \ell^{-1} + n^{-\a_2/2}
+ n^{\a_2} N^{-1}\ell^{2d-2} \Big\} N^{2c_*\de} \le \bar C N^{-\k}.
\end{align*}

\item The term $\frac{C}\b K\cdot  e^{c_*K}$ in \eqref{5.om1}
 with $e^{c_*K}$ is bounded by
\begin{align*} 
C n^{-\a_2/2} N^{2c_*\de}\le \bar C N^{-\k},
\end{align*}
by choosing $\de >0$ small enough.

\item The term $|Q_{N,\ell,\de_*}^{LD}(\la,F)| \cdot e^{c_*K}$ is bounded, 
from \eqref{eq:Q.6} and \eqref{eq:Fn}, by
\begin{align*} 
C \Big\{ N^{-1} (\ell+ n^3) +
\ell^{-d} \log \ell\Big\} N^{2c_*\de}\le \bar C N^{-\k}.
\end{align*}

\item Recalling that \eqref{eq:R.1-A} is satisfied, the term 
$Q_N^{En}(\la,F)\cdot e^{c_*K}$ is bounded,
from \eqref{eq:R.1} and \eqref{eq:Fn}, by
\begin{align*} 
C N^{-1} n^{3d+13} N^{2c_*\de} \le \bar C N^{-\k}.
\end{align*}

\item The term $|Q_{N,\ell}^{\Om_2}(\la,F)|\cdot e^{c_*K}$ is bounded, 
from \eqref{eq:Q.5-b} and \eqref{eq:Fn}, by
\begin{align*}
C \big(N^{-1} \ell^{(d+2)/2} + \ell^{-d} n^d \big)
n^{2d+5} N^{3c_*\de}+ C N^{2c_*\de} N^{-1} \le \bar C N^{-\k}.
\end{align*}

\item The term $(\b+1) K^\ga \sup_{\rho\in [0,1]} \|R(\rho; F)\|\cdot e^{c_*K}$
 is bounded, from \eqref{eq:Fn}, by
\begin{align*} 
C n^{\a_2/2} N^{2c_*\de} n^{-\a_2} \le \bar C N^{-\k},
\end{align*}
by choosing $\de>0$ small, once $a_1>0$ is determined in the above procedure.

\item Finally, by the assumption $h_N(0) \le C_1 N^{-\k_1}$, $C_1, \k_1>0$
of Theorem \ref{Corollary 2.1}, we have
\begin{align*} 
h_N(0) e^{c_*K} \le \bar C N^{-\k}.
\end{align*}
\end{enumerate}

In summary, the nine terms are all bounded by $\bar C N^{-\k}$.

\section{Proof of Theorem \ref{Corollary 2.1}}
\label{sec:3.7}

In this section, we complete the proof of Theorem \ref{Corollary 2.1}.
It follows from Proposition \ref{prop:5.4},
Proposition \ref{Theorem 2.1} and Lemma \ref{Lem:2.3}.  In fact, by (\ref{2.5}) 
in Proposition \ref{Theorem 2.1} taking $\de=\de_*>0$ and $\la(t,v)$ determined 
from \eqref{1.6} as in Lemma \ref{Lem:2.3}, $F=F_N$ as given at the beginning of
Section \ref{sec:6.2} (as noted, this is the same as in the statement of
Theorem \ref{Corollary 2.1})
and $\la_* = \log(1-c)/c>0$ with $c=\min\{\rho_-\wedge \a_-, 
1-\rho_+\vee \a_+\} \, (<1/2)$ with $\rho_\pm$ and $\a_\pm$ as in Lemma 
\ref{lem:max}, we have
\begin{align*}
h_N(t) \le  & h_N(0) + \frac{K}{\de_*} \int_0^t h_N(s)\,ds
+Q_{N,\ell,\b,K}^{\Om_1}(\la,F_N)
+Q_{N,\ell,\de_*}^{LD}(\la,F_N) \\
& + C (\b+1) \, \|\partial\la\|_\infty^2
          \sup_{\rho\in [0,1]} \|R(\rho; F_N)\| 
    +   Q_N^{En}(\la,F_N) + Q_{N,\ell}^{\Om_2}(\la,F_N).
\end{align*}
Therefore, with the help of Gronwall's inequality, we have
\begin{align}  \label{eq:3.hNt}
0\le  h_N(t) \le  e^{Kt/\de_*} \Big( &h_N(0) +
Q_{N,\ell,\b,K}^{\Om_1}(\la,F_N)
 +Q_{N,\ell,\de_*}^{LD}(\la,F_N) \\
 & +C (\b+1) \, \|\partial\la\|_\infty^2
          \sup_{\rho\in [0,1]} \|R(\rho; F_N)\|   \notag  \\  \notag
&    +   Q_N^{En}(\la,F_N) + Q_{N,\ell}^{\Om_2}(\la,F_N)
\Big).
\end{align}
Choose $\b=\b(N)$ and $\ell=\ell(N)$ as given at the beginning of 
Section \ref{sec:6.2}, and assume the condition \eqref{eq:overlineK} for 
$K=K(N)$ taking $\de=\frak{c}/T$ with
$\frak{c}>0$ sufficiently small.  Then, as we show in Section \ref{sec:6.2}, 
each of the nine terms (recall that $Q_{N,\ell,\b,K}^{\Om_1}(\la,F_N)$
consists of four terms as in \eqref{5.om1}) multiplied by $e^{KT/\de_*}$ 
on the right-hand side of \eqref{eq:3.hNt} is bounded by $\bar C N^{-\k}$
for some $\bar C=\bar C_T>0$ and $\k>0$.  Therefore, we conclude 
$h_N(t) \le C N^{-\k}$, $t\in [0,T]$ for  $C=9\bar C$.
This completes the proof of Theorem \ref{Corollary 2.1}.

\section{Proofs of lemmas and propositions in Sections \ref{Section 3}
-- \ref{section:6}}
\label{sec:4-C}

Here we complete the proofs of 
Lemma \ref{Lemma 3.1} (calculation of $\partial_t h_N(t)$), 
Proposition \ref{One-block} (refined one-block estimate),
Lemma \ref{Lemma 3.3} (a key lemma), 
Lemma \ref{Lemma 3.4} (rewriting $O(N)$-looking term to $O(1)$),
Proposition \ref{Proposition 5.1} (CLT variance)
and Lemma \ref{Lem:2.3} ($g(t)\le 0$)
postponed from Sections \ref{Section 3}, \ref{sec:3.3},
\ref{sec:5} and \ref{section:6}.  
These are gathered here, since they have a common feature that
they are shown based on known results with some modification
and refinement, providing careful error estimates.

\subsection{Proof of Lemma \ref{Lemma 3.1} (calculation of $\partial_t h_N(t)$)}
\label{sec:4.1-C}

Recall that $h_N(t)$ is the relative entropy per volume defined by \eqref{2.2} 
with respect to the local equilibrium $\psi_t = \psi_{\la(t,\cdot),F}$
in \eqref{eq:2psit}.  We apply the estimate
\begin{equation}
\partial_t h_N(t) \le N^{-d}
  \int \psi_t^{-1} \left(\mathcal{L}_N^* \psi_t - \partial_t \psi_t
         \right) \cdot f_t\, d\nu^N     \label{3.1}
\end{equation}
which holds for a large class of Markovian models, where $\mathcal{L}_N^*$ 
denotes the dual operator of the generator  $ \mathcal{L}_N$  with respect to 
the measure $ \nu^N$; see \cite{14} Lemma 1, \cite{7} Lemma 3.1.
The proof is divided into four steps.

{\it Step 1.} (Kawasaki part) 
The term in the integrand on the right-hand side of  (\ref{3.1}) derived from
the Kawasaki part was already computed in the proof of Lemma 3.1 of \cite{FUY},
but only with the error estimate $o(1)$.  Let us record the computation (3.2) in
\cite{FUY} here to make
the error term clear.  Noting the symmetry of $L_E$:  $L_E^*=L_E$,
\begin{align}  \label{3.2}
  & \qquad  N^{-d} \psi_t^{-1} N^2 L_E \psi_t    \\
 = &  \frac{N^{2-d}}2  \sum_{x\sim y} c_{x,y} \bigg[ \exp 
   \Big\{(\la(t,x/N) - \la(t,y/N)) (\eta_y-\eta_x)
           \phantom{\sum_{z\in\T_N^d}}   \notag  \\
  &\qquad\qquad \qquad\qquad \qquad      
   + \frac1N \pi_{x,y} \sum_{z\in\T_N^d} \left(\partial\la(t,z/N),\tau_z F
     \right) \Big\} - 1 \bigg]    
        \notag \\  
 = & - \frac{N^{1-d}}2 \sum_{x\sim y} c_{x,y} \Om_{x,y}
     + \frac{N^{-d}}{4} \sum_{x\sim y} c_{x,y} \Om_{x,y}^2  \notag \\
 & - \frac{N^{-d}}{4} \sum_{x\sim y}  c_{x,y} \sum_{i,j}
     \partial_{v_i}\partial_{v_j}\la(t,x/N) (y_i -x_i)(y_j-x_j)(\eta_y-\eta_x) \notag \\
     & + \frac{N^{-d}}2 \sum_{x\sim y} c_{x,y} \sum_{i,j}
     \partial_{v_i}\partial_{v_j}\la(t,x/N)  \pi_{x,y}(\sum_{z\in\T_N^d}(z_j-x_j)
       \tau_z F_i)          + Q_{1,N},   \notag
\end{align}
where the error term $Q_{1,N}\equiv Q_{1,N}(\la,F)$ is defined as the difference
between the expressions in the middle two lines and the last three lines
in \eqref{3.2}.  Recall \eqref{eq:Omxy} for $\Om_{x,y}$.

It is estimated as 
\begin{align}  \label{eq:R(la,F)-1}
|Q_{1,N}| \le &
CN^{-1} \Big(1+ \|\partial\la\|_{3,\infty}\Big)^3
   \Big(1+\vertiii{F}_{2,\infty}\Big)^3,
\end{align}
if the condition \eqref{eq:2.F} 
and accordingly $|\e_N| \le C$ below are satisfied for some $C>0$.
Indeed, to show \eqref{eq:R(la,F)-1}, we expand the second to the third lines
of \eqref{3.2} as
$$
\exp\{\e_N\}-1 = \e_N+ \tfrac12\e_N^2 + O(\e_N^3), \quad |\e_N| \le C,
$$
for $\e_N\equiv \e_{N,x,y}$ given in the braces.  Then, by Taylor's formula, we have
\begin{align*}
\la(t,x/N) - \la(t,y/N)= -&\tfrac1N \big( \partial\la(t,x/N), (y-x)\big)\\
& -\tfrac1{2N^2}(y-x)\cdot \partial^2\la(t,x/N) (y-x)+ 
O\Big( N^{-3} {\|\partial^3\la\|_\infty} \Big),
\end{align*}
and therefore
$$
\e_N= -\tfrac1N \Om_{xy} + \tfrac1N \bar\Om_{xy}
- \tfrac1{2N^2}(y-x)\cdot \partial^2\la(t,x/N)(y-x)
(\eta_y-\eta_x) + O\Big( N^{-3} {\|\partial^3\la\|_\infty}\Big),
$$
where
\begin{align*}
& \bar \Om_{x,y}  \equiv \bar\Om_{x,y}(\eta) 
 =  \pi_{x,y}\sum_{z\in\T_N^d}\Big(\partial\la(t,z/N) 
   - \partial\la(t,x/N), \tau_z F\Big) \\
& = N^{-1} \Big( \partial^2\la(t,x/N)\pi_{x,y}\sum_{z\in\T_N^d}(z-x),\tau_z F\Big) 
+ O\Big(N^{-2} \|\partial^3\la\|_\infty \Big|\pi_{xy}\sum_{z\in\T_N^d}
 (z-x)^{\otimes 2},
\t_z F\Big|\Big).
\end{align*}
Thus, $Q_{1,N}\equiv Q_{1,N}(\la,F)$ is given by
\begin{align*}
Q_{1,N} = & \frac{N^{2-d}}2 \sum_{x\sim y} c_{x,y} 
O\Big( N^{-3} \|\partial^3\la\|_\infty  (1+\vertiii{F}_{2,\infty})\Big)  \\
& + \frac{N^{2-d}}4 \sum_{x\sim y} c_{x,y}  
\big(\e_N^2- N^{-2}\Om_{x,y}^2 \big)
+ O\Big(N^{2-d} \sum_{x\sim y} c_{x,y} \e_N^3 \Big).
\end{align*}
The first term is $O\Big( N^{-1} \|\partial^3\la\|_\infty(1+\vertiii{F}_{2,\infty})\Big)$.
By noting that $|\Om_{xy}|\le \|\partial\la\|_\infty 
(1+\vertiii{F}_{0,\infty})$,
the second is bounded by
\begin{align*}
\Big| & \frac{N^{2-d}}4 \sum_{x\sim y} c_{x,y}  
\big(\e_N+ \tfrac1{N}\Om_{x,y} \big) \big(\e_N- \tfrac1{N}\Om_{x,y} \big) \Big| \\
& \le CN^2 \Big( \tfrac1{N^2} \|\partial^2\la\|_\infty \vertiii{F}_{1,\infty}
+ \tfrac1{N^3} \|\partial^3\la\|_\infty \vertiii{F}_{2,\infty}
+ \tfrac1{N^2} \|\partial^2\la\|_\infty
+ \tfrac1{N^3} \|\partial^3\la\|_\infty \Big) \\
& \qquad 
\times \bigg( \tfrac2{N} \|\partial\la\|_\infty \big(1+\vertiii{F}_{0,\infty}\big)
+\tfrac1{N^2} \|\partial^2\la\|_\infty \vertiii{F}_{1,\infty}  \\
& \qquad\qquad + \tfrac1{N^3} \|\partial^3\la\|_\infty \vertiii{F}_{2,\infty}
+ \tfrac1{N^2} \|\partial^2\la\|_\infty
+ \tfrac1{N^3} \|\partial^3\la\|_\infty \bigg) \\
& \le CN^{-1} \Big( 1+ \|\partial\la\|_{3,\infty}\Big)^2
\Big(1+\vertiii{F}_{2,\infty}\Big)^2.
\end{align*}
Similarly, the third is bounded by
\begin{align*}
CN^2 & \bigg\{ \tfrac1{N^3} \sup_{x,y} |\Om_{xy}|^3 +
\Big( \tfrac1{N^2} \|\partial^2\la\|_\infty \vertiii{F}_{1,\infty}
+ \tfrac1{N^3} \|\partial^3\la\|_\infty \vertiii{F}_{2,\infty}\Big)^3 \\
& \qquad  + \big( \tfrac1{N^2} \|\partial^2\la\|_\infty\big)^3 
+ \big( \tfrac1{N^3} \|\partial^3\la\|_\infty\big)^3 \bigg\} \\
& \le CN^{-1} \Big( 1+ \|\partial\la\|_{3,\infty}\Big)^3
\Big(1+\vertiii{F}_{2,\infty}\Big)^3.
\end{align*}
Summarizing these estimates for three terms of $Q_{1,N}(\la,F)$, we obtain
\eqref{eq:R(la,F)-1} for $Q_{1,N}(\la,F)$.

{\it Step 2.} (Glauber part) 
Next, we compute the contribution from the Glauber part.
First we note that the dual operator $L_G^*$ of $L_G$ with respect to
$\nu^N$ is given by
$$
L_G^* g(\eta) = \sum_{x\in \T_N^d}
 \big\{ c_x(\eta^x)g(\eta^x) - c_x(\eta) g(\eta)\big\}.
$$
Therefore, we have
\begin{align}  \label{eq:3.LG}
  & \qquad  N^{-d} K \psi_t^{-1} L_G^* \psi_t    \\
 = &  N^{-d} K \sum_{x\in \T_N^d}  \Bigg[ c_{x}(\eta^x) \exp \Big\{ \pi_x
 \Big( \sum_{z\in \T_N^d} \la(t,z/N) \eta_z + \frac1N \sum_{z\in\T_N^d} \left(\partial\la(t,z/N),\tau_z F(\eta)\right) \Big)\Big\} - c_x(\eta) \Bigg]    \notag \\
   = &  N^{-d} K \sum_{x\in \T_N^d}  \Bigg[ c_{x}(\eta^x) \exp \Big\{ 
 \la(t,x/N) (1-2\eta_x) + \frac1N \sum_{z\in\T_N^d} \left(\partial\la(t,z/N),\pi_x\tau_z F(\eta)\right) \Big\} - c_x(\eta) \Bigg]    \notag  \\
 = & N^{-d} K \sum_{x\in \T_N^d}  \left[ c_{x}(\eta^x) \exp \Big\{ 
 \la(t,x/N) (1-2\eta_x)  \Big\} - c_x(\eta) \right] 
 + Q_{2,N},  \notag
\end{align}
where the error term $Q_{2,N} \equiv Q_{2,N}(\la,F)$ is defined as the
difference between two expressions in the third and the fourth lines.
However, the leading term can be rewritten as the fifth term of $\Om_2$ noting
\eqref{eq:c+-} and \eqref{2.3}.  Indeed, writing $\la=\la(t,x/N), \rho=\rho(t,x/N)$
and $c_x^\pm=c_x^\pm(\eta)$ for simplicity, the term inside the brackets in the last sum
is rewritten as
\begin{align}  \label{eq:8.4-C}
\big(c_x^+e^{-\la}1_{\{\eta_x=1\}} +  c_x^- e^{\la}1_{\{\eta_x=0\}}\big)
- \big(c_x^+1_{\{\eta_x=0\}} +  c_x^- 1_{\{\eta_x=1\}}\big).
\end{align}
Then, using $e^{-\la} = (1-\rho)/\rho$, $e^\la = \rho/(1-\rho)$,
$ 1_{\{\eta_x=1\}} =(\eta_x-\rho)+\rho$ and $ 1_{\{\eta_x=0\}} =(1-\rho)-
(\eta_x-\rho)$, we easily see that \eqref{eq:8.4-C} is further rewritten as
$$
\Big( \frac{c_x^+}{\rho} - \frac{c_x^-}{1-\rho}\Big) (\eta_x-\rho),
$$
and gives the fifth term of $\Om_2$.  Recall \eqref{eq:Om2} for $\Om_2$.
(This coincides with the right-hand side in Lemma 2.3 of \cite{FvMST}.)

The error term $Q_{2,N}$ in \eqref{eq:3.LG} is estimated as 
\begin{align}  \label{eq:R(la,F)-2}
|Q_{2,N}| \le &
CN^{-1} K e^{\|\la\|_\infty} \|\partial\la\|_\infty \vertiii{F}_{0,\infty},
\end{align}
if the second bound in the condition \eqref{eq:2.F} is satisfied.
Indeed, 
$$
\exp \{ \e_N\} = 1+O(\e_N), \quad |\e_N|\le C,
$$
for $\e_N\equiv \e_{N,x} = \frac1N \sum_{z\in\T_N^d} 
\left(\partial\la(t,z/N),\pi_x\tau_z F(\eta)\right)$.
Thus,
\begin{align*}
Q_{2,N}=  N^{-d} K \sum_{x\in \T_N^d}  c_{x}(\eta^x) &  \exp \Big\{ 
 \la(t,x/N) (1-2\eta_x)  \Big\} \\
& \times
 O\bigg(\frac1N \sum_{z\in\T_N^d} \big(\partial\la(t,z/N),\pi_x\tau_z F(\eta)\big)
 \bigg)
\end{align*}
and therefore we obtain \eqref{eq:R(la,F)-2}.

{\it Step 3.} (Second term in  \eqref{3.1})
Recalling \eqref{eq:2psit} for $\psi_t$, the second term in the integrand
on the right-hand side of \eqref{3.1} is rewritten as
\begin{align}  \label{3.3}
N^{-d} \psi_t^{-1}  \partial_t \psi_t
 =  &- N^{-d} Z_t^{-1} \partial_t Z_t  \\
&  + N^{-d} \partial_t
  \left\{ \sum_{x\in\T_N^d} \la(t,x/N) \eta_x 
  + \frac1{N} \sum_{x\in\T_N^d} \left(\partial\la(t,x/N),
     \tau_x F \right) \right\}.    \notag
\end{align}
From \eqref{3.2}, \eqref{eq:3.LG} and \eqref{3.3}, it follows that
\begin{equation}
N^{-d} \psi_t^{-1} \left(\mathcal{L}_N^* \psi_t - \partial_t \psi_t \right)
= \Om_1(\eta) + \Om_2(\eta) + a(t) +\sum_{i=1}^3 Q_{i,N},  \label{3.4}
\end{equation}
where 
$$
Q_{3,N} \equiv Q_{3,N}(\la,F) = -N^{-d-1}\sum_{x\in \T_N^d} (\partial\dot{\la}(t,x/N), \t_x F),
$$
and
\begin{equation}
a(t) := N^{-d} Z_t^{-1} \partial_t Z_t
  = E^{\psi_t}\left[ N^{-d} \sum_{x\in\T_N^d} \dot{\la}(t,x/N)\eta_x \right]
    + O\Big(N^{-1} \|\partial\dot{\la}\|_\infty \|F\|_\infty\Big).     \label{3.5}
\end{equation}
Recall \eqref{eq:Om1} for $\Om_1$.
The last equality for $a(t)$  is seen from \eqref{3.3} by noting that
$$
E^{\psi_t}\left[ N^{-d} \psi_t^{-1} \partial_t\psi_t \right] = 0
$$
and
\begin{equation}  \label{eq:R3}
| Q_{3,N}| \le N^{-1} \|\partial\dot{\la}\|_\infty \|F\|_\infty.
\end{equation}
Thus,
$$
a(t) = N^{-d} \sum_{x\in\T_N^d} \dot{\la}(t,x/N)\rho(t,x/N)+Q_{4,N} +
O\Big(N^{-1} \|\partial\dot{\la}\|_\infty \|F\|_\infty\Big),
$$
where the error $Q_{4,N} \equiv Q_{4,N}(\la,F)$ is given by
$$
Q_{4,N} = N^{-d} \sum_{x\in\T_N^d} \dot{\la}(t,x/N)
\Big( E^{\psi_t}\left[ \eta_x \right]-\rho(t,x/N)\Big).
$$
We only need to estimate $|E^{\psi_t}\left[ \eta_x \right] 
-\rho(t,x/N)|$.

{\it Step 4.} (Proof of the error estimate  \eqref{eq:R(la,F)})
To give an estimate for $Q_{4,N} \equiv Q_{4,N}(\la,F)$, we note the 
$r$-Markov property of $\psi_t$ (or $P^{\psi_t}$).
Indeed, taking $\La = \{x\}$, $r=r(F)$  and $\bar\La = \La_{r(F), x}$
in Lemma \ref{lem:3.2-A} in Appendix A, we see
\begin{equation}  \label{m-etax}
E^{\psi_t}[\eta_x] = E^{\psi_t}\big[E^{P_{\{ x\}}^{\om,F}} [\eta_x] \big]
\end{equation}
and
\begin{align}\label{mm-etax}
E^{P_{\{ x\}}^{\om,F}} [\eta_x] 
& := \widetilde Z_{\om,F}^{-1} \sum_{\eta_x \in \{0,1\}} \eta_x
e^{\la(t,x/N) \eta_x + \frac1N \sum_{y \in \T_N^d:\, \text{supp}\, \t_y F\subset \bar\La}
(\partial\la(t,y/N), \t_y F (\eta_x\cdot\om))} \\
& \; =: \widetilde Z_{\om,F}^{-1} A_{\om,F},   \notag
\end{align}
for $\om \in \{0,1\}^{\bar\La \setminus \{ x\}}$, where
\begin{equation}\label{tildeZ}
\widetilde Z_{\om,F}:= \sum_{\eta_x\in \{0,1\}} 
e^{\la(t,x/N) \eta_x + \frac1N \sum_{y \in \T_N^d:\, \text{supp}\, \t_y F\subset \bar\La}
(\partial\la(t,y/N), \t_y F (\eta_x\cdot\om))}.
\end{equation}
Note that $\sum_{y\in \bar\La \setminus \{ x\}} \la(y/N) \om_y$
in $H_{\bar \La}(\eta\cdot\om)$ cancels with that in $Z_\om$
and gives the formula \eqref{mm-etax} with $\widetilde Z_{\om,F}$ determined
by \eqref{tildeZ}.

Noting that $E^{P_{\{ x\}}^{\om,0}} [\eta_x] = \rho(t,x/N)$ taking $F\equiv 0$,
we have
$$
E^{P_{\{ x\}}^{\om,F}} [\eta_x] - \rho(t,x/N)
=  \widetilde Z_{\om,F}^{-1} A_{\om,F}  -  \widetilde Z_{\om,0}^{-1} A_{\om,0}.
$$
A simple estimate shows
\begin{align*}
& \widetilde Z_0 e^{-\frac{c}N r(F)^d \|\partial\la(t,\cdot)\|_\infty \| F\|_\infty}
\le \widetilde Z_{\om,F} \le 
\widetilde Z_0 e^{\frac{c}N r(F)^d \|\partial\la(t,\cdot)\|_\infty \| F\|_\infty}, \\
& A_0 e^{-\frac{c}N r(F)^d \|\partial\la(t,\cdot)\|_\infty \| F\|_\infty}
\le A_{\om,F} \le 
A_0 e^{\frac{c}N r(F)^d \|\partial\la(t,\cdot)\|_\infty \| F\|_\infty},
\end{align*}
for $A_0 =A_{\om,0}$, $\widetilde Z_0 = \widetilde Z_{\om,0}$ and
some $c>0$.
Therefore, taking the expectation in $\om$ under $P^{\psi_t}$, we obtain
by \eqref{m-etax}
\begin{align*}
|E^{\psi_t}\left[ \eta_x \right] -\rho(t,x/N)|
& = \bigg| E^{\psi_t}\Big[ \frac{A_0}{\widetilde Z_{\om,F}}
\Big( \frac{A_{\om,F}}{A_0} - \frac{\widetilde Z_{\om,F}}{\widetilde Z_0}\Big) \Big] \Big| \\
& \le C_1 \Big| e^{\frac{c}N r(F)^d \|\partial\la(t,\cdot)\|_\infty \| F\|_\infty}
-1 \bigg| \\
& \le C_2 N^{-1} r(F)^d \|\partial\la\|_\infty \| F\|_\infty,
\end{align*}
if $F$ satisfies the condition \eqref{eq:2.F}: $N^{-1} r(F)^d 
\|\partial\la\|_\infty \| F\|_\infty \le 1$; note that $0\le A_0/\widetilde Z_{\om,F}
= A_0/\widetilde Z_0 \cdot \widetilde Z_0/\widetilde Z_{\om,F}\le e^c$
under this condition.  This leads to the bound on 
$Q_{4,N} \equiv Q_{4,N}(\la,F)$:
\begin{align}  \label{eq:3.R4}
|Q_{4,N}| \le C N^{-1}  \|\dot{\la}\|_\infty \|\partial\la\|_\infty \vertiii{F}_{0,\infty}.
\end{align}
By \eqref{eq:R(la,F)-1}, \eqref{eq:R(la,F)-2}, \eqref{eq:R3} and \eqref{eq:3.R4},
we obtain \eqref{eq:R(la,F)}, and this concludes the proof of Lemma \ref{Lemma 3.1}.

\subsection{Proof of Proposition \ref{One-block} (refined one-block estimate)}
\label{sec:4.2-C}

We follow the argument in Section 3 of \cite{FvMST}, where the same Glauber-Kawasaki dynamics $\eta^N(t)$ was considered.  Note that although the gradient condition was assumed in \cite{FvMST}, it was not used in Section 3.  
It was used only for rewriting the term $I_1$ in p.18 of \cite{FvMST}
into a form of $O(1)$ and $I_1$ corresponds to our $\Om_1$. (For this
correspondence, note \eqref{eq:2.5-C} for $\frac1{\chi(u_x)}$ appearing in $I_1$.)

We first decompose as 
\begin{align} \label{BG:pf:0}
  E \bigg[ \bigg |\int_0^t N^{-d} \sum_{x \in \T_N^d} a_{s,x} \hat{\Om}_x 
  ds \bigg |\bigg]
  \leq & E\bigg[ \bigg |\int_0^t N^{-d} 
  \sum_{x \in \T_N^d} a_{s,x} m_x ds \bigg |\bigg]  \\
 & + E\bigg[ \bigg |\int_0^t N^{-d} \sum_{x \in \T_N^d} a_{s,x} E^{\nu_{1/2}}
 [\hat{\Om}_x \mid \bar\eta_x^\ell] ds \bigg |\bigg],  \notag
\end{align}
where $\nu_{1/2}$ is the Bernoulli measure with 
$\rho=1/2$ on $\mathcal{X}=\{0,1\}^{\Z^d}$ (recall Section \ref{Section 1.1})
and
\begin{align*}
  m_x \equiv m_x(\eta)
  := \hat{\Om}_x - E^{\nu_{1/2}}[\hat{\Om}_x \mid \bar\eta_x^\ell].
\end{align*}
Precisely saying, the three expectations in \eqref{BG:pf:0} are taken for the
process $\eta^N(\cdot)$, so that $\hat{\Om}_x$ in the left-hand side
and $m_x$ in the right-hand side are understood as $\hat{\Om}_x(\eta^N(s))$ and
$m_x(\eta^N(s))$, respectively, and also we replace $\eta$ to
$\eta^N(s)$ in the conditional expectation, which is a function of
$\eta$, in the last expectation.

Lemma 3.1 of \cite{FvMST} applies for the first term in \eqref{BG:pf:0}.
Recall we assume \eqref{BG:assn:atx}  for $a_{t,x}$.

\begin{lem} {\rm (Lemma 3.1 of \cite{FvMST})} \label{l:L10:6}
Let $\gamma = \gamma(N) > 0$. If $\gamma M \ell_*^{d+2} \le \de N^2$
for some $\de>0$ small enough, then for every $t\in [0,T]$ and $N$ large enough
\begin{equation}  \label{eq:8.15-C}
      E\bigg[ \bigg |\int_0^t N^{-d} \sum_{x \in \T_N^d} a_{s,x} m_x ds \bigg |\bigg]
      \leq C \Big( \frac K\gamma 
      + \frac{ \gamma  \ell_*^{d+2} }{ N^2 } M^{2} \| \Om\|_\infty^2\Big),
\end{equation}
for some $C=C_T>0$.  

In particular, choosing 
$\ga= NK^{1/2}\ell_*^{-(d+2)/2}/ M(\| \Om\|_\infty+1)$, the right-hand side is
bounded by
$$
2CN^{-1} K^{1/2}\ell_*^{(d+2)/2} M(\| \Om\|_\infty+1).
$$
For this choice of $\ga$, the above condition for $\ga$ is satisfied if 
\eqref{3.26-A} stated in Proposition \ref{One-block} holds.
\end{lem}

\begin{proof}
Lemma 3.1 of \cite{FvMST} holds in our setting, but we need a small adjustment
in the proof.  We point out the necessary changes.

Instead of the assumption (1.19) of \cite{FvMST}
for $a_{t,x}$ (i.e.\ $|a_{t,x}| \le CK^\th/[t(T-t)]^\Theta$ for some $\th\ge 0,
\Theta\in [0,1)$), we have \eqref{BG:assn:atx}:
$$
|a_{t,x}| \le M \quad \text{for all } t\in [0,T], \; x \in \T_N^d.
$$
In other words, we may take ``$\Theta=0$ and $M$ for $CK^\th$ in
(1.19) of \cite{FvMST}.  Furthermore, letting $\e= \e(N)\downarrow 0$,
the first term in the concluding estimate in  Lemma 3.1 of \cite{FvMST}
becomes negligible, since $\e^{1-\Theta}K^\th=\e M/C\to 0$.  

For the third term in the concluding estimate in  Lemma 3.1
($\e^{2\Theta}=\e^0=1$), at the last part of its proof, we had
$$
  \langle (-L_{\ell, x, j})^{-1} (a_{t,x} m_x), a_{t,x} m_x \rangle_{\nu_{\ell, x, j}}
  \leq \frac1{\operatorname{gap}(j,\ell)} \sup_{t,x,\eta} | a_{t,x} m_x |^2, 
$$
where $\operatorname{gap}(j,\ell) \geq C / \ell_*^2$ as indicated above this
estimate in \cite{FvMST}.  Thus, in our situation, estimating as
$$
\sup_{t,x,\eta} |a_{t,x}m_x|^2 \le (2M)^2 \|\Om\|_\infty^2,
$$
we obtain the estimate \eqref{eq:8.15-C}.  Summarizing again, \eqref{eq:8.15-C}
is obtained from the concluding estimate in  Lemma 3.1 of  \cite{FvMST},
by neglecting the first term (by letting $\e\downarrow 0$) and replacing 
$CK^{2\th}$ by $(2M)^2 \|\Om\|_\infty^2$ and $\e^{2\Theta}$ by $1$.

Note that the condition `$\gamma M \ell_*^{d+2} \le \de N^2$
for some $\de>0$ small enough' was used for the
denominator in (3.10) of \cite{FvMST} (with $M$ for $CK^\th$ and 
$\e^{-\Theta}=1$), derived by Rayleigh estimate,
to stay uniformly positive, say $\ge 1/2$.  This determines how small
$\de>0$ needs to be.  This is the condition (3.2) in \cite{FvMST} with
$\e^{-\Theta}= \e^0=1$ and $M/C$ instead of $K^\th$.

The second assertion with the special choice of $\ga$ is straightforward.
\end{proof}

For the second term in \eqref{BG:pf:0}, we apply Theorem 4.1 of \cite{CM} 
(taking $\La=\La(\ell)$) which provides the equivalence of ensembles
for general Gibbs measures with precise convergence rate:
For any $\e\in (0,1/4)$, there exists $C=C_\e > 0$ such that for all $N$ 
sufficiently large
\begin{equation} \label{eq:3EE}
      \sup_{\rho\in [0,1]} \max_{x \in \T_N^d} \sup_{\eta \in \mathcal X_N} \left| 
      E^{\nu_{\rho}}[\t_x \Om \mid \bar\eta_x^\ell]
      - \lan \Om \ran (\bar\eta_x^\ell)  \right| 
      \leq \frac C{\ell^{d}} |\De| \, \| \Om\|_\infty,
\end{equation}
where the local function $\Om$ has a support in $\De$ such that 
$|\De|\le \ell_*^{d(1-4\e)}$.

Combining the two estimates given in Lemma \ref{l:L10:6} and 
\eqref{eq:3EE} with $\rho=1/2$ and $|\De| \le r(\Om)^d \le \ell_*^{d(1-4\e)}$
and writing the exponent as  $a= 1-4\e\in (0,1)$, 
the proof of Proposition \ref{One-block} is concluded.

\subsection{Proof of Lemma \ref{Lemma 3.3} (a key lemma)}
\label{sec:4.3-C}

Recall that $J(t,v) \in C^\infty([0,T]\times\T^d, \R^n)$, 
$G(\eta) \in \mathcal{F}_0^n$ and  $M(\rho) \in C([0,1],\R^n \otimes \R^n)$
are given.  For a box $\La(\ell)$, we decompose $\eta$ as 
$\xi = \eta|_{\La(\ell)}$ and $\zeta = \eta|_{\La(\ell)^c}$.
We write $G$ as $G(\eta)= G_{\zeta}(\xi)$ regarding it as a function of
$\xi$.  We assume $\lan G_\zeta \ran_{\La(\ell),m} = 0$ for every  
$\zeta\in \mathcal{X}_{\La(\ell)^c}$ and $m: 0 \le m \le \ell_*^d$.  
The proof is divided into
three steps.  It is given by combining that of Lemma 6.1 of \cite{FUY}
and the calculation developed in that of Lemma 3.1
of \cite{FvMST} for Glauber-Kawasaki dynamics.

{\it Step 1.} (Application of entropy inequality and Feynman-Kac formula) 
Setting 
$$ 
W_{N,t}(\eta) = N \sum_{x \in \T_N^d} J(t,x/N) \cdot \tau_xG - 
   \b \sum_{x \in \T_N^d} J(t,x/N) \cdot M(\bar{\eta}_x^\ell)J(t,x/N),
$$ 
by the entropy inequality 
noting that $H(P_{f_0}|P_{\nu^N}) \le C N^d$ ($P_{f_0}$ denotes the distribution
on the path space $D([0,T],\mathcal{X}_N)$ of $\eta^N(\cdot)$ with the
initial distribution $f_0\, d\nu^N$) at the process level, we have 
\begin{align*}
 \int_0^t E^{f_s} [ N^{-d}  W_{N,s}(\eta) ]ds 
 & \le \frac1{\b} N^{-d} \log {E_{\nu^N} \left[e^{\int_0^t \b W_{N,s}(\eta^N(s))ds} 
  \right]}  + \frac{C}{\b},
\end{align*}
for every $\b>0$; recall $\nu^N=\nu_{1/2}^N$ and $E_{\nu^N}[\,\cdot\,]$
means the expectation under the process $\eta^N(\cdot)$ with the initial
distribution $\nu^N$.
Then, by Feynman-Kac formula, similar to (3.6) of \cite{FvMST}
(with $\e=0, \e^{1-\Theta}=0$ in the sense that we explained in the proof of 
Lemma \ref{l:L10:6} in Section \ref{sec:4.2-C})  shown for the
Glauber-Kawasaki dynamics, the right-hand side is bounded by
\begin{align}  \label{eq:9.18-E}
\frac1{\b} N^{-d}\int_0^t  \Om_{N,\b}(s)ds + \frac{C}{\b},
\end{align}
where 
\begin{equation}  \label{eq:9.19-Om}
\Om_{N,\b}(t) 
 = \sup_{\psi: \int\psi^2d\nu^N=1}
     \Big\{E^{\nu^N} [\b W_{N,t} \psi^2] - E^{\nu^N} [\psi (-\mathcal{L}_N \psi)]
     \Big\}.
\end{equation}
Note that
\begin{equation*}
 E^{\nu^N} [\psi (-\mathcal{L}_N \psi)]
 = N^2 E^{\nu^N} [\psi (-L_E \psi)] + K E^{\nu^N} [\psi (-L_G \psi)].
\end{equation*}

{\it Step 2.} (Contribution of Glauber part)
For the Glauber part, it was shown in the proof of Lemma 3.1 of \cite{FvMST} that
\begin{equation*}
 E^{\nu^N} [\psi (-L_G \psi)] \ge -C N^d.
\end{equation*}
This bound gives $\frac{C}\b K$ in the estimate stated in Lemma \ref{Lemma 3.3}.

{\it Step 3.} (Precise error estimate for Kawasaki part)
Once we detach $K E^{\nu^N} [\psi (-L_G \psi)]$ from the expression
on the right-hand side of \eqref{eq:9.19-Om}, then $\Om_{N,\b}(t)$ becomes
the same quantity as (6.3) of \cite{FUY} defined only from the Kawasaki part.
Then, the same bound as \cite{FUY} holds 
for $\Om_{N,\b}(t)$ detached the term from Glauber part 
in the present setting until the very end of the proof of Lemma 6.1 of \cite{FUY}.
The only difference is that,
in (6.7) of \cite{FUY}, the limit was taken as $N\to\infty$, but here we have to
derive a quantitative error estimate. 

Indeed, the first term in \eqref{eq:9.18-E} dropped the contribution of the
Glauber part is bounded by $t$ times the expression on line 4, p.28 of
\cite{FUY}, i.e.\ by adjusting the notation to our setting, it is bounded by
\begin{equation}  \label{eq:9.19-Q}
 t \sup_{|\th| \le \|J\|_\infty } \sup_{\zeta\in \mathcal{X}_{\La(\ell)^c}}
 \sup_{m: 0\le m \le \ell_*^d}
  \Big[ \frac{N^2}{\b |(\La(\ell))^*|} \Om_{N,\b,\th,G}^{m,\ell,\zeta}- \b  
      \, \th \cdot M(m/\ell_*^d)\th \Big], 
\end{equation}
where $\Om_{N,\b,\th,G}^{m,\ell,\zeta}$ is the largest eigenvalue of the symmetric
operator $L_{\La(\ell),\zeta}+V$ with $V=N^{-1}\b |(\La(\ell))^*|  \, 
\th\cdot G_\zeta$ in $L^2(\mathcal{X}_{\La(\ell),m}, \nu_{\La(\ell),m})$. 

By Theorem 6.1 of \cite{FUY} (Rayleigh-Schr\"odinger bound for 
general Markov generator), $\Om_{N,\b,\th,G}^{m,\ell,\zeta}$ is bounded as
\begin{equation}  \label{eq:9.21-P}
\Om_{N,\b,\th,G}^{m,\ell,\zeta} \le
\lan V,  (-L_{\La(\ell),\zeta})^{-1} V\ran + Q_{N,\ell,\b}(V) 
\end{equation}
and the upper bound of the error is given by 
$$
Q_{N,\ell,\b}(V) := 
4 \Vert V \Vert_{\infty}^3
   \lbr \sup_{f:\lan f\ran_{\La(\ell),m} =0}
   \frac {\lan f^2\ran_{\La(\ell),m}}{\lan -f L_{\La(\ell),\zeta} f\ran_{\La(\ell),m}}\rbr^2.
$$
Since we have a spectral gap estimate for the Kawasaki generator:
\begin{align}  \label{eq:3-SG}
\lan -f L_{\La(\ell),\zeta} f\ran_{\La(\ell),m} \ge C_0 \ell^{-2}
\lan f^2\ran_{\La(\ell),m},
\end{align}
with a constant $C_0>0$ independent of $\zeta, m, \ell$ (see \cite{6} or
Appendix A of \cite{FUY}), recalling the definition of $V$,
we have a bound for the error term:
\begin{align*}
Q_{N,\ell,\b}(V)
\le 4N^{-3}\b^3 |\La(\ell)^*|^3 \|\th\cdot G\|_\infty^3 
    \, \big(C_0^{-1} \ell^2\big)^2.
\end{align*}
Therefore, in \eqref{eq:9.19-Q},     
\begin{align*}
t \frac{N^2}{\b |(\La(\ell))^*|} Q_{N,\ell,\b}(V)    
\le C_T N^{-1}\b^2 \ell^{(2d+4)} \| G\|_\infty^3 \| J\|_\infty^3,
\end{align*}
for $\th: |\th| \le \|J\|_\infty$.  This is the estimate \eqref{eq:3QNell}
for $Q_{N,\ell}^{(1)}(J,G)$ in Lemma \ref{Lemma 3.3} multiplied by $\b^2$.  
The rest in \eqref{eq:9.19-Q} is, by \eqref{eq:9.21-P},
\begin{align*}
&t  \sup_{|\th| \le \|J\|_\infty } \sup_{\zeta\in \mathcal{X}_{\La(\ell)^c}}
 \sup_{m: 0\le m \le \ell_*^d}
  \Big[ \frac{N^2}{\b |(\La(\ell))^*|} \lan V,  (-L_{\La(\ell),\zeta})^{-1} V\ran- \b  
      \, \th \cdot M(m/\ell_*^d)\th \Big].
\end{align*}
This coincides with the variational term in Lemma \ref{Lemma 3.3} 
recalling the definition of $V$ and noting that 
$|(\La(\ell))^*| = d |\La(\ell)| = d \ell_*^d$.  Thus, we complete the proof 
of Lemma \ref{Lemma 3.3}.

\subsection{Proof of Lemma \ref{Lemma 3.4} (rewriting $O(N)$-looking term 
to $O(1)$)}
\label{sec:4.4-C}

 We essentially follow the proof of Lemma 3.4 of \cite{FUY}, 
but make it finer.  Denote the first and second terms in the expectation
$E^{f_t}[\cdots]$ 
by  $I^N_1 = I^N_1(t,\eta)$  and $ I^N_2 = I^N_2(t,\eta)$, respectively.
Then, similar to the proof of Lemma 3.4 of \cite{FUY}  (but, with the $-$ sign), 
we have the decomposition of  $I_1^N$:
\begin{equation}  \label{eq:3.IJ}
I_1^N = J^N_+ - J^N_-,
\end{equation}
where
$$
J^N_\pm = N^{1-d} \ell_*^{-d} \sum_{x\in\T_N^d} \sum_{i,j=1}^d D_{ij}(\bar{\eta}_{x}^\ell)
   \partial_{v_j} \la(t,x/N) \t_{x\pm \ell e_i} \widetilde{A}_{\ell,i}(\eta),
$$
and
$$
\widetilde{A} _{\ell,i} = \sum_{x \in \Z^d: x_i =0, |x_j| \le \ell
  \text{ for }  j\not= i} \eta_x.
$$

However, $J_-^N$  can be rewritten as
\begin{align*}
J_-^N & =  N^{1-d} \ell_*^{-d} \sum_{x\in\T_N^d} \sum_{i,j=1}^d
   D_{ij}(\bar{\eta}_{x-e_i}^\ell)
   \partial_{v_j} \la(t,(x-e_i)/N) \t_{x- (\ell+1)e_i} \widetilde{A}_{\ell,i} 
       \\  
 & =  N^{1-d} \ell_*^{-d} \sum_{x\in\T_N^d} \sum_{i,j=1}^d D_{ij}(\bar{\eta}_{x-e_i}^\ell)
   \partial_{v_j} \la(t,x/N) \t_{x- (\ell+1)e_i} \widetilde{A}_{\ell,i} 
    + Q_{1,\ell},
\end{align*}
with an error $Q_{1,\ell} \equiv Q_{1,\ell}(\la)$ bounded as
$$
|Q_{1,\ell}| \le C_1   \ell^{-1} \|\partial^2\la\|_\infty,
$$
by noting  $\partial_{v_j} \la(t,(x-e_i)/N) = \partial_{v_j} \la(t,x/N)
+ O(\|\partial^2 \la \|_\infty/N)$ 
and $\t_{x- (\ell+1)e_i} \widetilde{A}_{\ell,i} = O(\ell^{d-1})$.  
Therefore, by \eqref{eq:3.IJ}, we obtain
\begin{align*}
I_1^N = & N^{1-d} \ell_*^{-d} \sum_{x\in\T_N^d}  \sum_{i,j=1}^d \left\{
D_{ij}(\bar{\eta}_{x}^\ell) \t_{x+\ell e_i} \widetilde{A}_{\ell,i} \right.   \\
  &  \qquad\qquad \left. - D_{ij}(\bar{\eta}_{x-e_i}^\ell) \t_{x- (\ell+1)e_i} 
    \widetilde{A}_{\ell,i} \right\} \partial_{v_j} \la(t,x/N)  - Q_{1,\ell}.
\end{align*}

On the other hand, since $\partial_{v_i}\partial_{v_j} \la(t,x/N) = N \{
\partial_{v_j} \la(t,(x+e_i)/N) - \partial_{v_j} \la(t,x/N)\} + O(\|\partial^3 \la \|_\infty/N)$,
$$
I_2^N =  - N^{1-d} \sum_{x\in\T_N^d} \sum_{i,j=1}^d \left\{
P_{ij}(\bar{\eta}_{x}^\ell)  - P_{ij}(\bar{\eta}_{x-e_i}^\ell) \right\}
   \partial_{v_j} \la(t,x/N)  + Q_{2,N},    
$$
with an error $Q_{2,N} \equiv Q_{2,N}(\la)$ bounded as
$$
|Q_{2,N}| \le C_2  N^{-1} \|\partial^3\la\|_\infty.
$$
Note that $P_{ij}(\rho)$ is bounded.

From these two equalities for  $I_1^N$  and  $I_2^N$, $Q_{N,\ell}^{(3)}(\la)$ 
in the statement of the lemma is rewritten as
\begin{equation}
\int_0^T E^{f_t} \left[   N^{1-d} \sum_{x\in\T_N^d} \sum_{i,j=1}^d 
\partial_{v_j} \la(t,x/N) Q_{ij;\ell}(x,\eta) -Q_{1,\ell}+Q_{2,N}\right]
   \, dt,   \label{3.6-A}
\end{equation}
where
\begin{align*}
Q_{ij;\ell}(x,\eta) 
   = & - P_{ij}(\bar{\eta}_{x}^\ell) + P_{ij}(\bar{\eta}_{x-e_i}^\ell) \\
& + \ell_*^{-d}  \left\{
D_{ij}(\bar{\eta}_{x}^\ell) \t_{x+\ell e_i} \widetilde{A}_{\ell,i} -
D_{ij}(\bar{\eta}_{x-e_i}^\ell) \t_{x- (\ell+1)e_i} \widetilde{A}_{\ell,i} \right\}.
\end{align*}

Now, we apply Lemma \ref{Lemma 3.3} (the key lemma in the setting of 
Glauber-Kawasaki dynamics) to estimate (\ref{3.6-A})
 (except $Q_{1,\ell}$ and $Q_{2,N}$) for each fixed $i$ and $j$ from the above;
namely, take  $n = 1$, $J(t,v) = \partial_{v_j} \la(t,v), G(\eta) \equiv G_{ij}(\eta)
= Q_{ij;\ell}(0,\eta)$, $M(\rho)=0$
and apply Lemma \ref{Lemma 3.3} with  $\ell+1$  instead of  $\ell$ by
noting that  $\lan G \ran_{\La(\ell+1), m} = 0$  for any
$0 \le  m \le (\ell+1)_*^d \equiv
|\La(\ell+1)|$.  Then, noting $\| J\|_\infty\le \|\partial \la\|_\infty$,
we see that (\ref{3.6-A})  (except for $Q_{1,\ell}$ and $Q_{2,N}$) is bounded by
the sum in $i,j$ (note $G=G_{ij}$) of
\begin{align}  \label{eq:3.7-0}
& \b T \|\partial \la\|_\infty^2 d (\ell+1)_*^d  
\sup_{m,\zeta\in \mathcal{X}_{\La(\ell+1)^c} }
   \lan G, (-L_{\La(\ell+1),\zeta})^{-1} G 
\ran_{\La(\ell+1), m}  \\
& \qquad \qquad
+ \frac{C}{\b}K + \b^2 Q_{N,\ell+1}^{(1)}(\partial\la,G).
\notag
\end{align}
Therefore, recalling the spectral gap of  $L_{\La(\ell),\zeta}$ given in 
\eqref{eq:3-SG}, that is, $(- L_{\La(\ell+1),\zeta})^{-1}  \le C_0^{-1}(\ell+1)^2$, 
the first term of (\ref{eq:3.7-0}) is bounded by
\begin{equation}  \label{3.7-A}
C\b \|\partial \la\|_\infty^2 \ell^{d+2}  \lan G^2\ran_{\La(\ell+1), m}.                                                 
\end{equation}
However, by applying  Taylor's formula to  $P_{ij}$, we have
$$
P_{ij}(\bar{\eta}_{0}^\ell) - P_{ij}(\bar{\eta}_{-e_i}^\ell)
 = D_{ij}(\bar{\eta}_{0}^\ell) \fa_{i,\ell}(\eta) 
+ \frac12 D_{ij}'(\rho^*) \fa_{i,\ell}(\eta)^2
$$
with some  $\rho^* \in [0,1]$, where
\begin{align}  \label{eq:3fa}
\fa_{i,\ell}(\eta)  & = \bar{\eta}_{0}^\ell - \bar{\eta}_{-e_i}^\ell  \\
& = \ell_*^{-d}  \left\{  \t_{\ell e_i} \widetilde{A}_{\ell,i} -
\t_{- (\ell+1)e_i} \widetilde{A}_{\ell,i} \right\}.  \notag
\end{align}
Therefore, after cancelling common terms 
$\ell_*^{-d}  D_{ij}(\bar{\eta}_{0}^\ell) \t_{\ell e_i} \widetilde{A}_{\ell,i}$,
we have
\begin{align}  \label{eq:G-E}
G(\eta) & = \ell_*^{-d} \left\{  D_{ij}(\bar{\eta}_{0}^\ell)  -
D_{ij}(\bar{\eta}_{-e_i}^\ell) \right\} \t_{- (\ell+1)e_i} \widetilde{A}_{\ell,i}
- \frac12 D_{ij}'(\rho^*) \fa_{i,\ell}(\eta)^2 
       \\  
& = \ell_*^{-d} D_{ij}'(\rho^{**}) \fa_{i,\ell}(\eta)
    \t_{- (\ell+1)e_i} \widetilde{A}_{\ell,i}  
    - \frac12 D_{ij}'(\rho^*) \fa_{i,\ell}(\eta)^2
    \notag   \\
& = \ell_*^{-d} D_{ij}'(\rho^{**}) \fa_{i,\ell}(\eta)
    \big(\t_{- (\ell+1)e_i} \widetilde{A}_{\ell,i}  - \bar A_{\ell,m}\big)
         \notag  \\
& \hskip 10mm
+ \ell_*^{-d} D_{ij}'(\rho^{**}) \fa_{i,\ell}(\eta)
     \bar A_{\ell,m}  
    - \frac12 D_{ij}'(\rho^*) \fa_{i,\ell}(\eta)^2,
    \notag    
\end{align}
with some  $\rho^{**} \in [0,1]$, where $\bar A_{\ell,m} := 
\lan \widetilde{A}_{\ell,i}\ran_{\La(\ell+1),m} \in [0, \ell_*^{d-1}]$.
We have used  Taylor's
formula again.   Since  $D_{ij}'(\rho)$  is bounded due to 
\eqref{1.D} and 
\begin{equation}
\lan \{ \t_{- (\ell+1)e_i} \widetilde{A}_{\ell,i} - \bar A_{\ell,m}\}^4 \ran_{\La(\ell+1),m} 
   \le C_3 \ell^{2(d-1)},                                \label{3.8-A}
\end{equation}
\begin{equation}
\lan \fa_{i,\ell}(\eta)^4 \ran_{\La(\ell+1),m} \le C_3 \ell^{-2(d+1)},
                                                 \label{3.9-A}
\end{equation}
we obtain that (\ref{3.7-A}) is bonded by
$$
C\b \|\partial \la\|_\infty^2 \ell^{d+2} \{ \ell^{-2d} \sqrt{\ell^{-2(d+1)} \ell^{2(d-1)} }
+ \ell^{-d-3} + \ell^{-2(d+1)}\}
\le 3 C \b \|\partial \la\|_\infty^2  \ell^{-1}.
$$
Here, \eqref{3.8-A} (i.e.\ in CLT scaling) follows by using 
Lemma A.2 in Appendix of \cite{FUY} taking $n=(\ell+1)_*^d$ (indeed, the sum
of the terms of the form $\lan(\eta_1-m/n)^2(\eta_2-m/n)^2\ran (\le 1)$ gives 
the leading order),
\eqref{3.9-A} follows from \eqref{eq:3fa} and \eqref{3.8-A}, and the expectation
under $\nu_{\La(\ell+1),m}$ of the square of the middle term on the
right-hand side of \eqref{eq:G-E} is $O(\ell^{-d-3})$ from \eqref{3.9-A}.

For $Q_{N,\ell+1}^{(1)} \equiv Q_{N,\ell+1}^{(1)}(\partial\la,G)$ 
in \eqref{eq:3.7-0}, since $\| G\|_\infty \le C_4 \ell^{-2}$ 
from \eqref{eq:G-E}, $0\le \widetilde{A}_{\ell,i}\le \ell_*^{d-1}$ and
$|\fa_{i,\ell}(\eta)|\le \ell_*^{-1}$,  we have by \eqref{eq:3QNell}
in Lemma \ref{Lemma 3.3}
$$
|Q_{N,\ell+1}^{(1)}| \le C N^{-1} \ell^{(2d+4)} \ell^{-6} \|\partial \la\|_\infty^3
= C N^{-1} \ell^{(2d-2)} \|\partial \la\|_\infty^3.
$$

Summarizing these, we obtain the desired upper bound for 
$Q_{N,\ell}^{(3)}(\la)$.

\subsection{Proof of Proposition \ref{Proposition 5.1} (CLT variance)}
\label{sec:9.5}

Recall \eqref{eq-3.A}, \eqref{eq-3.B}, \eqref{eq-3.H}
for $A_\ell$, $B_\ell$, $H_\ell$ and $\Psi_i = \eta_{e_i} - \eta_0$ 
given above \eqref{eq-3.A}.  Recall also $F\in \mathcal{F}_0^d$, 
$\th, \tilde{\th} \in \R^d: |\th| = |\tilde\th|=1$,
$m\in [0,\ell_*^d]\cap\Z$  and  $\zeta\in\mathcal{X}_{\La(\ell)^c}$.
The proof is divided into three steps.

{\it Step 1.} (Known formulas)
We use the following two identities shown in the proof of Proposition 5.1 
of \cite{FUY}:
\begin{align}  \label{5.3}
   & \ell_*^{-d}\Delta_{\ell,m,\zeta} \big(\th\cdot(B_\ell-H_\ell)\big)   \\       
 & =  \frac1{2}\ell_*^{-d} \sum_{b=\tau_x e^*_i \in (\La(\ell))^*} 
\;\;\bigg\lan c_b \Big[ \th\cdot e_i (\xi_{x+e_i} - \xi_x) 
   - \pi_b \Big( \sum_{y\in\La(\ell-n)} \t_y F \cdot\th \Big) \Big]^2 
      \bigg\ran_{\!\!\La(\ell),m},
      \notag
\end{align}
and
\begin{align}  \label{5.4}
& \ell_*^{-d} \Delta_{\ell,m,\zeta} \big(\tilde{\th}\cdot A_\ell, 
    \th\cdot(B_\ell-H_\ell)\big)         \\  
& = \ell_*^{-d} \bigg\lan 
\Big\{\sum_{i=1}^d \tilde{\th}_i  \sum_{y:\tau_y e^*_i\in (\La(\ell))^*}
  \t_y\Psi_i \Big\}    
   \Big\{ \sum_{x\in\La(\ell)} (\th\cdot x)\xi_x + \sum_{x\in\La(\ell-n)} 
     \t_x (\th\cdot F)\Big\}
      \bigg\ran_{\La(\ell),m}       \notag   \\
& = \ell_*^{-d}
   \sum_{i=1}^d \tilde{\th}_i  \sum_{y:\tau_y e^*_i\in (\La(\ell))^*}
   \sum_{x\in\La(\ell)}
\Big\lan  \t_y\Psi_i 
   \big\{  (\th\cdot x)\xi_x + 1_{x\in \La(\ell-n)} 
     \t_x (\th\cdot F)\big\}
      \Big\ran_{\La(\ell),m}.        \notag
\end{align}
As we noted below Proposition \ref{Proposition 5.1} in Section \ref{sec:5.3-D},
these are shown noting that $B_{\ell,\zeta}-H_{\ell,\zeta,F}$ is represented as
$L_{\La(\ell),\zeta} G$ for some function $G$ and therefore the operator 
$L_{\La(\ell),\zeta}$ cancels with $(-L_{\La(\ell),\zeta})^{-1}$ in the 
CLT variance and the CLT covariance.

{\it Step 2.} (Proof of \eqref{5.1})
To show \eqref{5.1}, we first replace the expectation on the right-hand side
of \eqref{5.3} under the canonical equilibrium measure $\nu_{\La(\ell),m}$
by that under the grandcanonical equilibrium measure $\nu_\rho$ with
$\rho=m/\ell_*^d$:
\begin{align}  \label{5.5}
   \frac1{2} \ell_*^{-d} \sum_{b=\tau_x e^*_i \in (\La(\ell))^*} 
\;\;\bigg\lan c_b \Big[ \th\cdot e_i (\xi_{x+e_i} - \xi_x) 
   - \pi_b \Big( \sum_{y\in\La(\ell-n)} \t_y F \cdot\th \Big) \Big]^2 
      \bigg\ran(m/\ell_*^d).
\end{align}
By the equivalence of ensembles \eqref{eq:3EE} (with $|\De| \le r(F)^d$)
and recalling $|\th|=1$, 
the error for this replacement is bounded by
$$
Cr(F)^d(1+\vertiii{F}_{0,\infty}^2) \ell^{-d}.
$$
Then, if $b=\tau_x e^*_i \in (\La(\ell-2n))^*$, the sum in $y\in \La(\ell-n)$ 
in \eqref{5.5} can be replaced by the sum in $y\in \Z^d$,
Thus, by the translation invariance of $\nu_\rho$, 
\eqref{5.5} is equal to
\begin{align*}
   \frac1{2}\ell_*^{-d} \sum_{i=1}^d \sharp& \{b\in (\La(\ell-2n))^*: i\text{th directed}\}  \\
   & \times\bigg\lan c_{e_i^*} \Big[ \th\cdot e_i (\xi_{e_i} - \xi_0) 
   - \pi_{e_i^*} \Big( \sum_{y\in\Z^d} \t_y F \cdot\th \Big) \Big]^2 
      \bigg\ran(m/\ell_*^d),
\end{align*}
with an error bounded by
$$
C(1+\vertiii{F}_{0,\infty}^2)
\frac{|(\La(\ell))^* \setminus (\La(\ell-2n))^*|}{\ell_*^d} 
\le C'(1+\vertiii{F}_{0,\infty}^2)n \ell^{-1}.
$$
Finally, noting that $\sharp\{b\in (\La(\ell-2n))^*: i\text{th directed}\}/\ell_*^d
= \frac{(2(\ell-2n))^d}{(2\ell+1)^d} = 1+O(\frac{n}\ell)$ and recalling
$n=r(F)+1$, the above expression
can be replaced by
\begin{align}  \label{5.6}
   \frac1{2} \sum_{i=1}^d 
    \;\;\bigg\lan c_{e_i^*} \Big[ \th\cdot e_i (\xi_{e_i} - \xi_0) 
   - \pi_{e_i^*} \Big( \sum_{y\in\Z^d} \t_y F \cdot\th \Big) \Big]^2 
      \bigg\ran(m/\ell_*^d),
\end{align}
with an error bounded by
$$
Cr(F) (1+\vertiii{F}_{0,\infty}^2) \ell^{-1}.
$$
However, by the identity at the bottom of p.\ \!22 of \cite{FUY}, \eqref{5.6}
is equal to $\frac12 \th\cdot \widehat{c}(m/\ell_*^d;F)\th$ and we obtain the estimate
\eqref{5.1}.

{\it Step 3.} (Proof of \eqref{5.2})
To show \eqref{5.2}, we first note the exchangeability under the canonical
measure $\nu_{\La(\ell),m}$, that is, $\eta$ and $\eta^{x,y}$ have the same
distributions under $\nu_{\La(\ell),m}$ for $x,y\in \La(\ell)$,
$x\not=y$.  This is easily seen from the uniformity of $\nu_{\La(\ell),m}$ or
the invariance of $\sum_{z\in \La(\ell)} \eta_z$
under the transform $\eta\mapsto \eta^{x,y}$.
In the expression  \eqref{5.4}, the right-hand side looks like $O(\ell^{2d}/\ell^d)$, 
but due to the exchangeability under $\eta\mapsto \eta^{y,y+e_i}$, we have
$$
\lan  \t_y\Psi_i \, \xi_x \ran_{\La(\ell),m} 
= \lan  (\eta_{y+e_i}-\eta_y) \, \xi_x \ran_{\La(\ell),m} =0
$$
if $x\not= y, y+e_i$ and
$$
\lan  \t_y\Psi_i \, \t_x (\th\cdot F)\ran_{\La(\ell),m} = 0
$$
if $y, y+e_i \notin \t_x (\text{supp} F)$, i.e., $y-x, y-x+e_i \notin \text{supp} F$,
especially, if $|y-x|\ge r(F)+1$.

In particular, the first part of the sum \eqref{5.4} (related to $\xi_x$)
is equal to
\begin{align*}
 \ell_*^{-d} &
   \sum_{i=1}^d \tilde{\th}_i  \sum_{y:\tau_y e^*_i\in (\La(\ell))^*}
   \Big\lan  \t_y\Psi_i 
   \big\{  (\th\cdot y)\xi_y + (\th\cdot y+e_i)\xi_{y+e_i} \big\}
      \Big\ran_{\La(\ell),m} \\
  &= \ell_*^{-d}
   \sum_{i=1}^d \tilde{\th}_i  \sum_{y:\tau_y e^*_i\in (\La(\ell))^*}
      \big\{  -(\th\cdot y) + (\th\cdot y+e_i) \big\}
      \chi_{\La(\ell),m} \\ 
& = \frac{(2\ell)^d}{(2\ell+1)^d} (\tilde{\th}\cdot \th) \chi_{\La(\ell),m},
\end{align*}
by noting that both $x=y$ and $y+e_i$ satisfy $x\in \La(\ell)$ and then
recalling the uniformity of $\nu_{\La(\ell),m}$, where
$$
\chi_{\La(\ell),m} :=- \lan \Psi_i \xi_0\ran_{\La(\ell),m}
\; \big( =-\lan (\xi_{e_i}- \xi_0)\xi_0 \ran_{\La(\ell),m}\big).
$$
However, by the equivalence of ensembles \eqref{eq:3EE}, we have
$$
|\chi_{\La(\ell),m} - \chi(m/\ell_*^d)| \le C \ell^{-d}.
$$
Therefore, the first part of the sum \eqref{5.4} behaves as
$$
 (\tilde{\th}\cdot \th) \chi(m/\ell_*^d) + O(1/\ell).
$$
 
On the other hand, as we pointed out above, the second part of the sum \eqref{5.4}
(related to $F$)
can be restricted to the terms of $x\in \La(\ell): |x-y|\le r(F)$.  Then, applying
the equivalence of ensembles
\eqref{eq:3EE} (with $|\De| \le r(F)^d$), we can replace it by
\begin{align} \label{5.8}
\ell_*^{-d}
   \sum_{i=1}^d \tilde{\th}_i  \sum_{y:\tau_y e^*_i\in (\La(\ell))^*}
   \sum_{x\in\La(\ell): |x-y|\le r(F)}
\Big\lan  \t_y\Psi_i \,
   1_{x\in \La(\ell-n)} 
     \t_x (\th\cdot F)
      \Big\ran (m/\ell_*^d)
\end{align}
with an error bounded by
$$
C r(F)^{2d} \| F\|_\infty \ell^{-d}.
$$
Denoting $\rho=m/\ell_*^d$, since $\lan  \t_y\Psi_i \; \t_x (\th\cdot F)\ran_\rho =0$
if $|x-y|\ge r(F)+1$, we can drop the condition $|x-y|\le r(F)$ in the sum \eqref{5.8}
and obtain for each $i$
\begin{align*}
\ell_*^{-d} 
 &    \sum_{y:\tau_y e^*_i\in (\La(\ell))^*}
   \sum_{x\in\La(\ell-n): |x-y|\le r(F)}
\lan  \t_y\Psi_i \; \t_x (\th\cdot F)\ran_\rho  \\
& = \ell_*^{-d} 
     \sum_{y:\tau_y e^*_i\in (\La(\ell))^*}
   \sum_{x\in\La(\ell-n)}
\lan  \t_y\Psi_i \; \t_x (\th\cdot F)\ran_\rho
=0.
\end{align*}
Here, to see that the last sum vanishes, we first observe
$$
\sum_{y:\tau_y e^*_i\in (\La(\ell))^*} \t_y\Psi_i 
= \sum_{y:\tau_y e^*_i\in (\La(\ell))^*} (\eta_{y+e_i}-\eta_y)
= \sum_{z_+\in \partial_+^i\La(\ell)} \eta_{z_+}
-\sum_{z_-\in \partial_-^i\La(\ell)} \eta_{z_-},
$$
where $\partial_\pm^i\La(\ell) = \{z=(z_j)_{j=1}^d; z_i = \pm\ell, 
|z_j|\le \ell \, (j\not=i)\}$,
and $\t_x (\th\cdot F)$ is $\La(\ell-1)$-measurable for 
$x\in \La(\ell-n)$.
In particular, these variables are independent under $\nu_\rho$.
Since $\lan \eta_{z_+} -\eta_{z_-}\ran_\rho=0$, we see that the 
last sum vanishes.

Summing up these calculations and bounds, we get \eqref{5.2}.
This completes the proof of Proposition \ref{Proposition 5.1}.

\subsection{Proof of Lemma \ref{Lem:2.3} ($g(t)\le 0$ for small $\de>0$)}
\label{sec:9.6}

Recall the definition of $g(t) = g_{\de,K}(t)$ given at the beginning of
Section \ref{sec:6.3-D}.  In this definition, 
the large deviation rate function $I(u;\la)$ has a bound
\begin{equation}  \label{eq:I-I}
I(u;\la) \ge C \{u-\bar\rho(\la)\}^2,\quad \rho\in [0,1],
\end{equation}
for some $C>0$, since 
$\frac{\partial^2 I}{\partial u^2}(u;\la) = \frac1{\chi(u)}> 0$ 
and $I(u;\la) = 0$ if and only if  $u=\bar\rho(\la)$. 
Note that $I(u;\la)$ is determined only
from $\nu_\la$ and especially it doesn't depend on $K$.
In particular, we have 
\begin{equation}
I(u;\la(t,v)) \ge C \{u-\rho(t,v)\}^2, \label{2.7}
\end{equation}
for some $C>0$.

On the other hand, we have  $\si(\rho(t,v);t,v) = 0$  and moreover,
noting that $\frac{\partial}{\partial u} \si_G(\rho(t,v);t,v)
= \frac{1}{\chi(\rho(t,v))}f(\rho(t,v))$,
\begin{align}
\frac{\partial \si}{\partial u} (\rho(t,v);t,v)   &
  =  - \dot{\la}(t,v) + \Tr \left(\partial^2 \la(t,v) D(\rho(t,v))
   \right)   \label{2.8}  \\
& + \frac12 \left(\partial\la(t,v), \widehat{c}'(\rho(t,v))
     \partial\la(t,v) \right) + \frac{K}{\chi(\rho(t,v))}f(\rho(t,v))= 0,  \notag
\end{align}
where  $\widehat{c}'$  denotes the matrix obtained by differentiating
each element of  $\widehat{c}$ in $\rho$.  In fact, one can easily 
recognize that (\ref{2.8}) is equivalent to the hydrodynamic equation (\ref{1.6}) 
itself by noting \eqref{2.9}.
Therefore, by Taylor's formula
$$
\si(u;t,v) = \frac12 \frac{\partial^2 \si}{\partial u^2}(u_0;t,v) 
  \{u-\rho(t,v)\}^2
$$
for some  $u_0 \in [0,1]$; note that \eqref{1.D} implies
$\si \in C^\infty$  as a function of  $u$.  However, $\frac{\partial^2 }{\partial u^2}
(\si-K\si_G)(u;t,v)$ is bounded on $[0,1]\times[0,T]\times \T^d$ and also
independent of $K$, while
\begin{align*}
\Big|K\frac{\partial^2 \si_G}{\partial u^2}(u;t,v)\Big|
= \Bigg| K \frac{\partial^2 }{\partial u^2}  \bigg[
\Big( \frac{\lan c^+\ran_u}{\rho(t,v)}      
  -  \frac{\lan c^-\ran_u}{1-\rho(t,v)}\Big)\{ u-\rho(t,v)\} \bigg]\Bigg|
\le C_1 K,
\end{align*}
for some $C_1>0$, since $0<c\le \rho(t,v)\le 1-c<1$ for some
$c\in (0,1/2)$ by Lemma \ref{lem:max} below; also recall the comment
below  Lemma \ref{Lemma 3.2}.  Thus we obtain
$$
\frac{\de}K \cdot|\si(u;t,v)| \le \de \Big(C_1+\frac{C_2}K\Big) \{u-\rho(t,v)\}^2.
$$
This combined with (\ref{2.7}) 
completes the proof of (\ref{2.6}) for $g(t)$ recalling $K\ge 1$.

\section{Schauder estimates}  \label{sec:Schauder}

Here we give the proof of the Schauder estimates \eqref{eq:Schauder} for the 
solution $\rho=\rho(t,v)\equiv \rho_K(t,v)$ of \eqref{1.6}.  We apply the results 
given in \cite{Li96} for linear second order parabolic partial differential 
equations (PDEs).  See also 
\cite{LSU}, but an explicit dependence of the constants in 
the estimates on several data of the equation is not clearly indicated 
as in \eqref{7.2}, \eqref{7.3} below.

Before showing the Schauder estimates, we briefly note that the equation
\eqref{1.6} has a unique classical solution $\rho(t,v)\in C^{1,3}([0,\infty) \times
\T^d)$ under the condition $\rho_0\in C^4(\T^d)$ for its initial value;
recall \eqref{eq:1.12} in which $\rho_0\in C^5(\T^d)$ is assumed.
In fact, noting that \eqref{1.6} is a quasi-linear equation
in divergence form (0.1) in Chapter V of \cite{LSU}, p.\ \!417, by Theorem 6.1 of
\cite{LSU}, p.\ \!452, we see that it has a unique classical solution 
$\rho(t,v)\in C^{1,2}([0,\infty) \times\T^d)$  if $\rho_0\in C^3(\T^d)$.
Note that the consistency condition (6.3) assumed in Theorem 6.1 is
unnecessary in our setting, since $\T^d$ has no boundary.  To obtain
$C^{1,3}$-property, we consider the equation for the derivative
$\rho_{(k)}:= \partial_{v_k}\rho(t,v)$ for each fixed $k: 1\le k \le d$.
By a simple calculation, we see that $\rho_{(k)}$ satisfies the equation (0.1) of
\cite{LSU} with
\begin{align*}
& a_i(v,t,p) = \sum_{j=1}^d D_{ij}(\rho(t,v)) p_j
+ \sum_{j=1}^d D_{ij}'(\rho(t,v)) \partial_{v_k}\rho(t,v) \partial_{v_j}\rho(t,v), \\
& a(v,t) = -f'(\rho(t,v)) \partial_{v_k}\rho(t,v),
\end{align*}
where we regard that $\rho(t,v)$, $\partial_{v_k}\rho(t,v)$ and $\partial_{v_j}\rho(t,v)$ 
are all given.  Then, applying Theorem 6.1 of \cite{LSU} again, we see that
$\rho_{(k)}\in C^{1,2}([0,\infty) \times\T^d)$ if $\partial_{v_k}\rho_0\in C^3(\T^d)$
for each $k$.  This shows $\rho(t,v)\in C^{1,3}([0,\infty) \times
\T^d)$ under the condition $\rho_0\in C^4(\T^d)$.

Now we show the Schauder estimates.
To apply the PDE results of \cite{Li96}, i.e.\ the estimate \eqref{7.2} below
which requires the H\"older properties of several data in the PDE,
the first step is to show the H\"older continuity of the solution of the 
nonlinear PDE \eqref{1.6}. 

For this purpose, first in \eqref{1.6} dropping the term $K f(\rho(t,v))$ 
and replacing $D(\rho(t,v))$ by $D(t,v) = \{D_{ij}(t,v)\}_{1\le i,j \le d}$
which is symmetric matrix-valued Borel measurable function on
$[0,T]\times \T^d$ satisfying a bound similar to \eqref{eq:1.9}:
$ c_* |\th|^2 \le (\th,D(t,v)\th) \le c^*|\th|^2$ for $\th\in \R^d$ with $c_*, c^*>0$, 
we consider a solution $u^{(0)}=u^{(0)}(t,v)$ of the linear parabolic PDE
\begin{align}  \label{eq:7D}
\partial_t u^{(0)} = \sum_{i,j=1}^d \partial_{v_i}\{ D_{ij}(t,v) \partial_{v_j} u^{(0)}\}, 
\quad t\ge 0, v \in \T^d,
\end{align}
with initial value $u_0$.  Let $p(t,v;s,\xi)$, $v,\xi\in \T^d$,
$0\le s <t\le T$  be the associated fundamental solution.
Then, we have the following Nash H\"older estimates; cf.\ \cite{St},
\cite{QX} and related estimates
can also be found in \cite{LSU}.

\begin{lem}  \label{lem:7.1}
There exist $\si\in (0,1]$ and $C>0$ such that
\begin{align}  \label{7.0-A}
&  |u^{(0)}(t_1, v_1) - u^{(0)}(t_2, v_2)| 
\le C \| u_0\|_\infty \Big( \frac{|t_1-t_2|^{1/2} \vee
 |v_1-v_2|}{(t_1\wedge t_2)^{1/2}} \Big)^\si,
\intertext{for $t_1, t_2 \in (0,T], \; v_1, v_2 \in \T^d$ and }
&  |p(t_1, v_1; 0,\xi_1) - p(t_2, v_2;0,\xi_2)| \le \frac{C}{(t_1\wedge t_2)^{d/2}}
\Big( \frac{ |t_1-t_2|^{1/2} \vee  |v_1-v_2| \vee |\xi_1-\xi_2|}
{(t_1\wedge t_2)^{1/2}} \Big)^\si,
\label{7.0-B}
\end{align}
for $t_1, t_2 \in (0,T], \; v_1, v_2, \xi_1, \xi_2 \in \T^d$.
\end{lem}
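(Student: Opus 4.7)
The plan is to deduce both estimates from the classical Nash--De Giorgi--Moser parabolic H\"older regularity theorem for linear divergence-form equations with bounded measurable coefficients; see, e.g., \cite{St}, and for the formulation on $\T^d$ also \cite{LSU}. Concretely, that theorem provides $\si \in (0,1]$ and $C>0$, depending only on the ellipticity constants $c_*, c^*$ and the dimension $d$, such that every bounded weak solution $u$ of \eqref{eq:7D} on a parabolic cylinder $Q_R(t_0,v_0) := (t_0 - R^2, t_0] \times B_R(v_0)$ satisfies
\begin{equation*}
|u(\t_1, w_1) - u(\t_2, w_2)| \le C \|u\|_{L^\infty(Q_R)} \Big(\tfrac{|\t_1 - \t_2|^{1/2} \vee |w_1 - w_2|}{R}\Big)^\si
\end{equation*}
for $(\t_1, w_1), (\t_2, w_2) \in Q_{R/2}(t_0,v_0)$. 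The two auxiliary inputs I need are (i) an a priori $L^\infty$ bound for $u^{(0)}$ and (ii) Aronson's Gaussian upper bound for $p$.

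For \eqref{7.0-A}, I would first establish the $L^\infty$ contraction $\|u^{(0)}(t,\cdot)\|_\infty \le \|u_0\|_\infty$ by testing the equation against the truncation $(u^{(0)} - \|u_0\|_\infty)_+$ and invoking the uniform ellipticity of $D(t,v)$; this is the standard weak maximum principle for divergence-form equations. Given this bound, for $t_1, t_2 \in (0,T]$ and $v_1, v_2 \in \T^d$ I would set $t_0 := t_1 \vee t_2$ and $R := \tfrac12 (t_1 \wedge t_2)^{1/2}$, so that $Q_R(t_0, v_0) \subset (0,T] \times \T^d$ for any $v_0 \in \T^d$. If the parabolic distance $|t_1-t_2|^{1/2} \vee |v_1-v_2|$ exceeds $R$, the bound \eqref{7.0-A} holds trivially after adjusting $C$; otherwise choose $v_0$ as the midpoint of $v_1, v_2$ and apply the Nash--Moser estimate above, using $\|u^{(0)}\|_{L^\infty(Q_R)} \le \|u_0\|_\infty$.

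For \eqref{7.0-B}, I would combine the same H\"older estimate with the Aronson--Nash upper bound $p(t, v; 0, \xi) \le C t^{-d/2}$, another classical consequence of Nash's method (cf.\ \cite{St}, \cite{QX}). For fixed $\xi \in \T^d$, the map $(t,v) \mapsto p(t,v;0,\xi)$ is a bounded weak solution of \eqref{eq:7D} on any cylinder $Q_R(t_0, v_0)$ with $R < \sqrt{t_0}$, with $L^\infty$-norm controlled by $C(t_0 - R^2)^{-d/2}$. Choosing $R = \tfrac12 (t_1 \wedge t_2)^{1/2}$ as before yields H\"older continuity of $p$ in the variables $(t, v)$ with the claimed prefactor $(t_1 \wedge t_2)^{-d/2}$. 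To promote this to joint continuity in $\xi$ as well, I would invoke the symmetry $D_{ij} = D_{ji}$, which makes the divergence-form operator formally self-adjoint in $L^2(\T^d)$; consequently the kernel is symmetric, $p(t, v; 0, \xi) = p(t, \xi; 0, v)$, and H\"older continuity in the second spatial argument follows from that in the first. A triangle inequality then assembles the three pieces (in $t$, in $v$, in $\xi$) into \eqref{7.0-B}.

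The main obstacle here is not conceptual but is rather the careful bookkeeping of the scaling: one must verify that the parabolic cylinders on which Nash--Moser is applied genuinely lie inside $(0,T] \times \T^d$, and that the choice $R \sim (t_1 \wedge t_2)^{1/2}$ reproduces exactly the prefactor $(t_1 \wedge t_2)^{-d/2}$ and the denominator $(t_1 \wedge t_2)^{1/2}$ inside the parentheses of \eqref{7.0-B}. Since $\T^d$ is boundaryless, no boundary regularity is needed and periodic covering arguments are trivial. The symmetric-kernel trick in the third paragraph relies crucially on the standing symmetry of $D$; for a non-symmetric coefficient one would instead apply Nash--Moser separately to the backward adjoint equation in $(s,\xi)$, which works equally well but is longer to state.
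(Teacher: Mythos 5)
Your overall strategy — Nash--De Giorgi--Moser interior H\"older estimates on parabolic cylinders of radius $R\sim(t_1\wedge t_2)^{1/2}$, together with the $L^\infty$ contraction for $u^{(0)}$ and Aronson's upper bound for $p$ — is exactly what the paper does; the paper merely cites the pre-packaged formulations in \cite{QX} (Theorem 4.1 for \eqref{7.0-A}, Corollary 4.2 for \eqref{7.0-B}) and splits into two cases according to whether $t_1\vee t_2\ge 2(t_1\wedge t_2)$ or not, which sidesteps the cylinder-placement bookkeeping you flag at the end.

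One step in your argument for \eqref{7.0-B} is not correct as stated. The symmetry $p(t,v;0,\xi)=p(t,\xi;0,v)$ does \emph{not} follow from $D_{ij}=D_{ji}$ once the coefficients depend on $t$: for each fixed $t$ the operator $L_t=\partial_{v_i}(D_{ij}(t,\cdot)\partial_{v_j}\cdot)$ is self-adjoint on $L^2(\T^d)$, but the propagator $U(t,s)$ is a time-ordered product of these operators, and $U(t,s)^*$ reverses the ordering, so $U(t,s)$ is not self-adjoint and its kernel is not symmetric in $(v,\xi)$. In the present application $D(t,v)=D(\rho(t,v))$ is genuinely time-dependent, so the ``symmetric-kernel trick'' in your third paragraph does not apply. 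The correct route is the fallback you yourself mention: as a function of the backward variables $(s,\xi)$, the kernel $p(t,v;s,\xi)$ solves the adjoint backward parabolic equation $\partial_s p + \partial_{\xi_i}(D_{ij}(s,\xi)\partial_{\xi_j}p)=0$, to which Nash--Moser applies equally well; this is what the cited Corollary 4.2 of \cite{QX} already encodes. You should promote that fallback to the main argument and drop the symmetry claim. A second, smaller point: with $R=\tfrac12(t_1\wedge t_2)^{1/2}$ and $t_0=t_1\vee t_2$, membership of both $(t_1,v_1)$ and $(t_2,v_2)$ in $Q_{R/2}(t_0,v_0)$ forces $|t_1-t_2|\le R^2/4$, so your ``trivial case'' threshold should be $R/2$ rather than $R$; this only changes the constant.
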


\begin{proof}
To show \eqref{7.0-A}, we separate into two cases: $(t_1 \vee t_2) \ge 2
(t_1 \wedge t_2)$ and $(t_1 \vee t_2) < 2(t_1 \wedge t_2)$.
In the case $(t_1 \vee t_2) \ge 2(t_1 \wedge t_2)$, since $|t_1-t_2|= 
(t_1 \vee t_2) - (t_1 \wedge t_2) \ge (t_1 \wedge t_2)$, \eqref{7.0-A} 
is obvious with $C=2$, noting $\|u^{(0)}(t)\|_\infty \le \|u_0\|_\infty$ by the 
maximum principle; see Lemma \ref{lem:max} below with $f\equiv 0$.

In the other case $(t_1 \vee t_2) <2 (t_1 \wedge t_2)$, we take $t_0 = 
2(t_1 \wedge t_2)$, $R= \sqrt{2} (t_1 \wedge t_2)^{1/2}$, $\de = \sqrt{2/3}$ and
$x_0=v_1$ in Theorem 4.1 of \cite{QX} (or Theorem II.1.8 of \cite{St},
although it is stated only for the time-homogeneous case).  
Note that $t_0-R^2= 0$ and $t_1, t_2 \in (t_0-(\de R)^2, t_0) = 
\big( 2/3(t_1 \wedge t_2),2(t_1 \wedge t_2)\big)$.  Then, by  Theorem 4.1 
of \cite{QX}, \eqref{7.0-A} holds if $|v_1-v_2| \le \sqrt{4/3}(t_1 \wedge t_2)^{1/2}$.
If $|v_1-v_2| \ge \sqrt{4/3}(t_1 \wedge t_2)^{1/2}$, \eqref{7.0-A} is
obvious with $C=2$.

To show \eqref{7.0-B}, taking $\de=(t_1\wedge t_2)^{1/2}$ in Corollary 4.2 
of \cite{QX} (or Corollary II.1.9 of \cite{St}), this holds if 
$|v_1-v_2| \vee |\xi_1-\xi_2| \le (t_1\wedge t_2)^{1/2}$.
Conversely, if $|v_1-v_2| \vee |\xi_1-\xi_2| \ge (t_1\wedge t_2)^{1/2}$,
by Aronson's Gaussian upper bound (see Theorem 1.2 of \cite{QX}), we have
\begin{align*}
|p(t_1, v_1; 0,\xi_1) - p(t_2, v_2;0,\xi_2)| 
& \le p(t_1, v_1; 0,\xi_1) + p(t_2, v_2;0,\xi_2) \\
& \le C t_1^{-d/2} +  C t_2^{-d/2} \le 2C (t_1\wedge t_2)^{-d/2}
\end{align*}
and this is bounded by the right-hand side of \eqref{7.0-B} (with $C$ replaced
by $2C$).
\end{proof}

This lemma is used to show the H\"older continuity of the solution
$\rho(t,v)\equiv \rho_K(t,v), K\ge 1$ of \eqref{1.6} with $\rho(0)= \rho_0$.
The regularity of $\rho_0$ allows to remove the singularity of $\rho(t,v)$ 
near $t=0$; see also the comment above Lemma \ref{lem:6.3}.

\begin{lem}  \label{lem:7.2}
Assume the condition \eqref{eq:1.12} for $\rho_0$.
Then, we have
\begin{align}  \label{7.1}
|\rho(t_1, v_1) - \rho(t_2, v_2)| \le CK\big( |t_1-t_2|^{\si/2} + |v_1-v_2|^\si\big),
\quad t_1, t_2 \in [0,T], \; v_1, v_2 \in \T^d,
\end{align}
for some $\si \in (0,1]$.  Since $D(\rho)=\{D_{ij}(\rho)\}_{1\le i,j\le d} 
\in C^\infty([0,1])$,
we have a similar H\"older estimate for $D_{ij}(\rho(t,v))$.
\end{lem}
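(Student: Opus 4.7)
The plan is to exploit Duhamel's formula in combination with the heat-kernel estimates of Lemma \ref{lem:7.1}: freezing the diffusion coefficient along the solution reduces \eqref{1.6} to a linear divergence-form parabolic equation with bounded measurable coefficients and a bounded forcing of size $O(K)$, to which the tools already developed apply. Throughout I would fix $c_0\in(0,1/2)$ furnished by the comparison theorem (Lemma \ref{lem:max} below, used together with $0<\rho_0<1$ and the signs $f(0)>0,\,f(1)<0$) so that $c_0\le \rho(t,v)\le 1-c_0$ on $[0,T]\times\T^d$ independently of $K$. This guarantees uniform ellipticity of $D(t,v):=D(\rho(t,v))$ via \eqref{eq:1.9} and the bound $\|f(\rho(\cdot,\cdot))\|_\infty\le M$, so the forcing $g(t,v):=Kf(\rho(t,v))$ obeys $\|g\|_\infty\le CK$ with $C$ independent of $K$.

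Regarding $\rho$ as the unique classical solution of the linear equation $\partial_t u=\sum_{i,j}\partial_{v_i}(D_{ij}(t,v)\partial_{v_j}u)+g(t,v)$ with initial datum $\rho_0$, Duhamel's principle gives
\begin{equation*}
\rho(t,v)=u^{(0)}(t,v)+\int_0^t\!\!\int_{\T^d} p(t,v;s,\xi)\,g(s,\xi)\,d\xi\,ds,
\end{equation*}
where $u^{(0)}$ solves \eqref{eq:7D} with initial datum $\rho_0$ and $p$ is the associated fundamental solution. Lemma \ref{lem:7.1} (applied with time-shifted coefficients to carry the bounds from $s=0$ to general $s$) then controls both pieces separately.

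For the homogeneous part the bare estimate \eqref{7.0-A} carries a prefactor $(t_1\wedge t_2)^{-\sigma/2}$ that must be absorbed using the smoothness of $\rho_0$. I would split into cases depending on whether $r:=|t_1-t_2|^{1/2}\vee|v_1-v_2|$ is smaller or larger than $(t_1\wedge t_2)^{1/2}$: in the large-time regime Lemma \ref{lem:7.1} applies directly, while in the small-time regime I would combine the elementary bound $|u^{(0)}(t,v)-\rho_0(v)|\le Ct\,\|\rho_0\|_{C^2}$ (obtained via the probabilistic representation plus Aronson's Gaussian upper bound, or by testing the equation against $\rho_0$) with $|\rho_0(v_1)-\rho_0(v_2)|\le\|\rho_0\|_{C^1}|v_1-v_2|$, producing the desired increment bound with a constant depending only on $\|\rho_0\|_{C^2}$. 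For the inhomogeneous part $I(t,v):=\int_0^t\!\!\int p(t,v;s,\xi)\,g(s,\xi)\,d\xi\,ds$, time increments yield $|I(t_2,v)-I(t_1,v)|\le CK|t_2-t_1|$ trivially from $\int p\,d\xi\le 1$, while for spatial (and mixed) increments I would split the $s$-integral at the threshold $t-r^2$: on the near-diagonal piece $s\in(t-r^2,t)$ use Aronson's Gaussian bound to obtain a contribution $\le \|g\|_\infty r^2\le CKr^\sigma$ (valid for $r\le 1$ and $\sigma\le 2$); on the off-diagonal piece $s\in(0,t-r^2)$ apply \eqref{7.0-B}, combine with Aronson's bound so that the $\xi$-integral is dominated by $Cr^\sigma/(t-s)^{\sigma/2}$, and integrate in $s$ (integrable since $\sigma<2$) to arrive at $\le CKr^\sigma$. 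Adding the two pieces gives \eqref{7.1} with the required constant $CK$, absorbing the $K$-independent constant from the homogeneous part into $CK$ since $K\ge 1$.

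The main obstacle is the inhomogeneous part: the Duhamel kernel $p(t,v;s,\xi)$ is singular as $s\to t$, and the H\"older estimate \eqref{7.0-B} carries a prefactor $(t\wedge t')^{-d/2}$ that blows up in the same regime, so a naive application is not integrable against a merely bounded forcing. The key technical point is to balance the near-diagonal use of Aronson's Gaussian bound (which controls integrability but gives no smallness in $r$) with the off-diagonal use of \eqref{7.0-B} (which gives smallness $r^\sigma$ at the cost of a singular prefactor), and this balance is precisely what produces the final H\"older exponent $\sigma$ and the linear dependence on $K$ claimed in \eqref{7.1}.
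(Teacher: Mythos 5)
Your route is genuinely different from the paper's. The paper first records the singular estimate \eqref{7.H-Q} for any bounded forcing and any bounded measurable diffusion coefficient, and then removes the $(t_1\wedge t_2)^{-\si/2}$ prefactor in one stroke by the backward-extension trick: the solution is prolonged to $[-1,T]$ by solving a heat equation backward from $\rho_0$ and gluing, so that $\rho$ becomes the restriction to $[0,T]$ of a solution of the same type of linear equation on $[-1,T]$; on this shifted window every pair of times is at distance at least $1$ from the new initial time, and \eqref{7.H-Q} becomes non-singular. You instead try to handle the singularity head-on by a dichotomy in the homogeneous part, plus a split of the Duhamel integral at $t-r^2$ for the inhomogeneous part. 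The Duhamel-integral analysis is essentially sound (although you would need to spell out how the $\xi$-integral of the difference of kernels is bounded by $C\,r^\si/(t-s)^{\si/2}$, which requires interpolating \eqref{7.0-B} against Aronson's bound to eat the $(t-s)^{-d/2}$ prefactor).

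However, there is a genuine gap in the treatment of the homogeneous part. Your dichotomy covers only $r\ge(t_1\wedge t_2)^{1/2}$ (by smoothness of $\rho_0$) and the claim that ``in the large-time regime Lemma \ref{lem:7.1} applies directly,'' meaning $r<(t_1\wedge t_2)^{1/2}$. But in that regime \eqref{7.0-A} gives $C\,r^\si(t_1\wedge t_2)^{-\si/2}$, which is bounded by $C\,r^\si$ only if $t_1\wedge t_2\ge1$; for $r^2\le t_1\wedge t_2<1$ the prefactor is unbounded and the argument does not close. The fix is either to take the minimum of \eqref{7.0-A} with the smoothness bound $C\sqrt{t_1\wedge t_2}$ (which, optimizing over $s=t_1\wedge t_2$, only yields the weaker exponent $\si/(1+\si)$ --- still acceptable since the lemma asserts ``for some $\si$,'' but not the same $\si$ you quote), or, more cleanly, to use the backward-extension device as in the paper. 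Separately, the bound you invoke, $|u^{(0)}(t,v)-\rho_0(v)|\le Ct\,\|\rho_0\|_{C^2}$, is not available for a divergence-form operator with merely H\"older (let alone bounded measurable) coefficients; the argument via the probabilistic representation and Aronson's bound gives $C\sqrt{t}\,\|\rho_0\|_{C^1}$, which fortunately suffices in the small-time regime.
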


\begin{proof}
The proof is similar to that of Corollary 2.6 of \cite{FS} (in the discrete setting).
First, let $D(t,v)$ be as above and let $g(t,v)$ be a bounded Borel measurable 
function.  We consider the linear PDE
\begin{align*}
\partial_t u = \sum_{i,j=1}^d \partial_{v_i}\{ D_{ij}(t,v) \partial_{v_j} u\}
+ g(t,v), \quad t\ge 0, v \in \T^d,
\end{align*}
with $u(0)=u_0$.  Then, by Duhamel's formula, we have
$$
u(t,v) = u^{(0)}(t,v)+ \int_0^t ds \int_{\T^d} g(s,\xi)p(t,v;s,\xi) d\xi,
$$
where $u^{(0)}(t,v)$ is the solution of \eqref{eq:7D}. Then, by Lemma \ref{lem:7.1}
(actually only by \eqref{7.0-A}),
similar to the proof of Theorem 2.2 of \cite{FS}, we get
\begin{align} \label{7.H-Q}
|u(t_1, v_1) - u(t_2, v_2)| \le C (\|u_0\|_\infty + \| g\|_\infty)
 \Big( \frac{|t_1-t_2|^{1/2} +
 |v_1-v_2|}{(t_1\wedge t_2)^{1/2}} \Big)^\si,
\end{align}
for $t_1, t_2 \in (0,T], \; v_1, v_2 \in \T^d$.

To improve the regularity near $t=0$, we apply a  trick similar to the one used 
in the proof of 
Theorem 2.5 of \cite{FS}. We supplementarily consider the heat equation on $\T^d$
$$
\partial_s U = \De U, \quad s\in (0,1]
$$
with $U(0)=\rho_0$, and set $\hat U(t) := U(1-t)$ and $\hat h(t,v) := 
- \De \hat U(t,v)$ for $t\in [0,1)$.  Then, $\hat U(t,v)$ satisfies the equation
$$
\partial_t \hat U = - \De \hat U = \De \hat U + 2 \hat h.
$$
Since $h(t):= \hat h(1-t)= - \De U(t)$ satisfies $\partial_t h(t) = \De h(t)$
with $h(0)= -\De U(0) = -\De \rho_0$, we see by the maximum principle that
$\| h(t)\|_\infty \le \|\De\rho_0\|_\infty < \infty$ by \eqref{eq:1.12}.

To apply \eqref{7.H-Q} for our equation \eqref{1.6},
define $\hat \rho(t,v)$, $\{\hat D_{ij}(t,v)\}$ and $\hat g(t,v)$, respectively, by
\begin{align*}
& \hat \rho(t,v) = \left\{
\begin{aligned}
\rho(t-1,v),& \quad t\in [1,T+1], \\
\hat U(t,v),& \quad t \in [0,1).
\end{aligned} \right. \\
& \hat D_{ij}(t,v) = \left\{
\begin{aligned}
D_{ij}(\rho(t-1,v)),& \quad t\in [1,T+1], \\
\de_{ij},& \quad t \in [0,1),
\end{aligned} \right.  \\
& \hat g(t,v) = \left\{
\begin{aligned}
K f(\rho(t-1,v)),& \quad t\in [1,T+1], \\
2\hat h(t,v),& \quad t \in [0,1),
\end{aligned} \right.
\end{align*}
where $\rho(t,v)\equiv \rho_K(t,v)$ is the solution  of  \eqref{1.6}
with initial value $\rho_0$.  Then, these functions satisfy
$$
\partial_t \hat \rho = \sum_{i,j=1}^d 
\partial_{v_i}(\hat D_{ij}(t,v) \partial_{v_i}\hat \rho) + \hat g(t,v),
$$
with $\hat \rho(0,v) = U(1)$.  Note that $\|U(1)\|_\infty \le \|\rho_0\|_\infty$.
So, noting that $\|\hat g\|_\infty \le K\|f\|_\infty+ 2 \|\De\rho_0\|_\infty$ 
and $\{\hat D_{ij}\}$ satisfies the uniform positive-definiteness and boundedness
conditions, from \eqref{7.H-Q} with $T$ replaced by $T+1$, we have
\begin{align*}
|\hat\rho(t_1, v_1) - \hat\rho(t_2, v_2)| \le C (\|\rho_0\|_\infty + \| \hat g\|_\infty)
 \Big( \frac{|t_1-t_2|^{1/2} +
 |v_1-v_2|}{(t_1\wedge t_2)^{1/2}} \Big)^\si,
\end{align*}
for $t_1, t_2 \in (0,T+1], \; v_1, v_2 \in \T^d$.
The desired bound \eqref{7.1} is obtained by restricting the above estimate 
to $[1,T+1]$ and shifting it by $1$ in time.
\end{proof}

Now we apply Theorem 4.8 of \cite{Li96}, in particular the detailed estimate given 
on p.\ \!59 for a solution $u$ of the linear equation (4.22) of \cite{Li96} cited below.
Before explaining it, let us first recall several H\"older norms for functions 
on $\Om =[0,T]\times \T^d$
from  pp.\ \!46--47 of \cite{Li96}.  The sup-norm on $\Om$ is denoted by
$\|\cdot\|_\infty = |\cdot|_0\equiv |\cdot|_0^{(0)}$ and let $\a\in (0,1]$. 
The (semi) norms $[u]_\a^{(b)}, b=0,1, |u|_\a^{(1)}$ and 
$|u|_{1+\a}^*\equiv |u|_{1+\a}^{(0)}$ 
are defined for functions $u$ on $\Om$ which may have a singularity like $t^{-1/2}$
or others near $t=0$, respectively, by
\begin{align*}
& [u]_\a^{(b)}= \sup_{X\not= Y} (t\wedge s)^{(\a+b)/2} \frac{|u(X)- u(Y)|}
{|X-Y|^\a}, \quad b=0,1, \\
& |u|_\a^{(1)} = |u|_0^{(1)}+ [u]_\a^{(1)},   \quad |u|_0^{(1)} = \sup_\Om t^{1/2} |u(X)|, \\
& |u|_{1+\a}^{(0)} 
 := \| u\|_\infty + |\partial_v u|_0^{(1)} + [u]_{1+\a}^{(0)}+ \lan u \ran_{1+\a}^{(0)} \\
& \hskip 11.4mm
\equiv  \| u\|_\infty + \sup_\Om t^{1/2} |\partial_v u(X)|
+ \sup_{X\not= Y} (t\wedge s)^{(1+\a)/2} \frac{|\partial_v u(X)-  \partial_v u(Y)|}
{|X-Y|^\a} \\
& \hskip 20mm
+ \sup_{X\not= Y, x=y} (t\wedge s)^{(1+\a)/2} \frac{|u(X)- u(Y)|}
{|X-Y|^{1+\a}},
\end{align*}
where $X=(t,v),Y=(s,v') \in \Om$ and $|X-Y| = \max\{ |x-y|, |t-s|^{1/2}\}$.
Note that $d(X,Y) = \min\{d(X),d(Y)\}= \min\{t^{1/2},s^{1/2}\}= (t\wedge s)^{1/2}$
and $\lan u\ran_\a^{(1)}=0$ in the norm $|u|_\a^{(1)}$ on p.\ \!47 of \cite{Li96}.
The norm $|u|_{1+\a}$ for functions $u$ without singularity near $t=0$ is defined 
on p.\ \!46 of \cite{Li96} as 
\begin{align*}  
|u|_{1+\a}
 := \| u\|_\infty + \|\partial_v u\|_\infty
+ \sup_{X\not= Y} \frac{|\partial_v u(X)-  \partial_v u(Y)|}
{|X-Y|^\a} 
+ \sup_{X\not= Y, x=y} \frac{|u(X)- u(Y)|}{|X-Y|^{1+\a}}.
\end{align*}

The detailed estimate mentioned above 
clarifies the dependence of the constants in the estimate on 
various data; see \eqref{7.2} below.  The equation (4.22) of \cite{Li96} is defined 
as follows on $\Om$ and written by dropping the sums in $i, j$ as
\begin{align}  \tag{4.22}
- \partial_t u + \partial_{v_i} \big( a^{ij} \partial_{v_j}u + b^i  \partial_{v_i} u \big)
+ c^i  \partial_{v_i} u +c^0 u =  \partial_{v_i}  f^i+g,
\end{align}
where $a^{ij}, b^i, f^i$ are $\a$-H\"older continuous and $c^i, c^0, g$ are
bounded measurable functions on $\Om$ that satisfy
\begin{align}  \tag{4.20a}
& \la |\xi|^2\le a^{ij}\xi_i\xi_j \le \La |\xi|^2,
\quad [a^{ij}]_\a^{(0)} \le A,\\
& |b^i|_\a^{(1)} \le B,   \tag{4.20b}  \\
& \| c^i\|_\infty \le c_1, \quad \| c^0\|_\infty \le c_2,  \tag{4.20c}  \\
& | f^i|_\a^{(1)} \le F, \quad \| g\|_\infty \le G,  \tag{4.21}
\end{align}
for some positive constants $\la, \La, A, B, c_1, c_2, F$ and $G$. 
Then, we have
\begin{align}  \label{7.2}
|u|_{1+\a}^* \; \big( \equiv |u|_{1+\a}^{(0)} \big)
\le C(d,\a,\la,\La) (M \|u\|_\infty + F+G)
\end{align}
with
\begin{align}  \label{7.3}
M=[1+A+c_1]^{(1+\a)/\a} + \sum_i (|b^i|_0^{(1)})^{1+\a} + c_2 + \sum_i [b^i]_\a^{(1)}.
\end{align}
In our applications, we will take $b^i=c^i=0$.
Note that the above conditions (4.20c) and (4.21) are
simplifications of those in \cite{Li96} by noting that $\|\cdot\|_{p,\de}^{(b)} \le 
C \|\cdot\|_\infty$ for $b=1,2$, but these will suffice for our purpose.

First, we consider \eqref{1.6} to be a linear equation (4.22) for $u=\rho$
with $a^{ij}= D_{ij}(\rho(t,v))$,  $g=-Kf(\rho(t,v))$ where $\rho(t,v)$ is already given
and  $b^i=c^i=c^0=f^i=0$.   Then, since \eqref{7.1} implies
$[a^{ij}]_\si^{(0)} \le CK$ (note that $(t\wedge s)^{\a/2} \le T^{\a/2}$ is bounded), 
one can take $\a=\si$, $A=CK$  in 
the estimate (4.20a) and $G=CK$ in (4.21).  So, noting $M=[1+A]^{(1+\si)/\si}
\le CK^{(1+\si)/\si}$ and $\| u\|_\infty \le 1$, we obtain by \eqref{7.2} the estimate:
\begin{align}  \label{7.4}
|\rho|_{1+\si}^* \le CK^{(1+\si)/\si},
\end{align}
for the solution $\rho$ of \eqref{1.6}.  Here and in the following, we use the
comparison theorem for \eqref{1.6}, that is, under our assumption
\eqref{eq:1.12} for the initial value $\rho_0$, we have
$0<c\le \rho(t,v) \le 1-c <1$ for some $c\in (0,1/2)$;
see Lemma \ref{lem:max} below.

Considering the extended system in time similar to the proof of 
Lemma \ref{lem:7.2} (see also Theorem 4.2 of \cite{FS}), the regularity 
of the initial value $\rho_0(v)$ improves  the estimate \eqref{7.4} into that
without singularity near $t=0$.  In fact, we have the following lemma.

\begin{lem}  \label{lem:6.3}
For the norm without singularity, we have
\begin{align}  \label{7.4-BQ}
|\rho|_{1+\si} \le CK^{(1+\si)/\si}.
\end{align}
\end{lem}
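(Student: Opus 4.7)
My plan is to follow the hint in the paper and realize the ``time-extension trick'' already used in the proof of Lemma~\ref{lem:7.2}, but adapted so that it delivers Schauder regularity (rather than merely Nash--H\"older regularity) on the extended cylinder. The underlying idea is simple: the weighted norm $|\cdot|_{1+\si}^{(0)}$ degenerates only through the factors $(t\wedge s)^{\b/2}$ which vanish at $t=0$; if I solve the equation on a larger time interval $[-1,T]$ (or equivalently, on $[0,T+1]$ after shifting) in such a way that the extension still satisfies a nice linear parabolic equation, then on the physically relevant subinterval these weights are bounded below by a constant, and the weighted bound \eqref{7.4} immediately upgrades to the unweighted bound \eqref{7.4-BQ}.

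Concretely, I would first solve the auxiliary frozen-coefficient equation
\[
\partial_s U = \sum_{i,j=1}^d \partial_{v_i}\bigl(D_{ij}(\rho_0(v))\,\partial_{v_j} U\bigr), \qquad s\in(0,1],
\]
with $U(0,v)=\rho_0(v)$, and set $\hat U(t,v):=U(1-t,v)$, $\hat h(t,v):=-\sum_{i,j}\partial_{v_i}\bigl(D_{ij}(\rho_0)\,\partial_{v_j}\hat U\bigr)$, so that $\hat U$ satisfies $\partial_t\hat U=\sum_{i,j}\partial_{v_i}\bigl(D_{ij}(\rho_0)\,\partial_{v_j}\hat U\bigr)+2\hat h$ on $[0,1]$. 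Since $\rho_0\in C^5(\T^d)$ and the coefficients are smooth and elliptic, $U$, hence $\hat U$ and $\hat h$, are uniformly bounded with bounds independent of $K$. Then define on $[0,T+1]\times\T^d$
\[
\hat\rho(t,v)=\begin{cases}\hat U(t,v),& t\in[0,1),\\ \rho(t-1,v),& t\in[1,T+1],\end{cases}
\quad
\hat D_{ij}(t,v)=\begin{cases} D_{ij}(\rho_0(v)),& t\in[0,1],\\ D_{ij}(\rho(t-1,v)),& t\in[1,T+1],\end{cases}
\]
and analogously set $\hat g(t,v)=2\hat h(t,v)$ on $[0,1)$ and $\hat g(t,v)=Kf(\rho(t-1,v))$ on $[1,T+1]$. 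Both $\hat\rho$ and $\hat D_{ij}$ are continuous at $t=1$ because $\rho(0,\cdot)=\rho_0$, and $\hat\rho$ satisfies the linear divergence-form equation $\partial_t\hat\rho=\sum_{i,j}\partial_{v_i}(\hat D_{ij}\partial_{v_j}\hat\rho)+\hat g$ on $[0,T+1]$.

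To apply the weighted Schauder estimate \eqref{7.2}, I would verify (4.20a) and (4.21) for $\hat D_{ij}$ and $\hat g$. On $[0,1]\times\T^d$, $\hat D_{ij}$ is constant in $t$ and smooth in $v$; on $[1,T+1]\times\T^d$, Lemma~\ref{lem:7.2} gives $[\hat D_{ij}]_\si\le CK$; and across $t=1$, for $t_1<1<t_2$ we have $|\hat D_{ij}(t_1,v)-\hat D_{ij}(t_2,v)|=|D_{ij}(\rho_0(v))-D_{ij}(\rho(t_2-1,v))|\le CK|t_2-1|^{\si/2}\le CK|t_2-t_1|^{\si/2}$, again by Lemma~\ref{lem:7.2}. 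Hence $A:=[\hat D_{ij}]_\si^{(0)}\le CK$ on $[0,T+1]$, while $G:=\|\hat g\|_\infty\le CK$ and $c_1=c_2=F=0$. Consequently $M\le C K^{(1+\si)/\si}$ and \eqref{7.2} yields $|\hat\rho|_{1+\si}^{(0)}\le C(M\|\hat\rho\|_\infty+G)\le CK^{(1+\si)/\si}$. Restricting to $t\in[1,T+1]$, every weight $(t\wedge s)^{\b/2}$ with $\b\in\{0,1,1+\si\}$ is bounded below by $1$, so the weighted seminorms dominate the unweighted ones, and shifting back by one unit of time yields $|\rho|_{1+\si}\le CK^{(1+\si)/\si}$, which is \eqref{7.4-BQ}.

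The part I expect to require the most care is the H\"older bookkeeping of $\hat D_{ij}$ across the gluing time $t=1$: one must use the matching $\hat D_{ij}(1^-,v)=D_{ij}(\rho_0(v))=\hat D_{ij}(1^+,v)$ together with the parabolic H\"older estimate of Lemma~\ref{lem:7.2} to produce a \emph{single} $\si$-H\"older constant of size $O(K)$ on the full extended cylinder. Everything else is routine: the uniform ellipticity of $\hat D_{ij}$ inherits from $D$ via \eqref{eq:1.9}, and the boundedness of $\hat g$ inherits from the smoothness of the auxiliary function $\hat U$ on $[0,1]$ (which is $K$-independent) together with $\|Kf(\rho)\|_\infty\le CK$ on $[1,T+1]$.
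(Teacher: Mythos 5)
Your proposal reproduces the paper's own proof: you introduce the same auxiliary frozen-coefficient equation $\partial_s U = L_{\rho_0}U$, time-reverse to get $\hat U$ and $\hat h$, glue to form the extended solution $\hat\rho$ with $K$-independent data on $[0,1]$, verify the hypotheses of the linear Schauder estimate \eqref{7.2} on the extended cylinder, and restrict to $[1,T+1]$ where the weights are bounded below by $1$. The only divergence is cosmetic (a sign convention on $\hat g$, which does not affect the $L^\infty$ bound), and your explicit cross-time H\"older bookkeeping at $t=1$ is a useful supplement to a step the paper leaves implicit.
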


\begin{proof}
Since we need to extend $a^{ij}$ to $\hat a^{ij}$ while preserving its $\si$-H\"older
property, we modify the choice of $\hat D_{ij}$ in the proof of Lemma \ref{lem:7.2}.
We take $\hat b^i=\hat c^i = \hat c^0 = \hat f^i=0$ and
define $\hat u(t,v)$, $\{\hat a^{ij}(t,v)\}$ and $\hat g(t,v)$, respectively, by
\begin{align}  \notag
& \hat u(t,v) = \left\{
\begin{aligned}
\rho(t-1,v),& \quad t\in [1,T+1], \\
\hat U(t,v),& \quad t \in [0,1).
\end{aligned} \right. \\   \label{eq:7.10-P}
& \hat a^{ij}(t,v) = \left\{
\begin{aligned}
D_{ij}(\rho(t-1,v)),& \quad t\in [1,T+1], \\
D_{ij}(\rho_0(v)),& \quad t \in [0,1),
\end{aligned} \right.  \\   \notag
& \hat g(t,v) = \left\{
\begin{aligned}
-K f(\rho(t-1,v)),& \quad t\in [1,T+1], \\
- 2\hat h(t,v),& \quad t \in [0,1),
\end{aligned} \right.
\end{align}
where $\hat U(t):= U(1-t)$ is defined from the solution $U(s), s\in [0,1]$
of the equation 
$$
\partial_s U = L_{\rho_0}U \equiv
\partial_{v_i}(D_{ij}(\rho_0(v))\partial_{v_j}U), \quad s\in (0,1],
$$
with
initial value $U(0)=\rho_0$, and $\hat h(t,v) := - L_{\rho_0} \hat U(t,v),
t\in [0,1]$.  Then, $\hat U(t)$ satisfies the equation
$$
\partial_t \hat U = -L_{\rho_0}\hat U
= L_{\rho_0}\hat U +2 \hat h.
$$
For $\hat g$, the measurability is sufficient. 

Note that $h(t) := \hat h(1-t) = - L_{\rho_0} U(t)$ satisfies the equation
$\partial_t h(t) = L_{\rho_0} h(t)$ and $h(0)=\hat h(1)= - L_{\rho_0} U(0)
=- L_{\rho_0} \rho_0$.
Thus, according to the maximum principle noting 
$$
L_{\rho_0}u= 
D_{ij}(\rho_0)\partial_{v_i}\partial_{v_j}u + D_{ij}'(\rho_0)\partial_{v_i}\rho_0
\cdot \partial_{v_j} u,
$$
we see 
$$
\|h(t)\|_\infty \le \|L_{\rho_0}\rho_0\|_\infty
\le C(\|\partial^2\rho_0\|_\infty + \|\partial\rho_0\|_\infty^2) < \infty,
$$
since $\rho_0\in C^2(\T^d)$ by our condition \eqref{eq:1.12}.
So we have $\|\hat g\|_\infty \le CK$.

Since $[\hat a^{ij}]_\si^{(0)} \le CK$,
\eqref{7.4} holds for the extended solution $\hat{u}(t,v)$ on
$[0,T+1]$.  Restricting this estimate to $[1,T+1]$,
\eqref{7.4-BQ} follows.
\end{proof}

Second, we take the derivative of \eqref{1.6} in $v_k$ and obtain by dropping
the sum in $i,j$
\begin{align}  \label{7.A}
\partial_t \rho_{v_k} 
& = \partial_{v_i} \Big( \partial_{v_k} \big\{ D_{ij}(\rho)\partial_{v_j}\rho \big\} \Big)
+ K f'(\rho) \rho_{v_k} \\
& = \partial_{v_i} \big\{ D_{ij}(\rho)\partial_{v_j}\rho_{v_k} \big\} 
- \partial_{v_i}  f^{i,(1)}(\rho)  
+ K f'(\rho) \rho_{v_k},          \notag    
\end{align}
where
$$
f^{i,(1)}(\rho) := - \sum_{j=1}^d D_{ij}'(\rho)\rho_{v_k}\rho_{v_j}.
$$
We consider this equation to be a linear equation (4.22) 
for $u=\rho_{v_k}$ with $a^{ij}= D_{ij}(\rho)$,  $b^i=c^i=0$,
$f^i= f^{i,(1)}(\rho)$ (here both $\rho_{v_k} = \rho_{v_k}(t,v)$ and $\rho_{v_j}
= \rho_{v_j}(t,v)$ are considered already given) and
$c^0=K f'(\rho)$.  We may add a bounded function $g$ (actually, $\hat{g}$)
to apply for the proof of the next Lemma \ref{lem:7.4}.
Then, one can take $A=CK$, $F= CK(K^{(1+\si)/\si})^2$ by using
\eqref{7.1} (for H\"older estimate for $D'(\rho)$) and
\eqref{7.4-BQ} (noting that we can drop $t^{1/2}$ and $(t\wedge s)^{(1+\a)/2}$ 
in the norm $|f^i|_\a^{(1)}$, since these are bounded) and $c_2= CK$.  Thus, 
by \eqref{7.2} with $G$ for $g$ (for later use, currently $g=0$),  
we obtain the estimate:
\begin{align}  \label{eq:6.u1+si}
|u|_{1+\si}^* \le C(M \|u\|_\infty+ K^{2(1+\si)/\si+1}+G)
\end{align}
with $M= [1+A]^{(1+\si)/\si} +K$.  Therefore, noting $\|u\|_\infty = 
\|\rho_{v_k}\|_\infty \le C K^{(1+\si)/\si}$ by \eqref{7.4-BQ}
and $G=0$, we have
\begin{align}  \label{7.5}
|\partial \rho|_{1+\si}^* \le CK^{2(1+\si)/\si+1}.
\end{align}

We can apply a similar argument to the proof of Lemma \ref{lem:6.3} 
to remove the singularity near $t=0$.

\begin{lem}  \label{lem:7.4}
For the norm without singularity, we have
\begin{align}  \label{7.5-BQ}
|\partial \rho|_{1+\si} \le CK^{2(1+\si)/\si+1}.
\end{align}
\end{lem}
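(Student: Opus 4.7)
The plan is to repeat the time-extension trick from the proof of Lemma \ref{lem:6.3}, now adapted to each derivative $\rho_{v_k}$ ($k=1,\ldots,d$), in order to reduce \eqref{7.5-BQ} to the starred estimate \eqref{eq:6.u1+si} applied on an extended cylinder $[0,T+1]\times\T^d$. Specifically, I would define $\hat u(t,v) = \rho_{v_k}(t-1,v)$ for $t\in[1,T+1]$ and $\hat u(t,v) = V(1-t,v)$ for $t\in[0,1)$, where $V(s,v)$ solves the auxiliary problem $\partial_s V = L_{\rho_0}V$ on $[0,1]$ with $V(0,v) = \partial_{v_k}\rho_0(v)$ and $L_{\rho_0}w := \partial_{v_i}(D_{ij}(\rho_0(v))\partial_{v_j}w)$. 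Writing $\hat V(t):=V(1-t)$ and $\hat h := -L_{\rho_0}\hat V$, the maximum principle applied to $h(t):=\hat h(1-t)$ (which satisfies $\partial_t h = L_{\rho_0}h$ with $h(0)=-L_{\rho_0}\partial_{v_k}\rho_0$) gives $\|\hat h\|_\infty \le \|L_{\rho_0}\partial_{v_k}\rho_0\|_\infty$, a constant independent of $K$ since $\rho_0 \in C^5(\T^d)$ by \eqref{eq:1.12}.

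On the extended cylinder, $\hat u$ is designed to satisfy a linear equation in the form (4.22) of \cite{Li96} with $\hat b^i = \hat c^i = 0$ and coefficients
\begin{align*}
\hat a^{ij}(t,v) &:= \begin{cases} D_{ij}(\rho(t-1,v)), & t\in[1,T+1],\\ D_{ij}(\rho_0(v)), & t\in[0,1), \end{cases}\\
\hat f^i(t,v) &:= \begin{cases} f^{i,(1)}(\rho(t-1,v)), & t\in[1,T+1],\\ f^{i,(1)}(\rho_0(v)), & t\in[0,1), \end{cases}\\
\hat c^0(t,v) &:= \begin{cases} Kf'(\rho(t-1,v)), & t\in[1,T+1],\\ 0, & t\in[0,1), \end{cases}\\
\hat g(t,v) &:= \begin{cases} 0, & t\in[1,T+1],\\ 2\hat h(t,v) + \partial_{v_i}f^{i,(1)}(\rho_0(v)), & t\in[0,1). \end{cases}
\end{align*}
The crucial design feature is that both $\hat a^{ij}$ and $\hat f^i$ are continuous across the seam $t=1$ because $\rho(0,\cdot)=\rho_0(\cdot)$, while $\hat g$ stays bounded uniformly in $K$ on $[0,1)$ thanks to the smoothness of $\rho_0$.

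Next I would verify the hypotheses (4.20a)--(4.21) of \cite{Li96} on the full cylinder: $\la,\La$ are inherited from the ellipticity of $D$; $A = [\hat a^{ij}]_\si^{(0)} \le CK$ by Lemma \ref{lem:7.2} on each side of the seam together with a cross-seam bound via the same lemma; $\|\hat c^0\|_\infty \le CK$; $\|\hat g\|_\infty \le C$ independent of $K$; and the critical bound $F = |\hat f^i|_\si^{(1)} \le CK^{2(1+\si)/\si+1}$. The latter uses the product form $f^{i,(1)}(\rho) = -\sum_j D_{ij}'(\rho)\rho_{v_k}\rho_{v_j}$, the estimates $\|\partial\rho\|_\infty, [\partial\rho]_\si \le CK^{(1+\si)/\si}$ from \eqref{7.4-BQ}, and $[D_{ij}'(\rho)]_\si \le CK$ from Lemma \ref{lem:7.2}. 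The cross-seam H\"older bound on $\hat f^i$ works precisely because of its continuity at $t=1$: for $X=(t,v)$ with $t<1$ and $Y=(s,v')$ with $s\ge 1$, Lemma \ref{lem:7.2} bounds $|\rho_0(v')-\rho(s-1,v')|$ by $CK|s-1|^{\si/2}$ and \eqref{7.4-BQ} controls the analogous oscillation of $\partial\rho$, while $|X-Y|^\si \ge |s-1|^{\si/2} \vee |v-v'|^\si$ absorbs these increments.

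Finally, applying \eqref{eq:6.u1+si} to $\hat u$ on $[0,T+1]$ with $\|\hat u\|_\infty \le CK^{(1+\si)/\si}$ (from \eqref{7.4-BQ} on $[1,T+1]$ and the maximum principle for $V$ on $[0,1)$) and $M \le CK^{(1+\si)/\si}$ gives $|\hat u|_{1+\si}^* \le CK^{2(1+\si)/\si+1}$. Restricting to $t\in[1,T+1]$, where the weights $t^{1/2}$ and $(t\wedge s)^{(1+\si)/2}$ are all at least $1$ and the starred norm therefore dominates the unweighted one, and translating time by $-1$, yields \eqref{7.5-BQ}. The principal obstacle is the bookkeeping at the seam $t=1$, in particular controlling the weighted H\"older norm $|\hat f^i|_\si^{(1)}$ across it; choosing $\hat f^i(\cdot,v) = f^{i,(1)}(\rho_0(v))$ on $[0,1)$ (rather than zero, as was possible for $\hat f^i$ in the simpler Lemma \ref{lem:6.3} where no such divergence-form forcing appears in the linearized PDE) to preserve continuity at $t=1$ is exactly what allows the cross-seam estimate to be controlled by a polynomial in $K$ instead of blowing up.
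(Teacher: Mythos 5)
Your proof is correct and follows essentially the same route as the paper: extend the solution backwards in time to $[0,T+1]\times\T^d$, keep $\hat a^{ij}$, $\hat f^i$ and $\hat c^0$ continuous across the seam $t=1$ by evaluating everything at $\rho_0=\rho(0,\cdot)$ on $[0,1)$, and place the $K$-independent remainder into $\hat g$ so that \eqref{eq:6.u1+si} applies with $G\le C$. The one genuine (minor) variation is in the auxiliary problem for $V$: the paper lets $V$ solve $\partial_s V=L_{\rho_0}V-\partial_{v_i}f^{i,(1)}(\rho_0)$ and then bounds $\hat h$ via Duhamel's formula, whereas you take the homogeneous $\partial_s V=L_{\rho_0}V$ and absorb $\partial_{v_i}f^{i,(1)}(\rho_0)$ directly into $\hat g$, needing only the maximum principle. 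This is a slight simplification and costs nothing; the only blemish is a sign slip in your expression for $\hat g$ (with your homogeneous $V$ the identification in (4.22) forces $\hat g=-2\hat h-\partial_{v_i}f^{i,(1)}(\rho_0)$, not $+2\hat h+\partial_{v_i}f^{i,(1)}(\rho_0)$), which does not affect the boundedness of $\hat g$ or the final estimate.
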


\begin{proof}
We take $\{\hat a^{ij}(t,v)\}$ as in \eqref{eq:7.10-P} and $\hat b^i = \hat c^i=0$.
Then, for each $k$, we define $\hat u(t,v)$, $\{\hat f^i(t,v)\}$, $\hat c^0(t,v)$
and $\hat g(t,v)$, respectively, by
\begin{align}  \notag
& \hat u(t,v) = \left\{
\begin{aligned}
\rho_{v_k}(t-1,v),& \quad t\in [1,T+1], \\
\hat V(t,v),& \quad t \in [0,1).
\end{aligned} \right. \\  \label{eq:7.14-P}
& \hat f^i(t,v) = \left\{
\begin{aligned}
f^{i,(1)}(\rho(t-1,v)),
& \quad t\in [1,T+1], \\
f^{i,(1)}(\rho_0(v)),& \quad t \in [0,1),
\end{aligned} \right.  \\   \notag
& \hat c^0(t,v) = \left\{
\begin{aligned}
K f'(\rho(t-1,v)),& \quad t\in [1,T+1], \\
0,& \quad t \in [0,1),
\end{aligned} \right. \\   \notag
& \hat g(t,v) = \left\{
\begin{aligned}
0,& \quad t\in [1,T+1], \\
-2\hat h(t,v),& \quad t \in [0,1),
\end{aligned} \right.
\end{align}
where
$\hat V(t) \equiv \hat V_k(t):= V(1-t)$ is defined from the solution $V(s), s\in [0,1]$
of the equation 
$$
\partial_s V = L_{\rho_0}V - \partial_{v_i}f^{i,(1)}(\rho_0), \quad s\in (0,1],
$$
with
initial value $V(0)=\partial_{v_k}\rho_0$, and $\hat h(t,v) := - L_{\rho_0} \hat V(t,v),
t\in [0,1]$, where $L_{\rho_0}$ is the same as in the proof of Lemma \ref{lem:6.3}.
Then, $\hat V(t)$ satisfies the equation
$$
\partial_t \hat V = - L_{\rho_0} \hat V + \partial_{v_i}f^{i,(1)}(\rho_0)
= L_{\rho_0} \hat V + 2 \hat h + \partial_{v_i} f^{i,(1)}(\rho_0).
$$

Note that 
$h(t) := \hat h(1-t) = - L_{\rho_0} V(t)$ satisfies the equation
$$
\partial_t h(t) = L_{\rho_0} h(t)+L_{\rho_0}\partial_{v_i}f^{i,(1)}(\rho_0)
$$
and $h(0)=- L_{\rho_0} \partial_{v_k}\rho_0$.
Thus, by applying Duhamel's formula
$$
h(t) = e^{tL_{\rho_0}} h(0) + \int_0^t
e^{(t-s)L_{\rho_0}} L_{\rho_0}\partial_{v_i}f^{i,(1)}(\rho_0)ds
$$
with the semigroup $e^{tL_{\rho_0}}$ generated by $L_{\rho_0}$, which
is a contraction under the sup-norm by the maximum principle, 
we see 
$$
\|h(t)\|_\infty \le 
\|L_{\rho_0}\partial_{v_k}\rho_0\|_\infty + \|L_{\rho_0}\partial_{v_i}f^{i,(1)}(\rho_0)\|_\infty< \infty, \quad t\in [0,1],
$$ 
since $\rho_0\in C^4(\T^d)$ by the condition \eqref{eq:1.12}.
So $G=\|\hat g\|_\infty \le C$.

This shows \eqref{7.5-BQ} by applying \eqref{eq:6.u1+si} for the extended solution
defined on $[0,T+1]$.
\end{proof}

These two lemmas also imply an estimate for $\partial_t\rho$.
Indeed, expanding the right-hand side of \eqref{1.6} by noting $D_{ij}(\rho)
\in C^\infty([0,1])$, and then applying \eqref{7.4-BQ} and \eqref{7.5-BQ},
we obtain
\begin{align}  \label{7.6-A}
\|\partial_t \rho\|_\infty \le CK^{2(1+\si)/\si+1}.
\end{align}

Third, we take the derivative of \eqref{7.A} in $v_\ell$ and obtain
\begin{align}  \label{7.B}
\partial_t \rho_{v_k v_\ell} 
& = \partial_{v_i} \big\{ D_{ij}(\rho)\partial_{v_j}\rho_{v_k v_\ell} \big\} 
- \partial_{v_i}  f^{i,(2)}(\rho)
+ K f'(\rho) \rho_{v_k v_\ell} + K f''(\rho) \rho_{v_k} \rho_{v_\ell}, 
\end{align}
where
$$
f^{i,(2)}(\rho) = -\sum_{j=1}^d \big\{ D_{ij}'(\rho) \rho_{v_\ell} \rho_{v_j v_k}
+ D_{ij}''(\rho) \rho_{v_\ell} \rho_{v_k}\rho_{v_j}
+ D_{ij}'(\rho) \partial_{v_\ell} (\rho_{v_k} \rho_{v_j})\big\}.
$$
We consider this equation to be a linear equation (4.22) for $u=\rho_{v_k v_\ell}$ 
with $a^{ij}= D_{ij}(\rho)$,  $b^i=c^i=0$, $f^i= f^{i,(2)}$
(regarding all terms in $f^{i,(2)}$ already given), $c^0=K f'(\rho)$ and $g=
- K f''(\rho) \rho_{v_k} \rho_{v_\ell}$ (with all terms already given).  
Then, one can take $A=CK$, $F= CK^2 (K^{(1+\si)/\si})^3$ by using
\eqref{7.1}, \eqref{7.4-BQ} and \eqref{7.5-BQ},  $c_2= CK$ and 
$G= CK \cdot K^{2(1+\si)/\si}$.  
Thus, by \eqref{7.2},  we obtain the estimate:
\begin{align*}
|u|_{1+\si}^* \le C(M \|u\|_\infty+ K^{3(1+\si)/\si+2})
\end{align*}
with $M= [1+A]^{(1+\si)/\si} +K$.  Therefore, noting
$\|u\|_\infty = \|\rho_{v_k v_\ell}\|_\infty\le C K^{2(1+\si)/\si+1}$
by \eqref{7.5-BQ}, we have
\begin{align}  \label{7.5-C}
|\partial^2 \rho|_{1+\si}^* \le CK^{3(1+\si)/\si+2}.
\end{align}

We can again use the same argument as above to remove the singularity near
$t=0$.

\begin{lem}
For the norm without singularity, we have
\begin{align}  \label{7.5-CBQ}
|\partial^2 \rho|_{1+\si} \le CK^{3(1+\si)/\si+2}.
\end{align}
\end{lem}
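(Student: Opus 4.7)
The plan is to mimic the time-extension trick used in the proofs of Lemmas \ref{lem:6.3} and \ref{lem:7.4}, which converts the starred-norm estimate \eqref{7.5-C} into the non-singular estimate \eqref{7.5-CBQ} by extending the parabolic cylinder to $[0,T+1]\times\T^d$ and restricting the resulting bound to $[1,T+1]\times\T^d$, where the weights $(t\wedge s)^{(\a+b)/2}$ appearing in $|\cdot|_{1+\si}^*$ are bounded below by a constant. For each fixed $k,\ell$, I will set $\hat u(t,v) = \rho_{v_k v_\ell}(t-1,v)$ on $[1,T+1]$ and $\hat u(t,v) = \hat W(t,v) := W(1-t,v)$ on $[0,1)$, where $W(s,v)$ solves a linear parabolic equation on $s\in[0,1]$ driven by the frozen-coefficient operator $L_{\rho_0}u := \partial_{v_i}(D_{ij}(\rho_0(v))\partial_{v_j}u)$ with initial value $W(0)=\partial_{v_k}\partial_{v_\ell}\rho_0$; the forcing in the equation for $W$ is chosen so that $\hat u$ satisfies, across the junction $t=1$, a single linear equation of the form (4.22) of \cite{Li96}.

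The coefficient extensions $\hat a^{ij}, \hat c^0, \hat f^i$ are defined by pasting exactly as in \eqref{eq:7.10-P} and \eqref{eq:7.14-P}: on $[0,1)$ one takes $\hat a^{ij}(t,v)=D_{ij}(\rho_0(v))$, $\hat c^0(t,v)=0$, and $\hat f^i = f^{i,(2)}(\rho_0)$, while on $[1,T+1]$ one takes their $\rho(t-1,v)$-versions. The forcing $\hat g$ incorporates the zeroth-order cross term $-Kf''(\rho)\rho_{v_k}\rho_{v_\ell}$ on $[1,T+1]$ and a term $-2\hat h(t,v)$ on $[0,1)$, where $\hat h(t) := -L_{\rho_0}\hat W(t)$. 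Applying Duhamel's formula and the maximum principle to $h(t) := \hat h(1-t)$, which satisfies a linear equation of heat type for $L_{\rho_0}$ with a bounded forcing, reduces the control of $\|\hat g\|_\infty$ to bounding $\|L_{\rho_0}\partial_{v_k}\partial_{v_\ell}\rho_0\|_\infty$ together with $\|L_{\rho_0}\partial_{v_i}f^{i,(2)}(\rho_0)\|_\infty$ and the quadratic cross term evaluated at $\rho_0$.

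The main obstacle, and precisely the point where the hypothesis $\rho_0\in C^5(\T^d)$ in \eqref{eq:1.12} is used, is this last boundedness check: $L_{\rho_0}\partial_{v_i}f^{i,(2)}(\rho_0)$ contains products of derivatives of $\rho_0$ of orders up to five, so $C^5$-regularity is just enough to conclude $\|\hat g\|_\infty\le C$ independently of $K$. Once this is in hand, the coefficient bounds $[\hat a^{ij}]_\si^{(0)}\le CK$ from Lemma \ref{lem:7.2}, $\|\hat c^0\|_\infty\le CK$, $|\hat f^i|_\si^{(1)}\le CK^2(K^{(1+\si)/\si})^3$ via \eqref{7.1}, \eqref{7.4-BQ} and \eqref{7.5-BQ}, as well as $\|\hat u\|_\infty\le CK^{2(1+\si)/\si+1}$ via \eqref{7.5-BQ}, all carry over unchanged. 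Applying the Lieberman estimate \eqref{7.2} to $\hat u$ on $[0,T+1]$ then gives $|\hat u|_{1+\si}^*\le CK^{3(1+\si)/\si+2}$ on the whole extended cylinder; restricting to $[1,T+1]$ collapses the starred weights to constants and shifting back by $1$ in time yields \eqref{7.5-CBQ}.
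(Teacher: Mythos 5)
Your proposal reproduces the paper's proof essentially verbatim: the same time-extension to $[0,T+1]$ with coefficients frozen at $\rho_0$ on $[0,1)$, the same auxiliary backward solution $\hat V=V(1-\cdot)$ (you call it $\hat W$) with initial data $\partial_{v_k}\partial_{v_\ell}\rho_0$ and forcing chosen to match the Lieberman form (4.22) across the junction, the same Duhamel/maximum-principle control of $\|\hat g\|_\infty$ requiring $\rho_0\in C^5$ to bound $\|L_{\rho_0}\partial_{v_i}f^{i,(2)}(\rho_0)\|_\infty$, and the same final step of applying \eqref{7.5-C} to the extended solution and restricting to $[1,T+1]$. The argument is correct and takes the same approach as the paper.
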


\begin{proof}
Take $\{\hat a^{ij}(t,v)\}$ as in \eqref{eq:7.10-P}, $\hat c^0(t,v)$
as in \eqref{eq:7.14-P} and $\hat b^i=\hat c^i=0$.  Then, for each $k, \ell$,
we define $\hat u(t,v)$, $\{\hat f^i(t,v)\}$ and $\hat g(t,v)$, respectively, by
\begin{align}  \notag
& \hat u(t,v) = \left\{
\begin{aligned}
\rho_{v_k v_\ell}(t-1,v),& \quad t\in [1,T+1], \\
\hat V(t,v),& \quad t \in [0,1).
\end{aligned} \right. \\  \label{eq:7.18-P}
& \hat f^i(t,v) = \left\{
\begin{aligned}
f^{i,(2)}(\rho(t-1,v)),
& \quad t\in [1,T+1], \\
f^{i,(2)}(\rho_0(v)),
& \quad t \in [0,1),
\end{aligned} \right.  \\  \notag
& \hat g(t,v) = \left\{
\begin{aligned}
K f''(\rho(t-1,v))\rho_{v_k}(t-1,v)\rho_{v_\ell}(t-1,v),
& \quad t\in [1,T+1], \\
-2\hat h(t,v),& \quad t \in [0,1),
\end{aligned} \right.
\end{align}
where
$\hat V(t) \equiv \hat V_{k,\ell}(t):= V(1-t)$ is defined from the solution 
$V(s), s\in [0,1]$ of the equation 
$$
\partial_s V = L_{\rho_0}V -
\partial_{v_i}f^{i,(2)}(\rho_0), \quad s\in (0,1],
$$ 
with
initial value $V(0)=\partial_{v_k}\partial_{v_\ell} \rho_0$, and 
$\hat h(t,v) := - L_{\rho_0} \hat V(t,v), t\in [0,1]$.
Then, $\hat V(t)$ satisfies the equation
$$
\partial_t \hat V = - L_{\rho_0} \hat V + \partial_{v_i}  f^{i,(2)}(\rho_0)
= L_{\rho_0} \hat V + 2 \hat h + \partial_{v_i} f^{i,(2)}(\rho_0).
$$

Note that 
$h(t) := \hat h(1-t) = - L_{\rho_0} V(t,v)$ satisfies the equation
$$
\partial_t h(t) = L_{\rho_0} h(t) +L_{\rho_0}\partial_{v_i}f^{i,(2)}(\rho_0)
$$ 
and $h(0)=  - L_{\rho_0} \partial_{v_k}\partial_{v_\ell}\rho_0$.
Thus, by Duhamel's formula similarly as before, we see 
$$
\|h(t)\|_\infty \le 
\|L_{\rho_0}\partial_{v_k}\partial_{v_\ell}\rho_0\|_\infty 
+ \|L_{\rho_0}\partial_{v_i}f^{i,(2)}(\rho_0)\|_\infty < \infty,
$$
since $\rho_0\in C^5(\T^d)$ by the condition \eqref{eq:1.12}.
Thus, $\| \hat g\|_\infty$ has the same bound by $G$ stated above.
This shows \eqref{7.5-CBQ} by applying \eqref{7.5-C} for the 
extended solution.
\end{proof}

Finally for $\partial_{v_k}\partial_t \rho$, using \eqref{7.A}, expanding the
terms on the right-hand side and recalling
$D(\rho), f(\rho) \in C^\infty([0,1])$, we obtain from \eqref{7.4-BQ},
\eqref{7.5-BQ} and \eqref{7.5-CBQ}
\begin{align}  \label{7.6}
\|\partial_{v_k}\partial_t \rho\|_\infty \le CK^{3(1+\si)/\si+2}.
\end{align}
The estimate \eqref{eq:Schauder} follows from
\eqref{7.4-BQ}, \eqref{7.5-BQ}, \eqref{7.6-A}, \eqref{7.5-CBQ}, and \eqref{7.6}.

\section {Comparison theorem}  \label{sec:8}

The following lemma is standard and follows directly from the maximum principle.
Recall that $f(0)>0$ and $f(1)<0$ for $f$ in the PDE \eqref{1.6} and $K>0$.

\begin{lem}  \label{lem:max}
For the solution $\rho(t,v)$ of \eqref{1.6}, if $0<\rho_- \le \rho_0(v)\le \rho_+ <1$,
then we have $\rho_- \wedge \a_-\le \rho(t,v)\le \rho_+\vee \a_+ $ for
all $t\ge 0$, where $\a_- = \min\{ \a\in [0,1]; f(\a)=0\} >0$ and
$\a_+ = \max\{ \a\in [0,1]; f(\a)=0\}<1$.
\end{lem}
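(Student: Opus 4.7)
The strategy is to exhibit explicit constant super- and sub-solutions and appeal to the parabolic comparison principle. Set $\bar\rho := \rho_+ \vee \a_+$ and $\underline\rho := \rho_- \wedge \a_-$. By hypothesis $\underline\rho \le \rho_0(v) \le \bar\rho$ for every $v \in \T^d$. The key sign checks are: since $\a_+$ is the largest zero of $f$ in $[0,1]$ and $f(1)<0$, continuity forces $f \le 0$ on $[\a_+, 1]$, so $f(\bar\rho) \le 0$; symmetrically, $f \ge 0$ on $[0, \a_-]$ because $\a_-$ is the smallest zero and $f(0)>0$, hence $f(\underline\rho) \ge 0$. On constants the diffusion term in \eqref{1.6} vanishes, so $\bar\rho$ and $\underline\rho$ are a classical super- and sub-solution, respectively.

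Next I would invoke comparison. Writing $w(t,v) := \rho(t,v) - \bar\rho$ and linearizing $f$, one has
\begin{align*}
\partial_t w = \sum_{i,j=1}^d \partial_{v_i}\bigl\{ D_{ij}(\rho(t,v)) \partial_{v_j} w \bigr\} + K c(t,v)\, w + K f(\bar\rho),
\end{align*}
where $c(t,v) := \int_0^1 f'(\bar\rho + s(\rho(t,v)-\bar\rho))\, ds$ is bounded on $[0,T]\times \T^d$ thanks to the smoothness of $f$ and the fact that the classical solution $\rho$ from Section \ref{sec:Schauder} takes values in $[0,1]$. Setting $\tilde w := e^{-\mu t} w$ with $\mu > K\|c\|_\infty$, the function $\tilde w$ satisfies a linear uniformly parabolic inequality on $\T^d$ in divergence form, with a nonnegative zero-order coefficient $(\mu - Kc)$, a nonpositive forcing term $e^{-\mu t} K f(\bar\rho)$, and $\tilde w(0,\cdot) \le 0$. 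The classical weak parabolic maximum principle on the closed manifold $\T^d$ (no boundary contribution) then yields $\tilde w \le 0$, hence $\rho(t,v) \le \bar\rho$ on $[0,T] \times \T^d$. The lower bound $\rho \ge \underline\rho$ is obtained by the symmetric argument applied to $-\underline w := \underline\rho - \rho$, using $f(\underline\rho) \ge 0$.

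The argument is essentially routine, and the only mild obstacle is handling the zero-order term $K f'(\cdot) w$ of indefinite sign; this is dispatched by the exponential shift $\tilde w = e^{-\mu t} w$, which is legitimate because $\|c\|_\infty$ is controlled uniformly in $t \in [0,T]$ by the smoothness of $f$ on $[0,1]$. Since $T>0$ is arbitrary in \eqref{1.6}, the bound propagates globally in time, giving the claim for all $t \ge 0$.
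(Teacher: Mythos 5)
Your proof is correct and reaches the same conclusion, but by a different (more ``textbook'') route than the paper. The paper argues by contradiction with a \emph{first-touch time}: it perturbs the constant barrier to $\rho_+\vee\a_+ + \e$, lets $t_0$ be the first time $\rho$ touches that level, and derives a contradiction from the pointwise inequalities $\partial_t w(t_0,v_0)\le 0$ and $\sum_{i,j}\partial_{v_i}\{D_{ij}\partial_{v_j}w\}(t_0,v_0)\ge 0$ at the touching point, against the strict sign $-Kf(\rho_+\vee\a_+ +\e)>0$. That argument is entirely self-contained and never needs to name a maximum principle, nor to linearize $f$; the $\e$-perturbation is what converts the weak inequality $f\le 0$ on $[\a_+,1]$ into a strict one at the contact point. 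You instead linearize $f$ via the integral mean-value form, introduce the coefficient $c(t,v)=\int_0^1 f'(\bar\rho+s(\rho-\bar\rho))\,ds$, kill the sign-indefinite zero-order term by the exponential shift $\tilde w=e^{-\mu t}w$ with $\mu>K\|c\|_\infty$, and then invoke the weak parabolic maximum principle on $\T^d$ as a black box. Both arguments are sound; yours is cleaner if the maximum principle is available to cite, while the paper's is fully elementary. Two small cautions in your version: the displayed $\mu$ depends a priori on $\|c\|_{L^\infty([0,T]\times\T^d)}$, which depends on $T$ (and, strictly speaking, on knowing the solution is bounded there); you acknowledge this and correctly observe that since $T$ is arbitrary the conclusion propagates in time, so this is a cosmetic issue. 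Also, when asserting $c$ is bounded ``thanks to... the fact that the classical solution takes values in $[0,1]$,'' you are gesturing at the very inclusion being proved; this can be fixed either by noting $f$ is a polynomial so $f'$ is bounded on the compact range of the classical solution, or by a standard continuation/bootstrap argument, and the paper's first-touch proof sidesteps the issue entirely.
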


\begin{proof}
To show the upper bound, set $w(t,v) := \rho_+\vee \a_+ + \e - \rho(t,v)$
for $\e>0$ small enough such that $\rho_+\vee \a_+ + \e <1$.  Noting 
$w(0,v)>0$, assume that there exist $t_0>0$ and $v_0\in \T^d$ such that
$w(t,v)>0$ for $t\in [0,t_0)$ and $w(t_0,v_0)=0$.  Then, we have
\begin{align}  \label{eq:8.1-P}
\partial_t w(t_0,&v_0) - \sum_{i,j=1}^d
  \partial_{v_i} \big\{ D_{ij}(\rho)\partial_{v_j} w\big\}(t_0,v_0)\\
&  =- K f(\rho(t_0,v_0)) = -K f(\rho_+\vee \a_+ + \e)>0.  \notag
\end{align}
However, we see that $\partial_t w(t_0,v_0) \le 0$ and, since $w(t_0,\cdot)$
takes the minimum value $0$ at $v=v_0$, recalling \eqref{1.D} and 
\eqref{eq:1.9} for $D(\rho)$, similar to \cite{E}, p.\ \!345, p.\ \!390, we have
$$
\sum_{i,j=1}^d
  \partial_{v_i} \big\{ D_{ij}(\rho)\partial_{v_j} w\big\}(t_0,v_0)\ge 0.
$$
This contradicts with \eqref{eq:8.1-P} and therefore we have that
$\rho(t,v) < \rho_+\vee \a_+ + \e$ for all $\e>0$ and $t\ge 0$.
Thus the upper bound is shown.  The proof for the lower bound is similar.
\end{proof}

\section{Non-degeneracy and boundedness of diffusion matrix}
\label{sec:9}

This problem was studied in \cite{SY} in more general setting reversible
under the Gibbs measures.  Nevertheless, we give the proof, in particular
that of the lower bound, since it turns out to be
explicit in the setting reversible under the Bernoulli measures.
Recall the diffusion matrix $D(\rho)$ defined by \eqref{1.5}
from $\widehat c(\rho)$ and $\chi(\rho)$ 
determined by \eqref{1.3} and \eqref{1.4}, respectively.

\begin{lem}  \label{lem:1.1}
The bound \eqref{eq:1.9} holds for $D(\rho)$, that is,
\begin{align*}
c_* |\th|^2 \le (\th,D(\rho)\th) \le c^* |\th|^2, \quad \th \in \R^d, \; \rho\in [0,1],
\end{align*}
holds with $c_*, c^*>0$ defined by
$$
0< c_* := \min_{\eta\in\mathcal{X}, x\in \Z^d: |x|=1} c_{0,x}(\eta) 
\le c^* := \max_{\eta\in\mathcal{X}, x\in \Z^d: |x|=1} c_{0,x}(\eta) < \infty.
$$
\end{lem}

\begin{proof}
The upper bound $(\th,D(\rho)\th)\le c^* |\th|^2$ 
is easily obtained by taking $F=0$ in \eqref{1.3}.
Indeed, recalling $D(\rho)= \widehat c(\rho)/2\chi(\rho)$ in \eqref{1.5}
and the variational formula \eqref{1.3} for $\widehat c(\rho)$, we have
\begin{align*}
(\th,D(\rho)\th) 
& \le (\th,\widehat c(\rho;0)\th) /2\chi(\rho)  \\
& = \sum_{|x|=1} \lan c_{0,x}(\th,x(\eta_x-\eta_0))^2\ran_\rho /4\chi(\rho)  \\
& \le c^* \sum_{i=1}^d \th_i^2 \lan (\eta_{e_i}-\eta_0)^2\ran_\rho /2\chi(\rho) 
= c^* |\th|^2.
\end{align*}

To show the lower bound, noting $\pi_{0,x}(\eta_x-\eta_0) = -2  (\eta_x-\eta_0)$
for $x: |x|=1$, we have
\begin{align*}
\Big\lan (\eta_x-\eta_0) \pi_{0,x}\Big(\sum_y \t_y \, \th \cdot F\Big) \Big\ran_\rho
& = -2 \sum_y \lan (\eta_x-\eta_0) \t_y \, \th \cdot F \ran_\rho  \\
& = -2 \sum_y \lan (\eta_{x+y}-\eta_y) \, \th \cdot F \ran_\rho =0.
\end{align*}
Therefore,
\begin{align*}
|\th\cdot x| \lan (\eta_x-\eta_0)^2\ran_\rho
& = \bigg| \Big\lan (\eta_x-\eta_0) \Big( \th \, \cdot x (\eta_x-\eta_0)
  - \pi_{0,x}\big(\sum_y \t_y \,\th \cdot F\big) \Big) \Big\ran_\rho  \bigg| \\
& \le \lan (\eta_x-\eta_0)^2\ran_\rho^{1/2} \;
\Big\lan \frac{c_{0,x}}{c_*}\Big( \th\cdot x (\eta_x-\eta_0)
  - \pi_{0,x}\big(\sum_y \t_y \, \th \cdot F\big) \Big)^2 \Big\ran_\rho^{1/2}.
\end{align*}
Then, taking the square and summing in $x: |x|=1$ in both sides, we obtain
\begin{align*}
(2\chi(\rho))^2 \sum_{|x|=1} |\th\cdot x|^2
\le  \frac{4\chi(\rho)}{c_*} (\th,\widehat c(\rho;F)\th),
\end{align*}
for every $F\in \mathcal{F}_0^d$.  This implies the lower bound 
$(\th,D(\rho)\th)\ge c_* |\th|^2$.
\end{proof}

\section{Balance condition---Examples of rates $\{c_{x,y}\}$ and $\{c_x\}$}
\label{Section9}

Consider the jump rates $c_{x,y}(\eta)$ of the Kawasaki  part satisfying the condition
\begin{equation}  \label{eq:3-1-Q}
c_{x,y}(\eta)= c_{x,y}(\check\eta),
\end{equation}
where $\check\eta$ is the configuration defined from $\eta$ by
$(\check\eta)_x = 1-\eta_x$ for every $x$.
Then, the corresponding diffusion matrix is symmetric in $\rho$ under the
reflection at $1/2$, 
i.e.\ $D(\rho) = D(1-\rho)$
for $\rho\in [0,1]$.  
In fact, under the condition \eqref{eq:3-1-Q}, one can rewrite 
$\left(\th,\widehat c(\rho;F)\th\right)$ in the variational 
formula \eqref{1.3} for $\widehat c(\rho)$ as follows:
\begin{align*}
\left(\th,\widehat c(\rho;F)\th\right) 
& = \frac12   \sum_{|x|=1} \left\lan c_{0,x}(\eta) \bigg(\th,x(\eta_x-\eta_0) - 
    \pi_{0,x}\Big(\sum_{y \in\Z^d} \tau_y F(\eta)\Big)\bigg)^2 \right\ran_\rho  \\
& = \frac12   \sum_{|x|=1} \left\lan c_{0,x}(\check\eta) 
\bigg(\th,x(\check\eta_x-\check\eta_0) - 
    \pi_{0,x}\Big(\sum_{y \in\Z^d} \tau_y \check F(\check\eta) 
    \Big)\bigg)^2 \right\ran_\rho,    
\end{align*}
where $\check F(\eta) := - F(\check\eta)$.
We have used $\eta_x-\eta_0 = -(\check\eta_x-\check\eta_0)$ and 
$F(\eta) = - \check F(\check\eta)$.
However, under the map $\eta\mapsto \check\eta$, the Bernoulli measure
$\nu_\rho$ is transformed to $\nu_{1-\rho}$.  Therefore, we see
$\left(\th,\widehat c(\rho;F)\th\right) =
\left(\th,\widehat c(1-\rho; \check F)\th\right)$.
This shows $\widehat c(\rho)=\widehat c(1-\rho)$ and
$D(\rho) = D(1-\rho)$, since $\chi(\rho) =\rho(1-\rho)$ is symmetric.

Note that the rate satisfying \eqref{eq:3-1-Q} can be constructed 
as $\check c_{x,y}(\eta) = (c_{x,y}(\eta) + c_{x,y}(\check\eta))/2$
from any rate $c_{x,y}(\eta)$ which satisfies the non-degeneracy, locality, spatial
homogeneity and the detailed balance condition; recall (1)--(3) in Section
\ref{Section 1.1}.  The new rate $\check c_{x,y}(\eta)$ satisfies these three
conditions and \eqref{eq:3-1-Q}.

On the other hand, one can take the flip rates $c_x(\eta)$ of the Glauber part in 
such a manner that the corresponding reaction term has the form
\begin{align}  \label{eq:3-2-Q}
f(\rho) = - c(\rho-\rho_-)(\rho-1/2)(\rho-\rho_+), \quad \rho\in [0,1],
\end{align}
for some $c>0$ and $0<\rho_-< 1/2 < \rho_+<1$ such that 
$\rho_++\rho_-=1$; see Example 4.1-(2) of \cite{F18}.
This $f$ is anti-symmetric in $\rho$ under the 
reflection at $1/2$, i.e.\  $f(1-\rho)=-f(\rho)$, $\rho\in [0,1]$.

Therefore, by the change of variable $\rho=1-u$, we have
\begin{align*}
\int_{\rho_-}^{\rho_+} f(\rho) D(\rho) d\rho
& = \int_{\rho_-}^{\rho_+} f(1-u) D(1-u) du \\
& = -\int_{\rho_-}^{\rho_+} f(u) D(u) du.
\end{align*}
Thus the integral should vanish, and the balance condition in Section \ref{sec:1.4}
is satisfied in this 
example, i.e., $\{c_{x,y}\}$ satisfying \eqref{eq:3-1-Q} and $\{c_{x}\}$ 
corresponding to \eqref{eq:3-2-Q}.

One can slightly generalize $f$ and $\{c_x\}$ in this example.
Let us assume that the pair $(D(\rho),f(\rho))$, determined from 
certain rates $\{c_{x,y}\}$ and $\{c_x\}$,
satisfies that $f$ has three zeros $0<\rho_-<\rho_*<\rho_+<1$
with stable $\rho_\pm$ and unstable $\rho_*$, and the balance condition
$\int_{\rho_-}^{\rho_+} f(\rho) D(\rho) d\rho=0$ holds. The example constructed above
satisfies this condition. Then, let us keep $D(\rho)$ and $\{c_{x,y}\}$ as they are,
and try to find the generalization of $f(\rho)$ in the form:
$$
f(\rho;\a) = f(\rho) + \sum_{k=0}^N \a_k \rho^k, \quad \rho\in [0,1],
$$
where  $\a = (\a_k)_{k=0}^N\in \R^{N+1}$ is a parameter.

The goal is to find $\a\in \R^{N+1}$ 
such that the following two conditions are satisfied:
\begin{align}  \label{eq:3-1-A}
& f(\rho_+;\a) =f(\rho_-;\a) = 0,  \\
&\int_{\rho_-}^{\rho_+}  f(\rho;\a) D(\rho) d\rho=0.
\label{eq:3-2-A}
\end{align}

These two conditions can be rewritten as
\begin{align}\label{eq:3-5-A}
& \sum_{k=0}^N \a_k(\rho_+)^k = \sum_{k=0}^N \a_k(\rho_-)^k= 0,\\
& \sum_{k=0}^N \a_k\nu_{ij,k} =0
\label{eq:3-6-A}
\end{align}
for all $1\le i,j \le d$, where
\begin{align*}
 \nu_{ij,k} := \int_{\rho_-}^{\rho_+} \rho^k D_{ij}(\rho)d\rho.
\end{align*}
Since $\nu_{ij;k}$ is symmetric in $i, j$, the equation
\eqref{eq:3-6-A} provides $(d^2+1)/2$ conditions.

The equations \eqref{eq:3-5-A}
plus \eqref{eq:3-6-A} are $n:=(d^2+1)/2+2$ linear 
homogeneous equations for $(N+1)$ variables $\{\a_k\}_{k=0}^{N}$.  
We take $N$  such that $N\ge  n$.
Then, as is well-known, for an $n\times (N+1)$  matrix $A$ of rank $r$,
the solutions of the linear homogeneous equation $A\a = 0$ makes an 
$(N+1)-r$ dimensional linear space.  Since $r\le n$
and $N\ge n$ imply $(N+1)-r\ge 1$, we have at least one
non-trivial solution $\{\a_k\}_{k=0}^{N}$ and one can make
$\|\a\|_\infty = \sup_k |\a_k|$ to be sufficiently small.

If $\|\a\|_\infty$ is sufficiently small, $f(\rho;\a)$ has only three zeros including
$\rho_\pm$ and one can find the Glauber flip rates $\{c_x\}$ which correspond
to this $f(\rho;\a)$.  Indeed, to construct such rates, we may consider a linear 
combination of $\prod_{i=1}^k\eta_{x_i}$, $0\le k \le N-1$ taking different
points $\{x_1,\ldots,x_{N-1},0\}$ in $\Z^d$ as in Example 4.1-(2) of \cite{F18}.

\appendix

\section*{Appendix A \hskip 3mm $r$-Markov property of $\psi_t$}
\label{Section:A}

\renewcommand{\thethm}{A.\arabic{thm}}
\renewcommand{\thelem}{A.\arabic{lem}}

\setcounter{equation}{0}
\renewcommand{\theequation}{A.\arabic{equation}}
\makeatletter
\@addtoreset{equation}{section}
\makeatother

In this appendix, we show the $r$-Markov property of $\psi_t$ (or $P^{\psi_t}$)
defined by \eqref{eq:2psit}.  It was used in the proof of Lemma \ref{Lemma 3.1}
in Section \ref{sec:4.1-C}.  In the next Lemma \ref{lem:3.2-A}, we denote 
$\psi_t$ by $\psi$ for simplicity by omitting $t$ 
and $\frac1N$ in front of the sum involving $F$.
For $\La \subset \T_N^d$ and $r\in \N$, set $\partial_r\La = \{ y \in 
\T_N^d \setminus \La; \text{dist}(y,\La) \le r\}$, $\bar\La = \La \cup 
\partial_r\La$, where $\text{dist}(y,\La) = \min\{|x-y|; x\in \La\}$ is 
defined in the $\ell^\infty$ and modulo $N$-sense, i.e.\ $|x|= \max_{1\le i \le d}
\min_{n\in \Z}|x_i-nN|$ for $x=(x_i)_{i=1}^d \in \T_N^d$.
(We actually take $\La=\{x\}$ in 
\eqref{m-etax} in the proof of Lemma \ref{Lemma 3.1}.)
For $\eta\in \{0,1\}^\La$ and $\om\in \{0,1\}^{\partial_r\La}$,
$\eta\cdot\om \in \{0,1\}^{\bar\La}$ is defined by $(\eta\cdot\om)_x
=\eta_x$ for $x \in \La$ and $=\om_x$ for $x\in \partial_r\La$.  Define
\begin{align*}
& H_{\bar\La}(\eta\cdot\om) := \sum_{y\in \bar\La}\la(y/N) (\eta\cdot\om)_y
+ \sum_{y\in \T_N^d :\, \text{supp}\, \t_y F\subset \bar\La}
(\partial\la(y/N), \t_y F (\eta\cdot\om)), \\
& P_\La^\om(\eta) := Z_\om^{-1} e^{H_{\bar\La}(\eta\cdot\om)},
\quad Z_\om := \sum_{\eta\in \{0,1\}^\La} e^{H_{\bar\La}(\eta\cdot\om)}.
\end{align*}

\begin{lem}  \label{lem:3.2-A}
Let $F \in {\mathcal F}_0^d$ and $r=r(F)$.  Then, $\psi$ has an $r$-Markov
property, that is, for $\La \subset \T_N^d$ and $\eta\in \{0,1\}^\La$,
\begin{equation}  \label{r-Markov}
P^\psi([\eta]|{\mathcal F}_{\La^c})(\om) = P_\La^{\bar\om}(\eta),
\quad P^\psi\rm{\text{-}a.s.}\ \om
\end{equation}
holds, where $[\eta] = \{\zeta\in \{0,1\}^{\T_N^d}; \zeta|_\La = \eta\}$ and
$\bar\om = \om|_{\partial_r\La}$.  Recall $P^\psi = \psi \,d\nu^N$.
\end{lem}

\begin{proof}
The lemma is well-known and the proof is elementary, but we give it
for the sake of completeness.  Since the right-hand side of \eqref{r-Markov} is
${\mathcal F}_{\La^c}$-measurable in $\om$, it is enough to show
\begin{equation}  \label{r-Markov-fg}
E^\psi[f(\eta)g(\om)] = E^\psi\big[g(\om)E^{P_\La^{\bar\om}}[f] \big]
\end{equation}
for every ${\mathcal F}_{\La}$-measurable function $f$ and
${\mathcal F}_{\La^c}$-measurable function $g$.
The left-hand side of \eqref{r-Markov-fg} is given by
\begin{equation}  \label{eq:3.13}
Z^{-1} \sum_{\eta\in \{0,1\}^\La, \om \in \{0,1\}^{\La^c}}
f(\eta)g(\om) e^{H_N(\eta\cdot\om)},
\end{equation}
while the right-hand side is rewritten as
\begin{equation}  \label{eq:3.14}
Z^{-1} \sum_{\eta\in \{0,1\}^\La, \om \in \{0,1\}^{\La^c}}
g(\om) e^{H_N(\eta\cdot\om)} Z_{\bar\om}^{-1} \sum_{\zeta\in \{0,1\}^\La}
f(\zeta) e^{H_{\bar\La}(\zeta\cdot\bar\om)},
\end{equation}
where $Z$ is a normalization constant (including the factor $\nu^N(\eta\cdot\om)
\equiv 2^{-N}$), and
\begin{align*}
H_N(\eta\cdot\om) :=&
  \sum_{y\in \T_N^d}\la(y/N) (\eta\cdot\om)_y
+ \sum_{y\in \T_N^d} (\partial\la(y/N), \t_y F (\eta\cdot\om)) \\
= & H_{\bar\La}(\eta\cdot\bar\om) + \widetilde H_{\bar\La^c}(\eta\cdot\om),
\end{align*}
with $H_{\bar\La}(\eta\cdot\bar\om)$ defined above and
$$
\widetilde H_{\bar\La^c}(\eta\cdot\om)
= \sum_{y\in \bar\La^c}\la(y/N) (\eta\cdot\om)_y
+ \sum_{y \in \T_N^d:\, \text{supp}\, \t_y F\cap \bar\La^c \not= \emptyset}
(\partial\la(y/N), \t_y F (\eta\cdot\om)).
$$
However, $(\eta\cdot\om)_y =\om_y$ for $y \in \bar\La^c$ in the first sum and 
$\t_y F (\eta\cdot\om) = \t_y F (\om)$ for $y\in\T_N^d$ such that
$\text{supp}\t_y F\cap \bar\La^c 
\not= \emptyset$ in the second sum, since the radius of the support of $F$ is $r$ so that 
$\text{supp}\, \t_y F\cap\La = \emptyset$.
Thus, $\widetilde H_{\bar\La^c}(\eta\cdot\om) \equiv \widetilde H_{\bar\La^c}(\om)$
is a function of $\om$ only.  Therefore, in \eqref{eq:3.14},
$$
\sum_{\eta\in \{0,1\}^\La}e^{H_N(\eta\cdot\om)} 
= Z_{\bar\om} \cdot e^{\widetilde H_{\bar\La^c}(\om)}.
$$
Accordingly, the right-hand side is equal to
$$
Z^{-1} \sum_{\om \in \{0,1\}^{\La^c}} g(\om) \sum_{\zeta\in \{0,1\}^\La}
f(\zeta) e^{H_{\bar\La}(\zeta\cdot\bar\om)} e^{\widetilde H_{\bar\La^c}(\om)},
$$
which coincides with \eqref{eq:3.13}, that is the left-hand side
of \eqref{r-Markov-fg}.  This completes the proof of the lemma.
\end{proof}

\section*{Appendix B \hskip 3mm Proof of Lemma \ref{Theorem 3.3}
(large deviation type upper bound)} 
\label{sec:3.8}

\setcounter{thm}{0}
\renewcommand{\thelem}{B.\arabic{lem}}

\setcounter{equation}{0}
\renewcommand{\theequation}{B.\arabic{equation}}

Lemma \ref{Theorem 3.3} gives a large deviation type upper 
bound under $\psi_{\la(\cdot),F}^N\, d\nu^N$ defined by \eqref{B.8}
and provides an appropriate error estimate.

To prove this lemma,
we first recall some known facts on thermodynamic functions; see
\cite{3} and Appendix B of \cite{FUY} with $J\equiv 0$, $H_\La\equiv 0$
and $R_1=0$.  For a finite domain  $\La \Subset \Z^d$ ($\La \not=\emptyset$),
a chemical potential  $\la \in \R$ and particle
number  $k : 0 \le k \le |\La|$, define grand canonical and canonical partition
functions by
\begin{align}
&  Z_{\La,\la} = \sum_{\eta_\La \in \mathcal{X}_\La}
      \exp \bigg\{ \la \sum_{x\in\La} \eta_x  \bigg\}
      =e^{ p(\la)|\La|},      
         \label{B.1}  \\
&  Z_{\La,k} = \sum_{\eta_\La \in \mathcal{X}_\La :
             \sum_{x\in\La} \eta_x  = k } 1 = 
             \begin{pmatrix} |\La| \\k \end{pmatrix},   \label{B.2} 
\end{align}
respectively, where  $\mathcal{X}_\La = \{ 0,1\}^\La$ and recall \eqref{2.4} 
for $p(\la)$.  Then,
\begin{align}
\log Z_{\La,k} =|\La| q(k/|\La|) + \frac12 \log \frac{|\La|}{2\pi k(|\La|-k)} +o(1),
                        \label{B.4} 
\end{align}
as $|\La|, k \to \infty$; recall \eqref{2.4} for $q(u)$.  
(Note, for $k=0$ or $|\La|$, $Z_{\La,0}=Z_{\La,|\La|}=1$
and $q(0)=q(1)=0$.)  Indeed, by Stirling's formula $n! \sim n^n e^{-n} \sqrt{2\pi n}$
($a_n\sim b_n$ means $a_n/b_n\to 1$ as $n\to\infty$), we have
\begin{align*}
\log Z_{\La,k}  & = \log \big(|\La|!/ k! (|\La|-k)! \big)  \\
&= |\La| \log |\La|- |\La| + \tfrac12 \log 2\pi |\La| 
 - \big\{ k\log k - k + \tfrac12 \log 2\pi k\big\}  \\
& \quad - \big\{ (|\La|-k) \log (|\La|-k) - (|\La|-k) +\tfrac12 \log 2\pi(|\La|-k) \big\} + o(1) \\
&= |\La| \log |\La| - k \log k -(|\La|-k) \log(|\La|-k) +
\tfrac12 \log \frac{|\La|}{2\pi k(|\La|-k)} + o(1) \\
&= |\La| \Big\{ -\frac{k}{|\La|} \log \frac{k}{|\La|} - \big(1-\frac{k}{|\La|}\big) \log (1-\frac{k}{|\La|}) \Big\}
+ \tfrac12 \log \frac{|\La|}{2\pi k(|\La|-k)} + o(1),
\end{align*}
as $|\La|, k \to \infty$, and this shows \eqref{B.4}.

The functions  $p(\la)$  and  $- q(\rho)$  are continuous and convex, and
satisfy
\begin{align}
&  p(\la) = \sup_{\rho \in [0,1]}  \{ q(\rho) + \la\rho\}, \label{B.5} \\
&  q(\rho) = \inf_{\la \in \R}  \{ p(\la) - \la\rho\}. \label{B.6}
\end{align}
For each  $\rho \in [0,1]$, there exists a unique $\la = \bar{\la}(\rho)
\in \bar{\R} := \R \cup \{\pm \infty\}$  such that
\begin{equation}
  p(\la) =  q(\rho) + \la\rho.   \label{B.7}
\end{equation}

Before proceeding to the proof of Lemma \ref{Theorem 3.3}, 
we prepare a similar statement for finite 
volume Bernoulli measures with  constant chemical potentials.
For $\La \Subset \Z^d$ ($\La \not=\emptyset$),  $\la \in \R$,  the finite volume 
(grand canonical) Bernoulli measure
$\nu_{\La,\la}$  on  $\La$, which is a probability measure on $\mathcal{X}_\La$,  is 
defined by
\begin{equation}
\nu_{\La,\la} (\eta) =
  Z_{\La,\la}^{-1}  
      \exp \bigg\{  \la \sum_{x\in\La} \eta_x \bigg\}
      \equiv \bar\nu_\la^{\otimes\La}(\eta),
              \quad  \eta \in \mathcal{X}_\La.    \label{B.9}
\end{equation}

\begin{lem} \label{Proposition B.1} 
 For every  $\la\in \R$ and $G(\rho) =G_1+\frac1K G_2\in C^1([0,1])$,
$$
\ell_*^{-d} \log E^{\nu_{\La(\ell),\la}} [ \exp \{ \ell_*^d 
G(\bar{\eta}^\ell_0)\}]  
= \sup_{\rho \in [0,1]}
    \{ G(\rho) -I(\rho;\la) \} + Q_{\ell}(\la,G),
$$
recall $\La(\ell) = \La_{\ell,0}$ in \eqref{eq:3.1-P} and
\eqref{eq:3.saeta} for $\bar\eta_0^\ell$.
We have an estimate for the error term $Q_{\ell}(\la,G)$,
which is uniform in $\frac1K\in (0,1]$:
\begin{equation}  \label{eq:Q-ell-la}
|Q_{\ell}(\la,G)| \le C \ell^{-d}  (\log \ell + |\la|+ \|G'\|_\infty),
\end{equation}
where $G'\equiv \partial_\rho G$.
\end{lem}

\begin{proof}  By \eqref{B.1}, setting $\bar{\eta}_\La = \frac1{|\La|}  \sum_{x\in \La} \eta_x$ for $\La \subset \Z^d$ in general, we have
\begin{align*}
I:=& \frac1{|\La|} \log E^{\nu_{\La,\la}}
     [ \exp \{|\La|G(\bar{\eta}_\La)\}]   
       \\  
=& \frac1{|\La|} \log  \bigg[ \sum_{\eta\in \mathcal{X}_\La}
      \exp \bigg\{ \la \sum_{x\in\La} \eta_x 
                 + |\La|G(\bar{\eta}_\La)
                       \bigg\} \bigg] - \frac1{|\La|} \log  Z_{\La,\la}  
       \\  
=& \frac1{|\La|} \log  \bigg[ \sum_{m=0}^{|\La|}
       Z_{\La,m}
            \exp \bigg\{ \la m  + |\La|G(m/|\La|) \bigg\}\bigg]
                        - p(\la),
\end{align*}
where $Z_{\La,\la}$ and $Z_{\La,m}$ are defined by \eqref{B.1} 
and \eqref{B.2}, respectively.
Then we apply \eqref{B.4} for $Z_{\La,m}$.  First note that
\begin{align} \label{eq:3.Zlam}
\frac4{|\La|} \le \frac{|\La|}{m(|\La|-m)}  \le 2
\end{align}
for $1\le m \le |\La|-1$, $|\La| \ge 2$.  Therefore, by \eqref{B.4},
$Z_{\La,m} \le e^{|\La| q(m/|\La|) + \frac12 \log \frac2{2\pi}+C_1}$
for $0\le m \le |\La|, |\La|\ge 2$ (including $m=0, |\La|$).  Thus, denoting
$a^* :=  \sup_{\rho \in [0,1]} \{ G(\rho) -I(\rho;\la) \} +p(\la)$ and estimating 
all terms in the sum in $m$ by using $a^*$,
we have an upper bound of $I$:
\begin{align*}
I & \le \frac1{|\La|} \log \Big[ (|\La|+1) \exp \{ |\La| a^* + C_2\}
\Big] -p(\la) \\
& = \frac1{|\La|}  \Big[ \log(|\La|+1) + |\La| a^* + C_2\Big] -p(\la) \\
& \le a^*-p(\la) + \frac{C}{|\La|}  \log |\La|.
\end{align*}
(Note that the constant $C$ is uniform in $G$; indeed, $C=2$ is enough for large $|\La|$.)

Next, we show the lower bound for $I$.  The supremum for $a^*$ is attained 
at some $\rho_*\in [0,1]$.  (Note that $q$ is concave and $q(0)=q(1)=0$, and
$q'(\rho) = \log (1-\rho)/\rho$ so that $q'(0)= +\infty, q'(1)= -\infty$.)
Since $G\in C^1([0,1])$, choosing $m$ such that $\frac{m}{|\La|}$ is close to
$\rho_*$ in the sense that $|\frac{m}{|\La|}-\rho_*|\le \frac1{|\La|}$ and 
$\frac{m}{|\La|}\le \rho_*$ (if $\rho_*\le \frac12$) or 
$\frac{m}{|\La|}\ge \rho_*$ (if $\rho_*> \frac12$), we have
$$
\big| \{ G(\rho_*)-I(\rho_*;\la) \} - \{ G(m/|\La|)-I(m/|\La|;\la) \}\big| \le 
\frac{\|G'\|_\infty + |\la|}{|\La|}
+|q(\rho_*)-q(m/|\La|)|.
$$
Here, $|q(\rho_*)-q(m/|\La|)|$ is estimated as
$$
|q(\rho_*)-q(m/|\La|)| \le \frac{C_3}{|\La|}
$$
for some $C_3=C_3(\e)>0$ if $0<\e\le \rho_*\le 1-\e <1$.
If $0<\rho_*\le \e$, since $\frac{m}{|\La|} \le \rho_*$, we have 
$|q'(\frac{m}{|\La|})| \ge |q'(\rho)|$ for every $\rho \in [m/|\La|, \rho_*]$
and therefore
$$
|q(\rho_*)-q(m/|\La|)| \le \frac{1}{|\La|} \Big|q'\big(\frac{m}{|\La|}\big) \Big| 
= \frac{1}{|\La|} \log \frac{|\La|- m}{m}
\le \frac{1}{|\La|}  \log |\La|.
$$
The case $1>\rho_*\ge 1-\e$ is similar.

Then, taking only the term given by this $m$ in the sum of $I$ and using
the lower bound in \eqref{eq:3.Zlam} for $Z_{\La,m}$, we have
\begin{align*}
I & \ge \frac1{|\La|} \log \Big[ \exp \Big\{ |\La| \big(a^* -
\frac{\|G'\|_\infty+ |\la|+ C_3 + \log |\La|}{|\La|}\big)
+ \frac12 \log \frac4{2\pi|\La|} \Big\}\Big] -p(\la) \\
& = a^* - \frac{\|G'\|_\infty+ |\la|+C_3+\log |\La|}{|\La|} - \frac{\log |\La|-\log \frac2\pi}{2|\La|} -p(\la) \\
& \ge a^*-p(\la) - \frac{C}{|\La|}  (\log |\La|  + |\la|+ \|G'\|_\infty).
\end{align*}
We therefore have the conclusion with the estimate \eqref{eq:Q-ell-la}
by taking $\La = \La(\ell)$.
\end{proof}

We are now ready to give the proof of Lemma \ref{Theorem 3.3}.
As we noted, this theorem is an extension of Theorem 3.3 of \cite{FUY}
and gives a further error estimate.  Because the definition of $\widetilde{G}$
is different, the proof is modified by applying H\"older's inequality.

\begin{proof}[Proof of Lemma \ref{Theorem 3.3}]
To apply Lemma \ref{Proposition B.1}  by localizing in space, finally to
obtain the estimate in terms of a spatial integral and also to match
with the scale $\ell$ in $\widetilde{G}$, we divide $\T_N^d$ into disjoint 
boxes $\{\La_{\ell,a}\}_a$ with 
side length $\ell_* =2\ell+1$ and centered at $a\in \ell_*\T_{N/\ell_*}^d 
= \{ \ell_*, 2\ell_*,\ldots, N\}^d$, assuming $N/\ell_* \in \N$ for simplicity 
(if not, we may make the side lengths of some of the boxes smaller
or larger by $1$).

Since $x\in \T_N^d$ is uniquely decomposed as $x=y+a$ with
$y\in \La(\ell) = \La_{\ell,0}$ and $a\in \ell_*\T_{N/\ell_*}^d$
(we write $a(x)$ for $a$)
according to this division, writing the sum and the product in 
$a \in \ell_*\T_{N/\ell_*}^d$ by $\sum_a^*$ and $\prod_a^*$,
respectively, we have
\begin{align*}
I :=&  E^{\nu_{\la(\cdot),F}^N} [ \exp \widetilde{G}(\eta) ] 
       \\  
= & Z_{\la(\cdot),F,N}^{-1}  E^{\nu^N}\bigg[
    \exp\bigg\{ \sum_{y\in \La(\ell)}\sum_a\!{}^{^*}
  \big[ G(y+a)/N,\bar{\eta}_{y+a}^{\ell}) + \la(a/N) \bar{\eta}_{y+a}^{\ell} \big]
                  + R(\eta) \bigg\} \bigg]  \\
= & Z_{\la(\cdot),F,N}^{-1}  E^{\nu^N}\Bigg[
 \prod_{y\in \La(\ell)}
    \exp\bigg\{ \sum_a\!{}^{^*}
  \big[ G((y+a)/N,\bar{\eta}_{y+a}^{\ell}) + \la(a/N) \bar{\eta}_{y+a}^{\ell} \big]
           \bigg\}\cdot e^{R(\eta)} \Bigg]  \\
\le & e^{\|R\|_\infty}   Z_{\la(\cdot),F,N}^{-1}  \prod_{y\in \La(\ell)}
E^{\nu^N}\Bigg[
    \exp\bigg\{ \ell_*^d \sum_a\!{}^{^*}
  \big[ G((y+a)/N,\bar{\eta}_{y+a}^{\ell}) + \la(a/N) \bar{\eta}_{y+a}^{\ell} \big]
                  \bigg\} \Bigg]^{1/\ell_*^d},
\end{align*}                 
where $R(\eta) = R_1+R_2$ and
\begin{align*}
& R_1 = \frac1{N} \sum_{x\in\T_N^d} \left(\partial\la(x/N), \tau_x F(\eta)
         \right), \\
& R_2 = \sum_{x\in\T_N^d}\big( \la(x/N)\eta_x - \la(a(x)/N)\bar\eta_x^\ell\big).
\end{align*}
For the last line, we have first estimated $e^{R}\le e^{\| R\|_\infty}$ and then
applied H\"older's inequality under $\nu^N$.
Two error terms   $R_1$ and $R_2$ are estimated as follows:
\begin{align}\label{B.12-R} 
 |R_1(\eta)|  \le  N^{d-1} \|\partial\la\|_\infty \| F\|_\infty,
 \quad
 |R_2(\eta)|  \le  \ell_*N^{d-1} \|\partial\la\|_\infty.
\end{align}
In fact, for $R_2$, note that the sum of the second term of $R_2$ can be rewritten as
\begin{align*}
\sum_{x\in\T_N^d} \la(a(x)/N) \ell_*^{-d} \sum_{z\in \La(\ell)} \eta_{x+z}
= \sum_{x'\in\T_N^d} \eta_{x'}
\ell_*^{-d} \sum_{z\in \La(\ell)} \la(a(x'-z)/N).
\end{align*}

Since $\bar{\eta}_{y+a}^{\ell}$ are functions of $\eta\big|_{\La_{\ell,y+a}}$, they
are independent for different $a$ under $\nu^N$.  Therefore, the last 
expectation in the bound for $I$ is factorized and 
is rewritten as 
\begin{align*}
& E^{\nu^N}\Bigg[
    \exp\bigg\{ \ell_*^d \sum_a\!{}^{^*}
  \big[ G((y+a)/N,\bar{\eta}_{y+a}^{\ell}) + \la(a/N) \bar{\eta}_{y+a}^{\ell} \big]
                  \bigg\} \Bigg]          \\
& = \prod_a\!{}^{^*}           
    E^{\nu^N}\Bigg[
    \exp\bigg\{ \ell_*^d 
    \big[ G((y+a)/N,\bar{\eta}_{y+a}^{\ell}) + \la(a/N) \bar{\eta}_{y+a}^{\ell} \big]
                  \bigg\} \Bigg]                \\
& = \prod_a\!{}^{^*} \big(Z_{\La_{\ell, y+a},\la(a/N)}\cdot 2^{-\ell_*^d} \big)
E^{\nu_{\La_{\ell, y+a},\la(a/N)}} \bigg[\exp\big\{ \ell_*^d
  G((y+a)/N,\bar{\eta}_{y+a}^{\ell}) \big\}\bigg].
\end{align*}
In the second line, $\nu^N$ can be replaced by 
$\bar\nu_{1/2}^{\otimes\La_{\ell,y+a}}$
and, recalling \eqref{B.9}, we obtain the third line.
Thus, noting the shift-invariance of $Z_{\La,\la}$ and $\nu_{\La,\la}$ in $\La$,
and $\prod_y \prod_{a}^* 
= \prod_x$, we have
\begin{align*}
I \le e^{\|R\|_\infty}   Z_{\la(\cdot),F,N}^{-1} 
&  \prod_{a}\!{}^{^*} 
Z_{\La(\ell),\la(a/N)} \cdot 2^{-\ell_*^d}
\times \prod_{x\in \T_N^d}
E^{\nu_{\La(\ell),\la(a(x)/N)}} \bigg[\exp\big\{ \ell_*^d
  G(x/N,\bar{\eta}_0^{\ell}) \big\}\bigg]^{1/\ell_*^d},
\end{align*}
and therefore, 
\begin{align}  \label{eq:3.70}
N^{-d}\log I \le & N^{-d} \|R\|_\infty
+ N^{-d} \log   \Big( Z_{\la(\cdot),F,N}^{-1}  \prod_{a}\!{}^{^*} 
Z_{\La(\ell),\la(a/N)} \cdot 2^{-\ell_*^d} \Big) \\
& + N^{-d} \sum_{x\in \T_N^d} \ell_*^{-d} \log
E^{\nu_{\La(\ell),\la(a(x)/N)}} \bigg[\exp\big\{ \ell_*^d
  G(x/N,\bar{\eta}_0^{\ell}) \big\}\bigg].  \notag
\end{align}

For the partition functions, recalling that
\begin{align*}
& Z_{\La,\la} \cdot 2^{-|\La|}= E^{\bar\nu_{1/2}^{\otimes \La}}
\Big[\exp\Big\{ \la \sum_{x\in \La} \eta_x\Big\}\Big], \quad \la\in \R,\\
& Z_{\la(\cdot),F,N} = E^{\nu^N}\Big[\exp\Big\{ \sum_{x\in \T_N^d} 
\la(x/N)\eta_x+R_1(\eta) \Big\} \Big],
\end{align*}
and changing $\La(\ell)$ to $\La_{\ell,a}$ in $Z_{\La(\ell),\la(a/N)}$, we obtain
\begin{align*}
\prod_a\!{}^{^*}
Z_{\La_{\ell,a},\la(a/N)} \cdot 2^{-\ell_*^d}
= Z_{\widetilde\la(\cdot), 0, N},
\end{align*}
where $\widetilde\la(v)= \sum_a^* \la(a/N) 1_{\La_{\ell,a}}(Nv)$
is a step function on $\T^d$ and $F=0$ on the right-hand side.
To replace the step function $\widetilde\la(\cdot)$ with the original function 
$\la(\cdot)$, note that
\begin{align*}
\frac{ Z_{\widetilde\la(\cdot), 0, N}}{ Z_{\la(\cdot), 0, N}}
& = \frac{E^{\nu_N}
[\exp\{ \sum_{x\in \T_N^d} \widetilde\la(x/N)\eta_x\}]}
{E^{\nu_N}
[\exp\{ \sum_{x\in \T_N^d} \la(x/N) \eta_x\}]} \\
& = \frac{E^{\nu_N}
[\exp\{ \sum_{x\in \T_N^d} \la(x/N)\eta_x
+ \sum_{x\in \T_N^d} (\widetilde\la(x/N)-\la(x/N))
\eta_x  \}]}
{E^{\nu_N}
[\exp\{ \sum_{x\in \T_N^d} \la(x/N) \eta_x\}]} \\
&  \le e^{\ell N^{d-1}\|\partial\la\|_\infty}.
\end{align*}
To compare two partition functions with and without $F$, we have
\begin{align*}
\frac{ Z_{\la(\cdot), 0, N}}{ Z_{\la(\cdot), F, N} } \le e^{\| R_1\|_\infty}.
\end{align*}
Summarizing these and noting \eqref{B.12-R}, we have shown for
first two terms in \eqref{eq:3.70} that
\begin{align}  \label{eq:3.E1}
N^{-d}\|R\|_\infty & + 
N^{-d} \log   \Big( Z_{\la(\cdot),F,N}^{-1}  \prod_a\!{}^{^*}
Z_{\La(\ell),\la(a/N)} \cdot 2^{-\ell_*^d} \Big) \\
&\le CN^{-1}\|\partial\la\|_\infty (\ell + \|F\|_\infty).   \notag
\end{align}

For the last term of \eqref{eq:3.70},
one can apply Lemma \ref{Proposition B.1} to see that
\begin{align*}
N^{-d} & \sum_{x\in \T_N^d} \ell_*^{-d} \log
E^{\nu_{\La(\ell),\la(a(x)/N)}} \Big[\exp\big\{ \ell_*^d
  G(x/N,\bar{\eta}_0^{\ell}) \big\}\Big]  \\
& =  N^{-d} \sum_{x\in \T_N^d} \bigg\{ \sup_{\rho \in [0,1]}
    \{ G(x/N, \rho) -I(\rho;\la(a(x)/N)) \} + Q_{\ell}(\la(a(x)/N),G(x/N,\cdot)) \bigg\},
\end{align*}
and
\begin{equation}  \label{eq:3.E2}
|Q_{\ell}(\la(a(x)/N),G(x/N,\cdot))| \le \frac{C}{\ell^d}  (\log \ell + \|\la\|_\infty
+ \|\partial_\rho G_1\|_\infty + \|\partial_\rho G_2\|_\infty).
\end{equation}
Moreover, one can estimate
\begin{align}\label{B.17}  
 N^{-d}   \sum_{x\in\T_N^d}
  \sup_{\rho \in [0,1]} \{ G(x/N,\rho)  -I(\rho; \la(a(x)/N)) \}  
        \le \sup_{\rho: \text{step on }\T^d}
    \mathbb{G}(\rho) + Q_{\ell,N},
\end{align}
where the supremum on the right-hand side is taken over all
step functions $\rho(v)$ on $\T^d$,
\begin{align}  \label{eq:3.72}
& Q_{\ell,N}\equiv Q_{\ell,N}(\la;G) = \frac{2\ell}N \, \Big( \|\partial\la\|_\infty 
 +  \|\partial_v G_1\|_\infty
    + \|\partial_v G_2\|_\infty \Big)
\end{align}
and
$$
\mathbb{G}(\rho) = 
  \int_{\T^d} \{ G(v,\rho(v)) -I(\rho(v); \la(v)) \} \, dv,
$$
for  $\rho \in L^1(\T^d ; [0,1])$.
Indeed, we first replace $G(x/N,\rho)$ in \eqref{B.17}  by $G(a(x)/N,\rho)$
with an error within $\ell N^{-1} (\|\partial_v G_1\|_\infty + 
\|\partial_v G_2\|_\infty)$, and then the supremum is attained at
some $\rho(a(x)/N)$ for each $a=a(x)$.  This defines a step function
$\rho(v) = \sum_a^* \rho(a/N) 1_{\La_{\ell,a}}(Nv)$, and leads to the
functional $\mathbb{G}(\rho)$, but with an discretization also for
$G$ and $\la$ in the variable $v$.  The error of removing these discretization 
in $G$ and $\la$ is estimated by using
\begin{align*}
& \sup_{|v_1-v_2|\le \ell/N} |\la(v_1) - \la(v_2)| \le 
\ell N^{-1}  \|\partial\la\|_\infty, \\ 
& \sup_{|v_1-v_2|\le \ell/N} \sup_{\rho\in [0,1]}
    |G(v_1,\rho) - G(v_2,\rho)| \le \ell N^{-1}  \, \big( \|\partial_v G_1\|_\infty
    + \|\partial_v G_2\|_\infty \big),  \\  
& \bigg|  \int_{\T^d}  p(\la(v))  \, dv 
   - N^{-d} \sum_{x\in \T_N^d} p(\la(a(x)/N)) \bigg| \le \ell N^{-1} 
   \|p'\|_\infty \|\partial\la\|_\infty \le \ell N^{-1} \|\partial\la\|_\infty,
\end{align*}
and obtain \eqref{B.17} with $Q_{\ell,N}$ given by \eqref{eq:3.72}.

Since  $\mathbb{G}(\rho)$  is continuous in
$\rho \in L^1(\T^d ; [0,1])$  and  $C(\T^d ; [0,1])$  is dense
in  $L^1(\T^d ; [0,1])$,  we obtain the conclusion of  Lemma \ref{Theorem 3.3}
from \eqref{eq:3.E1}, \eqref{eq:3.E2}, \eqref{B.17} and \eqref{eq:3.72}.
\end{proof}

Finally, we make a small comment on other application of 
Lemma \ref{Theorem 3.3}.  Theorem \ref{Theorem 1.1} implies 
with the help of Chebyshev's inequality
$$
P\big( |\lan \rho^N(t),\phi\ran -\lan \rho_K(t),\phi\ran |>N^{-p}\big)
\le C \|\phi\|_{H^\a(\T^d)}^2 N^{-\k+2p}, \quad p\in (0,\k/2).
$$

This decay estimate of the probability follows also from Lemma \ref{Theorem 3.3},
Theorem \ref{Corollary 2.1} and the entropy inequality.  Indeed, set
$$
\mathcal{A}_{N,\phi,\la(\cdot),p}
= \big\{ \eta\in \mathcal{X}_N; \,  
|\lan \rho^N,\phi\ran -\lan \bar{\rho}(\la(\cdot)),\phi\ran |>N^{-p}\big\}.
$$
Then, Lemma \ref{Theorem 3.3} shows the moderate deviation type upper bound:
$$
P^{\psi_{\la(\cdot),F_N}^N}(\mathcal{A}_{N,\phi,\la(\cdot),p})
\le C_1 e^{-c N^{d-2p}},
$$
for some $C_1, c>0$, 
taking $F=F_N, \ell=\ell(N) =N^{a_2}$ as in Section \ref{sec:6.2} and
$G(v,\rho) = \phi(v)\{\rho-\bar\rho(\la(v))\}-N^{-p}/2$ in Lemma \ref{Theorem 3.3}.
Therefore, by the entropy inequality and Theorem \ref{Corollary 2.1}:
$H(f_t|\psi_t) \le CN^{d-\k}$, we obtain
\begin{align*}
P^{f_t}(\mathcal{A}_{N,\phi,\la_K(t,\cdot),p})
& \le \frac{\log 2+ H(f_t|\psi_t)}{\log \{1+ 
1/P^{\psi_t}(\mathcal{A}_{N,\phi,\la_K(t,\cdot),p})\}}  \\
& \le \frac{\log 2+ CN^{d-\k}}{\log(1+e^{cN^{d-2p}}/C_1)}
\le C_2 N^{-\k+2p}.
\end{align*}

\section*{Acknowledgements}

The author thanks Chenlin Gu for helpful discussions, especially for his deep 
insights from the quantitative homogenization and its extension to the 
hydrodynamic limit. He also thanks the referee, whose comments were 
quite useful for improving the presentation and providing easier access 
to the results.
This work was supported in part by the 
International Scientists Project of BJNSF, No.\ IS23007.

\end{document}